\numberwithin{equation}{section}
\numberwithin{figure}{section}
\def\P{\mathcal{P}}
\def\E{\mathbb{E}}
\def\R{\mathbb{R}}
\def\Q{\mathbb{Q}}
\def\Z{\mathbb{Z}}
\def\N{\mathbb{N}}
\def\B{\mathbb{B}}
\def\T{\mathcal{T}}
\def\S{\mathcal{S}}
\def\1{\mathds{1}}
\renewcommand\leq{\leqslant}
\renewcommand\geq{\geqslant}
\newcommand\Osc{{\operatorname{Osc}}}
\theoremstyle{plain}
\newtheorem{thm}{Theorem}[section]
\newtheorem{lem}[thm]{Lemma}
\newtheorem{prop}[thm]{Proposition}
\newtheorem{conj}[thm]{Conjecture}
\newtheorem*{claim*}{Claim}
\newcommand{\thmref}[1]{Theorem~\ref{#1}}
\newcommand{\conjref}[1]{Conjecture~\ref{#1}}
\theoremstyle{definition}
\newtheorem{definition}[thm]{Definition}
\newtheorem*{definition*}{Definition}
\newtheorem*{remarks*}{Remarks}
\newtheorem*{remark*}{Remark}
\newtheorem{remark}[thm]{Remark}
\newenvironment{enumerate-math}
{\begin{enumerate}
		\addtolength{\itemsep}{5pt}
		}
	{\end{enumerate}}
\newenvironment{enumerate-text}
{\begin{enumerate}
		\addtolength{\itemsep}{5pt}
		}
	{\end{enumerate}}
\newcommand{\m}[1]{\,\left({\rm mod }\ #1\right)}
\newcommand{\G}{\mathcal{G}}
\newcommand{\U}{\mathcal{U}}
\newcommand{\Np}{{\N\setminus p\N}}
\newcommand{\TV}{{\rm TV}}
\newcommand{\ls}{\lesssim}
\newcommand{\p}[1]{\mathbb{P}{\left[#1\right]}}
\newcommand{\e}[1]{\mathbb{E}{\left[#1\right]}}
\renewcommand{\mod}{\,\, {\rm  mod}\,\,}
\newcommand{\la}{\lambda}
\newcommand{\ep}{\varepsilon}
\renewcommand{\L}{\mathcal{L}}
\newcommand{\Pa}{{\rm Pass}}
\newcommand{\X}{\mathcal{X}}
\newcommand{\al}{\alpha}
\newcommand{\be}{\beta}
\newcommand{\ga}{\gamma}
\newcommand{\Y}{\mathcal{Y}}
\newcommand{\hn}{\lfloor n/2 \rfloor}
\newcommand{\D}{\Delta}
\newcommand{\J}{\mathcal{J}}
\renewcommand{\H}{\mathcal{H}}
\newcommand{\tC}{{\widetilde C}}
\newcommand{\I}{\mathcal{I}}
\newcommand{\K}{\mathcal{K}}
\renewcommand{\ll}{\ls}
\renewcommand{\k}{\kappa}
\begin{document}	
	\title[Generalized Collatz]{Generalized Collatz Maps with Almost Bounded Orbits}
	\author{Felipe Gon\c{c}alves}
	\address{Hausdorff Center for Mathematics, 53115 Bonn, Germany.}
	\email{goncalve@math.uni-bonn.de}
	\author{Rachel Greenfeld}
	\address{UCLA Department of Mathematics, Los Angeles, CA 90095-1555.}
	\email{greenfeld.math@gmail.com}
	\author{Jose Madrid}
	\address{UCLA Department of Mathematics, Los Angeles, CA 90095-1555.}
	\email{jmadrid@math.ucla.edu}
	\begin{abstract}
	\emph{If dividing by $p$ is a mistake, multiply by $q$ and translate, and so you'll live to iterate.} We show that if we define a Collatz-like map in this form then, under suitable conditions on $p$ and $q$, almost all orbits of this map attain almost bounded values. This  generalizes a recent breakthrough result of Tao for the original Collatz map (i.e.,  $p=2$ and $q=3$). In other words, given an arbitrary growth function $N\mapsto f(N)$ we show that almost every orbit of such map with input $N$ eventually attains a value smaller than $f(N)$.
	\end{abstract}
	\maketitle

\section{Introduction}	
The \emph{Collatz conjecture} is one of the most challenging problems in Mathematics. This problem concerns the long time behaviours of iterates of the Collatz map $C:\N \to \N$ defined by: $C(N)=N/2$ if $N$ even, $C(N)=3N+1$ if $N$ is odd. 

\begin{conj}[Collatz Conjecture]
For every $N \geq 1$ there is $k$ such that $C^k(N)=1$.
\end{conj}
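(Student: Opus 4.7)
The plan is to show that the trajectory of every positive integer $N$ under iteration of $C$ eventually enters the cycle $1 \to 4 \to 2 \to 1$. I should state at the outset that this is the full Collatz conjecture and, despite enormous effort since the 1930s, no complete proof is known; what follows is therefore an outline of the natural line of attack together with a frank account of where it breaks down.

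First I would work with the accelerated map $T$ defined on odd integers by $T(N) = (3N+1)/2^{\nu_2(3N+1)}$, since every orbit of $C$ visits infinitely many odd integers and the parity of the iterates is otherwise forced. The heuristic input is that $\nu_2(3N+1)$ behaves like a geometric random variable with mean $2$, so the expected multiplicative increment $\log T(N) - \log N$ equals $\log 3 - 2\log 2 < 0$, and $\log N$ should drift downward to $O(1)$. The first concrete task is to make this rigorous in a pointwise sense: one would try to construct a Lyapunov functional $V : \N \to [0,\infty)$ that strictly decreases on each sufficiently long block of $T$-iterates, uniformly in the starting residue modulo large powers of $2$.

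Second I would attempt to handle the two modes of failure separately. To exclude non-trivial cycles one uses the classical cycle equation together with Baker-type lower bounds for $|a\log 2 - b\log 3|$; combined with direct computation this has already forced any exceptional cycle to be astronomically long, and one would try to push the sieving further. To exclude divergent orbits one would search for a combinatorial obstruction involving joint $2$-adic and $3$-adic data whose finiteness forces boundedness of the orbit.

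The hard part, and the reason the conjecture remains open, is that no one has successfully executed either half. The drift argument is intrinsically statistical, and Tao's theorem is essentially the sharpest form of it, controlling almost every orbit only in a logarithmic-density sense; there is no known mechanism to upgrade this to \emph{every} orbit. No Lyapunov function has ever been produced, and Conway's Turing-completeness results for natural generalizations of $C$ give soft evidence that no such function of elementary form can exist. My honest assessment is that a proof would require a genuinely new invariant linking the $2$-adic and $3$-adic structure of the orbit; the plan above describes only the shape such an argument would need to have, not a viable route to carrying it out.
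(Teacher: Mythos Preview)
Your assessment is correct, and it matches the paper's own stance: this statement is presented in the paper as a \emph{conjecture}, not a theorem, and the paper makes no attempt to prove it. There is no proof in the paper to compare your proposal against. The paper's actual results (Theorem~\ref{thm:main} generalizing Tao, and Theorem~\ref{prop:Korec} generalizing Korec) are precisely the kind of ``almost all, almost bounded'' statements you correctly identify as the current state of the art, and the paper explicitly echoes your point that upgrading from logarithmic-density control to control of every orbit is out of reach.

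Your honest acknowledgment that the outlined plan is not a viable proof is exactly right; there is no gap to diagnose because you have not claimed to fill one.
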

\noindent For example,
\begin{align*}
&{\rm orbit}(7)=[C^k(7)]_{k\geq 0}=[7, 22, 11, 34, 17, 52, 26, 13, 40, 20, 10, 5, 16, 8, 4, 2, 1, 4, 2,\ldots], \\
& {\rm orbit}(42)=[C^k(42)]_{k\geq 0}=[42, 21, 64, 32, 16, 8, 4, 2, 1,4,2,\ldots]
\end{align*}
and $N=31$ takes $106$ iterations to get to $1$. This conjecture was considered by Collatz around\footnote{The origin of the conjecture is obscure, but Collatz studied very similar maps around the 30's (see Lagarias \cite{L85}).} 1932 and has been verified for all $N\leq 10^{20}$. However, the conjecture survived many attempts of several renowned mathematicians, which made Erd\"os famously pronounce:

\begin{center}
``Mathematics is not yet ready for such problems''.

\hfil Paul Erd\"os
\end{center}

Nevertheless, a lot of work has been produced related to the conjecture\footnote{See, for example, the survey \cite{L85} of Lagarias and some of the bibliography in \cite{L10}.}.  For instance, from the statistical perspective, we have the Benford's Law phenomenon studied by Kontorovich and Miller \cite{KM05} and Lagarias and Soundararajan \cite{LS06} (but other statistical models have also been studied).  From the probability perspective, Krasikov and Lagarias \cite{KL03} showed that
\begin{align}\label{ineq:KL}
\#\left\{N\leq x: T_{\min}(N)=1\right\} \gtrsim \, x^{0.841},
\end{align}
where the exponent is currently best.  Terras \cite{Te} showed that almost all $N\geq 1$ satisfy $C_{\min}(N)<N$, where $C_{\min}(N)=\min_{k\geq 1} C^k(N)$, and Korec \cite{K94} improved this result by showing that for every $\ep>0$ we have $C_{\min}(N) < N^{\log_4 3+\ep}$ for almost all $N\geq 1$.

The recent breakthrough of Tao \cite{T} shows for any given function $f:\N\to\N$ that satisfies $\lim_{N\to \infty} f(N)=\infty$ we have that almost every $N\geq 1$ satisfies $T_{\min}(N)<f(N)$. In other words, Tao shows that the set
\[
\left\{N \in \N : T_{\min}(N)>f(N)\right\}
\]
has zero (logarithmic) density.

\subsection{Collatz-like  maps}
In \cite{C72,C87} Conway studies {generalizations of the Collatz map}. Following Conway, we say that $T:\N\to\N$ is a \emph{Collatz-like} map if $T$ has the form $T(N)=a_N N + b_N$, where $a_N,b_N \in \Q$ are periodic, that is, for some $p$ we have $(a_{N+p},b_{N+p})=(a_N,b_N)$ for all $N\in \N$. Note that additional conditions on $(a_N,b_N)$ have to be assume in order to guarantee that $T$ is globally defined and indeed maps integer to integers. Another equivalent definition for $T$ is to say that there is a partition $\{U_m\}_{m=1}^M$ of the residue classes modulo $p$ such that $T(N)=a_m N + b_m$ if $(N \mod p) \in U_m$. Conway defines the FRACTRAN game (as a wordplay with FORTRAN) and studies its relation with register machines. A FRACTRAN game is a no-players game and it consists of a sequence of positive fractions $F=[f_1,f_2,\ldots,f_M]$, where a partial function $T_F$ is generated by the following rule: Given $N\in \N$ let $T_F(N)$ be the first number $f_m N$ such that $f_m N \in \N$. One could additionally ask that $f_M\in \Z$ to guarantee that $T_F$ is defined in all $\N$. In fact, FRACTRAN games and Collatz-like maps with $b_N\equiv 0$ have a one-to-one correspondence. Indeed, write $f_m=A_m/B_m$ (in lowest terms) and let $p=\mathrm{MCM}(B_1,\ldots,B_M)$. Let now $U_1=\{B_1,2B_1,\dots,{p}\}$ and, in general, $U_{m}=\{B_m,2B_m,\ldots,{p}\}\setminus U_{m-1}$. If $T_F$ is globally defined, then it is easy to see that $U_1,\ldots,U_{M}$ is a partition of $\{1,2,\ldots,p\}$ and that $T_F(N)=NA_m/B_m$ iff $(N \mod p) \in U_m$. Thus $T_F$ is Collatz-like with $b_N\equiv 0$. The converse inclusion is also straightforward.

Conway then defines the famous PRIME game
$$P=\bigg[\frac{17}{91},\frac{78}{85},\frac{19}{51},\frac{23}{38},\frac{29}{33},\frac{77}{29},\frac{95}{23},\frac{77}{19},\frac{1}{17},\frac{11}{13},\frac{13}{11},\frac{15}{14},\frac{15}{2},\frac{55}{1}\bigg]$$ and shows that if one extracts all powers of $2$ contained in the orbit of $N=2^2$ under $T_P$ then this sequence \emph{is exactly} $(2^p)_{p \ \text{prime}}$, that is, the PRIME game lists all primes in increasing order.  Conway also shows that FRACTRAN is a true computer language, in the sense that any Turing machine can be simulated by some FRACTRAN game (and vice-versa obviously). Finally, using the connection with FRACTRAN, Conway shows that  the  question whether, given a FRACTRAN game $F$ and $N\in\N$, the orbit $(T_F^k(N))_{k\geq 0}$ contains a power of $2$, is undecidable (equivalent to the Halting Problem). With a little more work, the statement ``contains a power of $2$" can be replaced by ``contains a number whose prime factorization only has prime powers of the first $M$ primes", where $M$ is fixed {\it a priori}. This is to say that, fixed $M$, there is no global computable function $\mathbb{A}$
$$(T,N)\in \{\text{Collatz-like}
\}\times \N \mapsto \mathbb{A}(T,N) \in  \{0,1\}$$ such that  $\mathbb{A}(T,N)=1$ exactly when there exists $k$ with $T^k(N) = p_1^{a_1}...p_M^{a_M}$ for some $a_1,...,a_M\geq 0$, where $p_1,\ldots,p_M$ are the first $M$ primes.

All this is to say that there is no hope in studying Collatz-like maps $T$ in generality and establishing a (computable) condition that says exactly when such map $T$ has only bounded orbits. We must, therefore, focus our attention to particular subfamilies of Collatz-like maps that are amenable to analytical methods (for instance) and for this reason we restrict ourselves to the following ``simple'' class of maps that essentially mimic the original Collatz map.

	

	
	\begin{definition}[A sub-class of Collatz-like maps\footnote{This class of maps has been previously studied Hasse, M\"{o}ller and Heppner \cite{M78,H79}.}]\label{def:CandS}
	For given integers $p,q\geq 2$ and $r:\Z/p\Z\setminus\{0\}\to \Z$ we let
\begin{align*}
C_{p,q,r}(N)=C(N) := \begin{cases} 
\quad N/p & \text{if} \quad N \equiv 0 \m p, \\
{qN+r(j)} & \text{if} \quad N \equiv j \m p.
\end{cases}
\end{align*}
We additionally require that $qj+r(j)\geq 1$ for all $j=1,\ldots,p-1$, so that $C(\N)\subset \N$.
Respectively, we define the  \emph{speed up} (or Syracuse) map, $S_{p,q,r}=S:\Np\to\Np$, by
$$
S(N):=\frac{qN+r(j)}{p^m},
$$
where $j=N\mod p$, $m=\nu_p(qN+r(j))$, $\nu_p$ is the $p$-adic valuation and $\N\setminus p\N$ is the set of natural numbers not divisible by $p$.
\end{definition}

Note that we \emph{do not} require $p$ or $q$ to be prime, however such restriction can be produce nice examples. The original Collatz map is recovered here by taking $p=2$, $q=3$ and $r(1)=1$. 

Some of these maps are in the literature already, for instance, the $Qx+1$ maps (where $Q$ is some odd number)  were studied in \cite{BMS,KL,S1,S2,V}. However these are still instances of Collatz-like maps $T$ where the function $N\mapsto (a_N,b_N)$ is $2$-periodic. To the best of our knowledge, this work is the first to show analytic results for maps in such generality.

\subsection{Main result} 

Conway's undecidability result implies that there are Collatz-like maps for which the question whether all orbits are bounded is undecidable. It follows that in order to solve the Collatz conjecture one must use some special properties of the Collatz map.  The focus of this work is on the problem of determining these special properties.

Despite Conway's result, we managed to generalize 
the recent breakthrough of Tao \cite{T}, on the boundedness of \emph{almost all}  Collatz orbits, to a wide class of Collatz-like maps.

\begin{thm}\label{thm:main}
Let $p,q$ and $r:\Z/p\Z\setminus\{0\}\to \Z$ be given and consider the maps $C$ and $S$ as in Definition \ref{def:CandS}. Assume further that:
\begin{enumerate}
\item[(a)] $p$ and $q$ are coprime; 
\item[(b)] $q<p^{{p}/(p-1)}$; 
\item[(c)] $qj+r(j) \equiv 0 \ ({\rm mod}\ p)$.
\end{enumerate}
Then almost all orbits of $S$ and $C$ attain almost bounded values in the following sense: For every $\delta>0$ there exists $B_\delta>0$ such that
$$
\sum_{N \in [1,x] \setminus p\N \,:\, S_{\min}(N) < B_{\delta}} \frac1N > (1-\delta) \tfrac{p-1}{p}\log x \quad \text{and} \quad \sum_{N \in [1,x] \,:\, C_{\min}(N) < B_{\delta}} \frac1N > (1-\delta) \log x
$$
for $x$ sufficiently large, where $S_{\min}(N)=\min_{k\geq 0} \{S^k(N)\}$ and $C_{\min}(N)=\min_{k\geq 0} \{C^k(N)\}$. Equivalently, for any function $f:\N\to\N$ with $\lim_{N\to\infty} f(N)=\infty$ the sets 
\begin{equation}\label{eq:logbdd}
    \{N \in \Np \,:\, S_{\min}(N)  > f(N)\} \quad \text{and} \quad \{N \in \N \,:\, C_{\min}(N) > f(N)\}
\end{equation}
have zero logarithmic density.
\end{thm}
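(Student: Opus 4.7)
The plan is to adapt Tao's machinery for the original Collatz map to this generalized setting, with hypotheses (a)--(c) providing exactly what is needed at each stage. First I reduce the claim for $C$ to the claim for $S$: by (c), from any $N \in \Np$ the Collatz step $qN + r(j)$ lands in $p\N$, after which $C$ performs consecutive divisions by $p$ until re-entering $\Np$; these intermediate values are of the form $p^k \cdot S(N)$ with $k \geq 1$ and so contribute nothing to the running minimum, giving $C_{\min}(N) = S_{\min}(N)$ for $N \in \Np$. For $N = p^a \bar N$ with $\bar N \in \Np$ one similarly has $C_{\min}(N) = C_{\min}(\bar N)$, so the logarithmic-density claim for $C$ on $\N$ reduces immediately to the one for $S$ on $\Np$.

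The analytic task is to iterate $S$. Writing $j_k = S^k(N_0) \bmod p$ and $m_k = \nu_p(q S^{k-1}(N_0) + r(j_{k-1}))$, the recursion unwinds to
$$
S^n(N_0) \;=\; \frac{q^n N_0 \;+\; \sum_{k=0}^{n-1} q^{n-1-k}\, p^{m_1+\cdots+m_k}\, r(j_k)}{p^{m_1 + \cdots + m_n}}.
$$
Condition (b), $q < p^{p/(p-1)}$, supplies the requisite negative multiplicative drift: the $p$-adic valuation of a uniformly random element of $p\N$ is geometrically distributed with parameter $(p-1)/p$ and mean $p/(p-1)$, so the expected per-step change of $\log N$ is $\log q - \tfrac{p}{p-1}\log p < 0$, forcing $S^n(N) \ll N \cdot (q/p^{p/(p-1)})^n$ in expectation, with the bounded inhomogeneous $r$-contribution negligible at this scale.

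Turning this heuristic into a statement about almost every orbit requires showing that, for $N$ sampled log-uniformly from $[1,x] \setminus p\N$, the joint distribution of $(j_k, m_k)_{k=1}^n$ is close, in total variation on a suitable quotient of $\Z/p^n\Z$ (equivalently, in $1$-Wasserstein distance on the corresponding ``Syracuse offsets''), to the law of the idealized \emph{Syracuse random variable} obtained by declaring the $m_k$ i.i.d.\ geometric with parameter $(p-1)/p$. Hypothesis (a) enters decisively here, since coprimality of $p$ and $q$ makes $q$ invertible in $\Z/p^n\Z$ and so powers Fourier-analytic estimates on characters of $\Z/p^n\Z$ evaluated along the orbit. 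I expect this Fourier step to be the main obstacle: it amounts to adapting Tao's key lemma that the Syracuse offset is within $O(n^{-c})$ of uniform in its first $n$ base-$p$ digits, and the arbitrary integer offsets $r(j)$ introduce extra bookkeeping that must be propagated through the character sums. Granting this equidistribution, the negative drift and a standard stopping-time argument produce, for almost every $N$ in log density, a time $n = O(\log N)$ with $S^n(N) \le N^\alpha$ for some fixed $\alpha < 1$; iterating this $O(\log\log x)$ times drives the orbit below any fixed $B_\delta$ off a set of log density at most $\delta$, which is the desired conclusion.
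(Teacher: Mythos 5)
Your outline follows the same route as the paper (which in turn follows Tao): reduce $C$ to $S$, unwind the iteration to isolate the multiplicative drift $q^n p^{-a_{1,n}}$ governed by hypothesis (b), model the valuations by i.i.d.\ geometric variables, and combine an equidistribution statement with a first-passage argument. The reduction of $C$ to $S$, the iteration formula, and the drift heuristic are all correct and match the paper.

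The genuine gap is in the equidistribution step, which you place on the wrong modulus and whose difficulty you substantially underestimate. The claim that the valuation vector $a^{(n)}(N)$ is close in total variation to $\G(\mu)^{(n)}$ for $N$ log-uniform is the \emph{easy} half: it depends only on $N \bmod p^m$ and is proved by elementary counting of residue classes (\lemref{lem:Vsetsingleton}) together with a Stirling estimate; no Fourier analysis on $\Z/p^n\Z$ is involved, and (a) enters there only to make the residue-class count exact. The hard half, which your proposal does not engage with, concerns the inhomogeneous offset $F_n(a,R)=\sum_{i} q^{n-i}p^{-a_{i,n}}R_i$ viewed modulo $q^n$, i.e.\ the Syracuse random variable $\S_n\in\Z/q^n\Z$. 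What must be proved is not that $\S_n$ is close to uniform in its base-$p$ digits (it is not uniform in any sense), but that its law does not oscillate at scales finer than $q^m$ (\propref{prop:oscillation}), equivalently that $\E[e^{-2\pi i \xi \S_n/q^n}]\ls_A n^{-A}$ for $q\nmid \xi$ (\propref{prop:decaycharacteristic}). Establishing this decay is where essentially all the new work lies: one must first reduce to $\gcd(q,r(1),\dots,r(p-1))=1$ so that some $\xi r(j_0)$ survives modulo $q$; one must then show that the set $\B$ of pairs $(j,l)$ on which the relevant phase $\theta(j,l)$ is small is a disjoint union of well-separated triangles of slope $\log_p q^2$ --- and the separation from the edge requires a genuinely new argument when $q$ is composite --- and finally one needs a quantitative bound for a two-dimensional renewal process avoiding such a triangle configuration (\thmref{thm:renewal}), which is only implicit in Tao's paper and must be extracted and generalized beyond geometric increments. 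None of this is ``extra bookkeeping propagated through the character sums.'' Separately, your closing ``standard stopping-time argument \ldots iterating $O(\log\log x)$ times'' hides the point that after one passage below $N^{\alpha}$ the law of the orbit is no longer logarithmic; this is precisely why the paper needs the total-variation stabilization of the first-passage location (the second estimate in \propref{prop:firstpassage}), and not merely the drift bound.
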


Since $C(N)=S(N/p^{\nu_p(N)})$, it is not hard to show that the result for the $S$ map implies the result for the $C$ map and vice-versa. 

\begin{remark}
One of the major difficulties in generalizing Tao's result was formulating the correct statement for Theorem \ref{thm:main}, i.e., to identify the correct class of ``Collatz-like'' maps to which we could apply various techniques, similar to those introduced in \cite{T}. The second obstacle was to extract the results about the ``triangle-avoiding'' two-dimensional renewal process embedded in Tao's proof (see \cite[Section 7]{T})  and make it modular (in the proper sense), so we could apply it as a black box to our setting. Moreover, we had to overcome the fact that when $q$ is not a prime, triangles of the ``bad'' region (see Steps 1 and 2 in Section \ref{sec:proofmain}) behave a bit differently close to the edge of the region in Theorem \ref{thm:renewal} compared to Tao's original argument.
\end{remark}

We believe that the conditions (a), (b), and (c) in Theorem \ref{thm:main} are in fact sufficient to guarantee that the existence of $B_C>0$ such that $C_{\min}(N) \leq B_C$ for \emph{all} $N\in \N$, as our following conjecture asserts\footnote{A similar conjecture was previously implicitly suggested in \cite{M78,H79}, without the condition that $p$ and $q$ are coprime.}.

\begin{conj}\label{conj:main}
The maps $S$ and $C$ as in Definition \ref{def:CandS} have only finitely many cycles and all its orbits eventually enter in one of these cycles if all conditions in Theorem \ref{thm:main} are satisfied.
\end{conj}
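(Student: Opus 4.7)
The plan splits the conjecture into two separable parts: (I) that every orbit of $S$ is eventually trapped in a bounded region, and (II) that only finitely many cycles exist in that region; together, these imply by pigeonhole that every orbit enters one of finitely many cycles. Since the special case $(p,q)=(2,3)$, $r(1)=1$ reduces to the classical Collatz conjecture, I will outline a program rather than a complete proof, and I will flag the obstruction that no current technique can clear.

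For (II), I would iterate the defining identity $p^{m_i} N_i = q N_{i-1} + r(j_i)$ along a candidate $n$-cycle with residue trajectory $(j_1,\ldots,j_n)$ and $p$-adic valuations $(m_1,\ldots,m_n)$ to obtain
\begin{equation}
(p^M - q^n)\, N_0 \;=\; \sum_{i=1}^{n} r(j_i)\, q^{n-i}\, p^{m_1 + \cdots + m_{i-1}},
\end{equation}
where $M = m_1+\cdots+m_n$. Condition (b), combined with positivity of the right-hand side, forces $p^M > q^n$, so $p^M - q^n \ge 1$; a Baker-style linear-forms-in-logarithms bound, which genuinely uses the coprimality hypothesis (a) and the linear independence of $\log p, \log q$ over $\Q$, yields the much stronger $|p^M - q^n| \gg \max(p^M,q^n)^{1-\varepsilon}$. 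Coupling this with the trivial size bound on the right-hand side (controlled by $\max |r|$ and the $m_i$) and summing over the finitely many residue patterns $(j_1,\ldots,j_n)$ compatible with (c), one should obtain an effective range for $N_0$ once $n$ exceeds a computable threshold; a finite case check then disposes of short cycles. Care is needed when $q$ is composite, since the prime factorization of $q$ interacts nontrivially with $p^M$ and shifts the effective constants in the linear-forms bound.

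For (I), the essential difficulty is that the averaging machinery underlying \thmref{thm:main} and its model in \cite{T} inherently produces zero-density exceptional sets and cannot preclude individual divergent orbits. A natural line of attack is to build a Lyapunov function $\Phi:\Np\to\R_{\ge 0}$, morally a corrected version of $\log N$, that decreases strictly along $S$ outside a compact set. One might take $\Phi(N) = \log N - c\,\psi(N)$, where $\psi$ is a $p$-adic functional predicting the valuations to be seen in the next several steps, so that the drift inequality $\tfrac{p-1}{p}\log q < \log p$ supplied by (b) becomes pointwise rather than on-average. The main obstacle, which is precisely what blocks the Collatz conjecture itself, is that no such pointwise descent mechanism is known: the probabilistic heuristic only forbids \emph{typical} orbits from escaping, while rare orbits that systematically avoid large valuations could, as far as current methods can tell, conspire to send $N$ to infinity. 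A genuinely new input, plausibly an algebraic or rigidity statement for the induced action on $\Z_p$, appears necessary; absent that, the best one can hope for along these lines is the conditional reduction of the conjecture to (I) via part (II).
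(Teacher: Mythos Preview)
This statement is a \emph{conjecture} in the paper, not a theorem: the authors do not prove it and do not claim to. They explicitly present it as an open problem generalizing the Collatz conjecture, offer only numerical evidence in Section~\ref{numerics}, and their main result (\thmref{thm:main}) is the strictly weaker ``almost all orbits attain almost bounded values'' statement. So there is no proof in the paper to compare against; you correctly diagnosed that the special case $(p,q,r)=(2,3,1)$ is the Collatz conjecture itself and that no known technique closes part~(I).

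That said, your sketch for part~(II) is more optimistic than the current state of the art warrants. Baker-type lower bounds on $|p^M-q^n|$ do give $|p^M-q^n|\gg q^{n(1-\varepsilon)}$, but the right-hand side of your cycle identity is itself of size comparable to $q^n$ (or $p^M$) in general, so dividing does \emph{not} force $N_0$ into a bounded range for large $n$. What linear-forms-in-logarithms actually buys, already in the classical $3x+1$ setting, is the exclusion of cycles with restricted combinatorial structure (Steiner's $1$-cycles, Simons--de Weger $k$-cycles for small $k$), not global finiteness of cycles. So even your ``easier half'' is, as stated, an open problem; the reduction you propose does not go through without a further idea controlling the shape of the valuation sequence $(m_1,\ldots,m_n)$.
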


In Section \ref{numerics} we present numerical evidences which support Conjecture \ref{conj:main} and also give examples showing that the converse of the conjecture is actually false. 
However, we were not able to  find maps $C$  such that all orbits seemed to be bounded but $q\geq p^{p/(p-1)}$. Thus, it seems that  condition (b) must be satisfied whenever all orbits are bounded, and so one can regard it as the most crucial condition.

\begin{remark}[The Matthews \& Watts class \cite{MW84}]
The class of maps we consider in Definition \ref{def:CandS} was in fact considered before (in more generality) by Matthews \& Watts in \cite{MW84}, however we follow the survey of Matthews \cite{M10} for details. For $p\geq 2$ and integers $(b_m)_{m=0}^{p-1}$, $(q_m)_{m=0}^{p-1}$ they define the map
\begin{align*}
&C(N)=(q_mN +b_m)/p \ \ \ \text{if } \ (N \text{ mod } p) = m.
\end{align*}
We require that $q_m>0$ and $q_m m  + b_m \in  p\N$ for all $m=0,...,p-1$ so that $C$ is well-defined and $C(\N)\subset \N$, although Matthews \& Watts originally considered these maps over $\Z$ and did not require such conditions. The map $C$ is said to be of relatively prime type if $\gcd(q_m,p)=1$ for all $m$. Observe that our Definition \ref{def:CandS} is (almost) a particular instance of their class (besides diving by $p$ for $ N \text{ mod } p \neq 0$), and it is realized by taking $q_0=1$, $r_0=0$, $q_j=q$ and $b_j=r(j)$ for $j\geq 1$. Based on some numerical evidence, Matthews \& Watts pose the following conjecture (see \cite[Conjecture 3.1]{M10}), which is much stronger and general than our Conjecture \ref{conj:main}, and reads as follows:  \emph{Let $C$ be of relatively prime type. Then the number of cycles is finite and unbounded trajectories $\{C^{k}(N)\}_{k\geq 0}$ are uniformly distributed module $p^a$ for any $a\geq 1$ (in the natural density sense). If $q_0\dots q_{p-1}<p^p$ then all trajectories are bounded, and so eventually enter in a cycle. If $q_0\dots q_{p-1}>p^p$ then almost all trajectories are divergent. }
Note that the equality case $q_0\dots q_{p-1}=p^p$ never occurs due to the primality constraints.

Let $q^{(k)}(N)=q_{m}$ and $b^{(k)}(N)=b_m$ if $C^{(k)}(N) \equiv m \ (\text{mod } p)$. Then it is easy to see that (see \cite[Theorem 2.1 (ii)]{M10})
\begin{align}\label{eq:MWforumla}
\begin{split}
C^{(k)}(N) & =N \frac{q^{(0)}\cdots q^{(k-1)}(N)}{p^k}\prod_{\ell=0}^{k-1} \left(1 - \frac{r^{(\ell)}(N)}{q^{(\ell)}(N)C^{(\ell)}(N)}\right) \\ & 
= N \left[\prod_{m=0}^{p-1} \bigg(\frac{q_m}{p}\bigg)^{\la^{(k)}_m}\right]^k \prod_{\ell=0}^{k-1} \left(1 - \frac{r^{(\ell)}(N)}{q^{(\ell)}(N)C^{(\ell)}(N)}\right)
\end{split}
\end{align}
where $\la^{(k)}_m(N)=$ frequency of $q_m$ in $(q^{(0)}(N),q^{(1)}(N),...,q^{(k-1)}(N))$.
The heuristic here is that the frequencies $\la^{(k)}_m(N)$ are uniformly distributed modulo $p$ and so we expect $\la^{(k)}_m(N)\to \tfrac1p$ as $k\to\infty$. Then, if the second product in \eqref{eq:MWforumla} is $p^{o(k)}$  we  would have
\[
\lim_{k\to\infty} (C^{(k)}(N))^{1/k} =  \frac{\sqrt[p]{q_0\dots q_{p-1}}}{p}
\]
and so we derive the condition  $q_0\dots q_{p-1}<p^p$. Moreover, they show in \cite[Theorem 1]{MW84} that if $\{C^{k}(N)\}_{k\geq 0}$ is unbounded and uniformly distributed modulo $p$ then the limit above holds. 

As a final comment we note that the map $C$ can be easily seen as a transformation in the $p$-adic integers $\Z_p$ and it is not hard to show that it preserves the Haar measure in $\Z_p$. Matthews \& Watts prove (see \cite[Corollary 1]{MW84}) that in fact $C$ is strongly mixing, a fact which was known already for the original Collatz map \cite[Theorem K]{L85}.
\end{remark}

\begin{remark}
We now describe an interesting numerical feature we have found, which we could not find in the literature. Consider the $3X-1$ map: $C(N)=N/2$ if $N$ is even, otherwise $C(N)=(3N-1)/2$ (this is equivalent to the original Collatz map, but over the negative integers). It is conjecture that all cycles of $C$ are $\{[1], [5, 7, 10], [17, 25, 37, 55, 82, 41, \\ 61, 91, 136, 68, 34]\}$, and that every orbit eventually enters in one of these cycles. Then one can ask the question: \emph{Which numbers $N$ contain $1$ in its orbit?} We can ask the same question for the numbers $a=5,7,10,17,\ldots,34$, and more generally for any other integer $a\in \N$. What we found numerically is that if we let $\omega(a)=\cup_{k=0}^\infty C^{-k}(a)$ then it seems that 
$$
\# (\omega(a) \cap [1,x] ) \sim \kappa(a) x,
$$
for some constant $\k(a)\geq 0$. Indeed, we often see that the plot of the first $10^4$ terms of the sequence $\omega(a)$ looks very much like a line already and this is true for most $a$'s less than $10^3$. We also have the estimates
\begin{align*}
& \k(1)\approx 0.33, \ \ \k([5, 7, 10])\approx [0.0, 0.27, 0.055], \\
& \k([17,25,\ldots,34])\approx [0.0, 0.055, \ldots, 0.044]
\end{align*}
and also that $\k(1) \approx \k(5) +\k(7) + \k(10) \approx \k(17)+\k(25)+\ldots +\k(34) \approx 1/3$, that is, no cycle attracts more integers than others. We observed similar phenomena for all other maps in the Matthews \& Watts class we tested.

For instance, for the original Collatz map (and also other maps), one question that would be interesting to explore is whether we indeed have
$$
\p{\exists n \geq 0 \text{ s.t. } a\in S^n(\L_{x,x^\al})} \sim \k(a) x,
$$
where $\L_{x,x^\al}$ is a logarithmic-distributed random variable in $[x,x^\al] \cap \Np$ and $S$ is the Syracuse-map (see Section \ref{sec:stabfirstpass}), as such a  statement is consistent with the above phenomena. Another interesting question is that, if we let $S^\N(N)=[S^{n}(N)]_{n\geq 0}$ be the orbit of $N$, then we can view $S^\N(\L_{x,x^\al})$ as a random variable in $\{0,1\}^\N$ and we can ask if the following limit
$$
\lim_{x\to\infty} S^\N(\L_{x,x^\al})
$$
exists (in the weak*-sense) as a random variable in $\{0,1\}^\N$ and what properties this limiting random variable possess. It may be even possible to use the total variation bound Proposition \ref{prop:firstpassage} to prove this. However, these are questions that we leave for future work.

\end{remark}

\subsection{Logarithmic density vs. natural density}
 The \emph{natural density} (or asymptotic density) of a set $A\subset \N$ is usually defined by the limit
\[
\text{dens}(A)=\lim_{N\to\infty} \frac{\#(A\cap \{1,\ldots,N\})}{N},
\]
whenever it exists, whereas the log-density of $A$ is defined by
\[
\text{logdens}(A)=\lim_{N\to\infty} \frac1{\log N}{\sum_{\substack{M\leq N \\ M\in A}}\frac1M}.
\]
Our main result, Theorem \ref{thm:main}, concerns the \emph{logarithmic density} of the orbits, which is a weaker notion than the more common \emph{natural density}.
Indeed, is easy to show that if $\text{dens}(A)=1$ then $\text{logdens}(A)=1$, but the converse is false. 
It is therefore natural to aim to strengthen  \thmref{thm:main} to a statement about ``almost all'' orbits in the natural density sense, but this seems a very hard task and still is out of reach.  However, a strengthening of Krasikov and Lagarias result \eqref{ineq:KL} seems a natural question to explore in the log-density case and perhaps the techniques of our paper (and Tao's paper \cite{T}) can be adapted to show that under the conditions of Theorem \ref{thm:main} there is $B>0$ such that
$$
\sum_{\substack{M\leq N \\ C_{\min}(N)\leq B}} \frac{1}{M} \gtrsim_\ep (\log N)^{1-\ep}
$$
for every $\ep>0$. We leave this question for future work. Nevertheless, despite being unable to deal with natural density for generic growth functions $N\mapsto f(N)$, we were able to generalize a weaker natural density result of Korec \cite{K94}\footnote{A weaker version of this was previously obtained by Allouche in \cite{A79}} to the class of maps $C$ in Definition \ref{def:CandS} when $f(N)=N^a$.

\begin{thm}\label{prop:Korec}
Let $C$ be as in Definition \ref{def:CandS} and assume all conditions of Theorem \ref{thm:main}. Assume further that $r(j) \geq 0$ for all $j$. Let $\ga=\tfrac{p-1}{p}\log_p q$. Then for every $c>0$ the set 
$$
\left\{N\in \N \, :  \, C_{\min}(N)<N^{\ga+c}\right\}
$$
has natural density $1$.
\end{thm}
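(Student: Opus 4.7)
The plan is to iterate the Syracuse map $S$ for $n\asymp \log_p N$ steps and use concentration of the $p$-adic valuations to conclude that $S^n(N)\leq N^{\gamma+c}$ for a natural-density-one set of $N$. First I would reduce from $C$ to $S$: since $C^{\nu_p(N)}(N)=N/p^{\nu_p(N)}\in\Np$ and afterwards the $C$-orbit consists of the $S$-iterates of $N':=N/p^{\nu_p(N)}$ together with intermediate $p$-divisions (which only decrease the current value), one has $C_{\min}(N)=S_{\min}(N')$ with $N'\leq N$. For every $M\in\Np$ the preimage of $M$ under $N\mapsto N'$ has at most $\log_p X+1$ elements in $[1,X]$; hence it suffices to show that the ``bad'' set $\{M\in\Np\cap[1,X]:S_{\min}(M)\geq M^{\gamma+c}\}$ has cardinality $o(X/\log X)$, which transfers back to natural density zero for the $C$-problem.

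For $N\in\Np$ let $j_i:=S^{i-1}(N)\bmod p$ and $\sigma_i:=\nu_p(qS^{i-1}(N)+r(j_i))\geq 1$ (by condition (c)), and set $\Sigma_n:=\sigma_1+\dots+\sigma_n$. Unwinding the recursion and using $r(j)\geq 0$,
\begin{equation*}
S^n(N)=\frac{q^n N}{p^{\Sigma_n}}+\sum_{i=1}^n\frac{q^{n-i}\,r(j_i)}{p^{\sigma_i+\sigma_{i+1}+\dots+\sigma_n}},
\end{equation*}
with a non-negative error. On the ``typical'' event that the tail sums $\sigma_i+\dots+\sigma_n$ grow at the expected linear rate $(n-i+1)\tfrac{p}{p-1}$, each term of the error is bounded by a geometric series of ratio $q/p^{p/(p-1)}<1$ (using condition (b)), giving total error $O_{p,q,r}(1)$. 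I would then choose $\alpha:=\tfrac{p-1}{p}\cdot\tfrac{1-\gamma-c/2}{1-\gamma}\in(0,\tfrac{p-1}{p})$ and $n=n(N):=\lfloor \alpha\log_p N\rfloor$. Using the identity $\log_p q=\tfrac{p}{p-1}\gamma$, a direct calculation shows that $\Sigma_n\geq n\tfrac{p}{p-1}-\tfrac{c}{3}\log_p N$ implies $q^nN/p^{\Sigma_n}\leq N^{\gamma+5c/6}$, and hence $S^n(N)\leq N^{\gamma+c}$ for $N$ sufficiently large.

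For the concentration, I would exploit the $p$-adic structure: for each admissible tuple $(j_1,\sigma_1,\dots,j_n,\sigma_n)$ with $j_i\in\{1,\dots,p-1\}$ and $\sigma_i\geq 1$, the $N\in\Np$ realizing this tuple in the first $n$ Syracuse steps form a single residue class modulo $p^{\Sigma_n+1}$. Because $\alpha<\tfrac{p-1}{p}$, the typical modulus $p^{\Sigma_n+1}$ is much smaller than $X$, so the distribution of $(\sigma_1,\dots,\sigma_n)$ under uniform sampling of $N\in[1,X]\cap\Np$ is close in total variation to $n$ iid $\mathrm{Geom}((p-1)/p)$ variables. A Chernoff bound then yields $\p{\Sigma_n<\E[\Sigma_n]-\tfrac{c}{3}\log_p X}\leq X^{-\delta}$ for some $\delta=\delta(c,p,q)>0$, so the bad set has size $O(X^{1-\delta})=o(X/\log X)$, completing the reduction. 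The hardest step will be this total-variation control as $n$ grows like $\log X$: since $\E[\Sigma_n]$ is comparable to $\log_p N$, the first $n$ Syracuse digits already exhaust most of the $p$-adic information in $N$, and one must carefully handle the rare tuples with atypically small $\sigma_i$ on which both the iid approximation and the $O(1)$ bound on the error term could fail; a second-moment argument separating typical from atypical compositions should suffice.
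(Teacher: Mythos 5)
Your proposal is correct in outline, and it takes a genuinely different route from the paper's. The paper works with the ``slow'' map $\widetilde C(N)=N/p$ if $p\mid N$ and $\widetilde C(N)=(qN+r(j))/p$ otherwise, iterates it $m\approx \log_p x$ times, and exploits the \emph{exact} combinatorial fact (via \lemref{lem:SRperiodic}) that $N\mapsto(\widetilde C^{\,k}(N)\bmod p)_{k<m}$ is a bijection from any window of $p^m$ consecutive integers onto $\{0,\dots,p-1\}^m$; the relevant statistic is then the number of nonzero residues, which is \emph{exactly} binomial on each window, and the central limit theorem together with the elementary bound $\widetilde C^{\,m}(N)\leq Nq^{\delta m}p^{-m}(1+A/(qm))^{\delta m}$ finishes the proof — no approximate equidistribution is needed. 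You instead iterate the Syracuse map $n\approx\alpha\log_p N$ times and control $\Sigma_n$ by comparison with i.i.d.\ geometrics; this is equivalent in spirit (the valuations encode the runs of zero residues of $\widetilde C$), but it has the technical wrinkle you correctly flag: with your $\alpha$, the modulus $p^{\Sigma_n}$ is typically $X^{1-\ep_0}$ with $\ep_0$ small, so the hypothesis of \propref{prop:heuristic} (uniformity mod $p^m$ with $m\geq(\mu+c')n$ and $\TV$-error $\ls p^{-m}$) fails and that proposition cannot be cited as a black box. Fortunately only the one-sided lower tail is needed: summing $Xp^{-|a|-1}+O(1)$ over the at most $(p-1)^{n+1}\binom{T}{n}$ residue classes realizing tuples with $|a|<T$ (using \lemref{lem:Vsetsingleton}) gives $X\cdot\p{|\G(\mu)^{(n)}|<T}+O(p^{\mu n})=O(X^{1-\delta})$ directly, with no two-sided total-variation statement required; likewise your $O(1)$ bound on the error term can be replaced by the unconditional $O(q^n)=O(N^{\gamma})$, which already suffices. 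In short, the paper's route buys exactness (a bijection plus a binomial/Stirling count), while yours buys reuse of the machinery (the iteration formula \eqref{eq:Siterates} and the residue-class lemma) already built for the main theorem, at the cost of having to redo the equidistribution estimate at a scale where it is only barely valid.
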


\subsection{A two dimensional renewal process} It turns out that (as in Tao \cite{T}) the main key ingredient in the proof of Theorem \ref{thm:main} is the study of a random walk on $\N^2$ with positive increments. We state the result here as it can be of independent interest. Let $\X_1=(\J,\L)$ be a random variable with values in $\N^2$ and consider the random walk
\begin{align}\label{def:renewalproc}
\X_{1,k}=\sum_{j=1}^k \X_j
\end{align}
where $\X_j$ are i.i.d. copies of $\X_1$. Such a random walk is called a (arithmetic) \emph{renewal process}. We define
$$
\omega=\frac{\e{\L}}{\e{\J}},
$$
so the expected slope of the increment in this renewal process is $\omega$. One way of interpreting the process $\X_{1,k}$ is of a tourist wondering around a city, with a tendency of walking in a certain direction given by $\e{\X}$. Suppose that there are some bad neighbourhoods $\B \subset  [n]\times \N$ (where $[n]=\{1,2,\ldots,n\}$) in the city that the tourist would like to avoid during the visit. We then want to bound the total expected time the tourist spends in these bad neighbourhoods. We will assume that the bad set
$\B$ has the following properties: 
\begin{enumerate}
\item[(i)] $\B \subset \sqcup \D(j_\D,l_\D,s_\D)$
is a disjoint union of triangles of the form
\begin{align}\label{def:triangles}
\D(j_\D,l_\D,s_\D) := \{(j,l)\in \N^2 : j \geq j_\D, l\leq l_\D, \eta(j -j_\D) + (l_\D-l) \leq s_\D\},
\end{align}
where $\eta>0$ is some fixed slope independent of the triangles; 
\item[(ii)] Triangles are uniformly separated from each other and from the edge $\{n\}\times \N$, i.e., 
\[
{\rm dist}(\D,\Delta') \gtrsim \log(1/\ep) \quad \text{and} \quad {\rm dist}(\D,\{j=n\})\gtrsim \log(1/\ep)
\]
for any distinct $\D,\D'\subset \B$ (for some small $\ep>0$).
\item[(iii)] We typically exit a triangle from the top, that is, $\omega> \eta$.
\end{enumerate}
We have the following result.

\begin{thm}\label{thm:renewal}
Let $\X_{1,k}$ be defined as in \eqref{def:renewalproc}. Assume $\X$ is not supported in a coset of a proper subgroup of $\Z^2$ and that for some $c>0$
$$
\p{|\X|>t} \ll e^{-ct}
$$
for all $t>0$. Assume also conditions {\rm (i)}, {\rm (ii)} and {\rm (iii)} for the bad set $\B$, for some $\ep>0$. Then for every $A>0$ we have
\[
\e{\#\{k\in [n] : \X_{1,k} \in \B\}} \leq n-A\log n + O_{A,\ep}(1).
\]
In fact, we have the stronger estimate
$$
\e{\exp\left({-\ep \#\{k\in [n] : \X_{1,k} \notin \B\}}\right)} \ls_{A,\ep} n^{-A}
$$
(which implies the previous bound by Jensen's inequality).
\end{thm}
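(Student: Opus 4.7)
The plan is to follow the argument of \cite[Section~7]{T}, repackaged here as a self-contained renewal-avoidance estimate. Decompose the walk via the alternating stopping times $\sigma_0 := 0$, $\tau_i := \inf\{k \geq \sigma_{i-1} : \X_{1,k} \in \B\}$ and $\sigma_i := \inf\{k > \tau_i : \X_{1,k} \notin \B\}$ for $i \geq 1$. Disjointness of the triangles in (i) confines each bad block $[\tau_i, \sigma_i)$ to a single triangle $\Delta_i$, so we may write $G_i := \tau_i - \sigma_{i-1}$ and $B_i := \sigma_i - \tau_i$ for the good- and bad-excursion lengths, and let $T(n)$ denote the number of bad excursions completed by step $n$. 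Since the exponential-moment bound implies the expectation bound by Jensen, it suffices to establish the former.

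The drift hypothesis (iii) produces the key exponential-tail bound on a single bad excursion. Inside a triangle $\Delta(j_\Delta, l_\Delta, s_\Delta)$, consider the auxiliary one-dimensional process $H_k := \L_{1,k} - \eta \J_{1,k}$, whose increments $\L_j - \eta \J_j$ are i.i.d.\ with positive mean $(\omega - \eta)\E\J > 0$ and sub-exponential tails. The walk exits $\Delta$ from the top precisely when $H_k$ crosses the decreasing threshold $l_\Delta - \eta \J_{1,k}$, a positive-drift hitting problem for which a Cram\'er-Chernoff estimate yields
\[
\p{B_i > L_{0,i}/\delta + t \mid \F_{\tau_i}} \leq C_1 \, e^{-\rho_1 t},
\]
where $L_{0,i}$ is the vertical depth of entry into $\Delta_i$, $\delta := \E\L - \eta\E\J > 0$, and $(\F_k)$ is the natural filtration of the walk.

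The separation condition (ii), combined with the sub-exponentiality of $|\X_j|$, yields the complementary lower-tail bound on a good excursion: Chernoff's inequality gives $\p{G_i \leq c_1 \log(1/\ep) \mid \F_{\sigma_{i-1}}} \leq C_2 \, \ep^{c_3}$ and hence $\e{\exp(-\ep G_i) \mid \F_{\sigma_{i-1}}} \leq q$ for some $q = q(\ep) \in (0,1)$ with $q \to 0$ as $\ep \to 0$. By the strong Markov property at each $\sigma_i$ and telescoping over excursions, the total good-step count $G := \#\{k \in [n] : \X_{1,k} \notin \B\}$ satisfies $\e{\exp(-\ep G)} \ls \e{q^{T(n)}}$ up to boundary terms handled by the previous step.

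A two-case analysis closes the proof. If the walk encounters many triangles, $T(n) \geq A \log n / |\log q|$, then $q^{T(n)} \leq n^{-A}$ directly. If instead $T(n)$ is small, the exponential tails from step~2 make it very unlikely that the walk remains confined to few triangles for the whole duration $[n]$, so $G$ is concentrated near its linear-in-$n$ typical value and we obtain the much stronger estimate $\e{\exp(-\ep G)} \leq e^{-\Omega(\ep n)}$. The main obstacle is step~2: ensuring the per-triangle tail is robust to triangles of unbounded vertical extent $s_\Delta$, since a single large triangle could a priori host a long bad excursion and break the naive product bound. The resolution, already implicit in Tao's argument, is to couple the one-dimensional drift estimate for $H_k$ with the two-dimensional budget $\J_{1,n} = O(n)$, showing that abnormally long bad excursions consume $j$-budget and so are incompatible with many further triangle visits.
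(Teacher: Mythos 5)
Your skeleton is the right one in spirit, and you correctly identify unbounded triangle heights as the main obstacle, but two load-bearing steps are missing or break down. First, the per-good-excursion bound $\e{\exp(-\ep G_i)\mid\F_{\sigma_{i-1}}}\leq q(\ep)$ with $q(\ep)\to 0$ is false as stated, because the walk can exit a triangle through its hypotenuse (the increment of $\eta\J_{1,k}-\L_{1,k}$ has negative mean but is not a.s.\ negative, e.g.\ a step with $\J=1$ and $\L$ small pushes toward the hypotenuse) and then re-enter the \emph{same} triangle one step later. The separation hypothesis (ii) only applies to \emph{distinct} triangles, so such a good excursion has length $O(1)$ and contributes a factor $e^{-\ep}$, not $\ep^{c}$; your telescoping $\e{\exp(-\ep G)}\ls\e{q^{T(n)}}$ then fails because $T(n)$ counts these spurious re-entries. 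The paper avoids this by defining its stopping times $\T_i$ in Lemma \ref{lem:manytriangles} to record only first entries into triangles lying \emph{strictly above} the previous one (the condition $l+\L_{1,k}>l_{\D_{i-1}}$), which is irreversible since $\L$ is increasing, and then proves the analogue of your step 3--4 by induction on the number of such genuinely new triangles.

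Second, your final case ($T(n)$ small) is the actual heart of the theorem and is not carried out: conditionally on the walk entering a single triangle with $s_\D\sim n$ near its bottom corner, $G$ is $O(1)$, not linear in $n$, so the claimed concentration $\e{\exp(-\ep G)}\leq e^{-\Omega(\ep n)}$ is wrong on that event. What must be shown is that \emph{deep} entry into a \emph{tall} triangle is improbable, and quantitatively so: the paper's Lemma \ref{lem:largetriangles} gives $\p{E_{k,s'}}\ls A^2k/s'+e^{-cA^2k}$ for encountering a triangle of height $\geq s'$ within $k$ steps of exiting the previous one, which rests on the anti-concentration estimate \eqref{ineq:decayfirsttime} for the walk at the vertical first-passage time $\K_s$ (the exit point is spread over $\approx\sqrt{s}$ values of $j$, while tall triangles are $\gtrsim s'$-separated). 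That estimate is where the hypothesis that $\X$ is not supported in a coset of a proper subgroup of $\Z^2$ enters; your argument never uses it, which is a sign the anti-concentration input is missing. Note also that the paper's global structure is not a forward excursion decomposition at all but a backward plateau argument on the potential $Q_m=\max_{j\geq n-m}(n-j)_+^AQ(j,l)$, which handles arbitrary starting points inside large triangles (Case III) uniformly; if you want to keep your forward decomposition you would still need both of the above ingredients to control the first entry and the few-triangles event.
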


This is result is implicit in Tao's paper (see \cite[Proposition 1.17 \& Eq. (7.8)]{T}) and it is proven only when $\X$ is geometrically distributed.



\subsection{Organization of the paper}
In Section \ref{sec:prelim} we prove Theorem \ref{prop:Korec}. In Sections  \ref{sec:distorbits}, \ref{sec:finescale} and \ref{sec:equiv} we perform a series of reductions for our main result and we finally prove it in Section \ref{sec:proofmain}. In Section \ref{sec:proofrenewal} we prove Theorem \ref{thm:renewal}, which is used in Section \ref{sec:proofmain} as a black box. In Section \ref{numerics} we explore numerically the maps $C$ and  \conjref{conj:main}. 

\subsection{Acknowledgments}
RG was partially supported by the Eric and Wendy Schmidt Postdoctoral Award. We would like to  thank Terry Tao for helpful discussions. We also  thank the Hausdorff Research Institute for Mathematics in Bonn, and the organizers of the trimester program ``Harmonic Analysis and Analytic Number Theory'', during which a significant portion of this work was conducted. We are also grateful to the anonymous referee for several suggestions that improved the exposition of this paper and for  alerting us to the relevant references \cite{M78,H79}.

\section{Preliminaries}\label{sec:prelim}

\subsection{Notation}\label{sec:notation}
We describe here some important information about the notation we will use throughout the paper without further mention.
\begin{itemize}
    \item The letter $c$ will be reserved to represent a generic sufficiently small positive quantity that can change from line to line. 
    \item We write $\N=\{1,2,3,\ldots\}$ for the natural numbers, $\Z=\{\ldots,-1,0,1,\ldots\}$ for the integers and  $\Np$ for the natural numbers not divisible by $p$. 
    \item For a given $x\in \Z$ we use the notation $x \mod N$ to denote the residue class of $x$ as an element of $\Z/N\Z$ and we write $x\equiv y \m{N}$ to indicate that $x-y$ is divisible by $N$. 
    \item For an infinite (or finite) vector $v=(v_1,v_2,\ldots)$ we write $v^{(m,n)}=(v_m,v_2,\ldots,v_n)$ and let $v^{(n)}=v^{(1,n)}$.
    \item We denote partial sums as
$$
v_{m,n}=\sum_{i=m}^n v_i.
$$
We use $|\cdot|$ for the $\ell^1$-norm $|v|=\sum_{i} |v_i|$, and in particular $|v^{(m,n)}|=v_{m,n}$ if $v^{(m,n)}$ has nonnegative coordinates.
\item For the rest of the paper we will set $\mu=p/(p-1)$. 
\item We write $A\ls B$ to mean there is a positive constant $K>0$ such that $A\leq K B$ (a priori, all implicit constants $K$ are allowed to depend on $p,q,r$).
\item We use $A\approx B$ when $A\ls B$ and $B\ls A$. We use $A\ls_{\la_1,\la_2,\ldots} B$ to emphasize the implicit constant $K$ may depend on other parameters $\la_1,\la_2,\ldots$.
 Statements such as $A\ls B \Rightarrow C\ls D$ mean that if $A\leq K B$ for some $K>0$ then there is $K'=K'(K)$ such that $C\leq K' D$. 
 \item Random variables shall be denote by calligraphic notation: $\X,\Y,\G,\P$ etc. If $\X$ is a random variable we always use $\X_j$ to denote i.i.d. copies of $\X$ and we write $\X^{(n)}=(\X_1,\X_2,\ldots, \X_n)$.
 \item If $\X$ and $\Y$ are random variables we use $\X\equiv \Y$ to say that they have the same distribution. We also define their total variation by
\begin{align}\label{eq:TVdef}
\TV[\X,\Y] = \sup_{E\subset \Omega} |\p{\X\in E} - \p{\Y\in E}| \approx_{\text{if } \Omega \text{ is discrete}} \sum_{\omega\in \Omega} |\p{\X=\omega} - \p{\Y=\omega}|.
\end{align}
\item For $\nu>0$ we let $\G(\nu)$ be the geometrical distribution on $\N$ with average $\nu$
$$
\p{\G(\nu)=N}:= \frac{1}{\nu}\left(\frac{\nu-1}{\nu}\right)^{N-1}
$$
and $\P(\nu):=\G(\nu)+ \G(\nu)$ be the Pascal distribution
$$
\p{\P(\nu)=N}=\frac{N-1}{\nu^2}\left(\frac{\nu-1}{\nu}\right)^{N-2}.
$$
\item If $F$ is a finite set we denote by $\U(F)$ the uniform distribution on $F$. 
\item We will use logical connectors $\wedge$ (\emph{and}) and $\vee$ (\emph{or}) when dealing with probabilistic events $E$ and we will use the bar notation $\overline{E}$ to denote the event \emph{not} $E$. 
\end{itemize}

\subsection{Reduction to the case $p\geq q$ and $\gcd(q,r(1),\ldots,r({p-1}))=1$}\label{sec:firstreduction}

Indeed, if $p>q$ then
$C^2(N) \leq \frac{p-1}{p}N+A$, where $A=\max_j |r(j)|$, and we obtain that $C^{2k}(N) < ({(p-1)}/{p})^{k}N + pA$ for all $k$. This shows that $C_{\min}(N)\leq pA$ for all $N\geq 1$.
Moreover, without loss of generality we can assume
\begin{align}\label{eq:rjassumption}
h=\gcd(q,r(1),\ldots,r({p-1}))=1.
\end{align}
Otherwise, if $h>1$, since $\gcd(p,q)=1$, then $h$ is invertible modulo $p$ and thus we can define a function $r^\star:\Z/p\Z\setminus\{0\}\to \Z$ by $r^\star(j ( h^{-1} \mod p)) := r(j)/h$. The new map $C^\star$ with the same $p$ and $q$ now satisfies the conditions of Definition \ref{def:CandS} and  we have $C(hN)=hC^\star(N)$ (and so $C$ and $C^\star$ are topologically conjugated in $h\N$). Noting that $\tfrac{N}B \leq C(N) \leq B N$ for some $B>1$ and all $N\geq 1$, it is then not hard to show that
\[
C_{\min}(N)\approx h C^\star_{\min}(C(Np^{-\nu_p(N)})/h).
\]
We obtain
\begin{align*}
\sum_{L \in [1,x] \,:\, C_{\min}(L) > f(L)} \frac{1}{L} & = \sum_{j\geq 0} p^{-j} \sum_{\substack{N \in [1,xp^{-j}] \,:\, C_{\min}( N) >f(p^jN) \\ \nu_p(N)=0}} \frac{1}{N} \\
& \leq \sum_{j\geq 0} p^{-j} \sum_{\substack{N \in [1,xp^{-j}] \,:\, C^\star_{min}(C(N)/h) \gtrsim f(p^j N) \\ \nu_p(N)=0}} \frac{1}{N} \\
& \ls  \sum_{j\geq 0} p^{-j} \sum_{M\leq Bxp^{-j}/h \,:\,  C^\star_{min}(M) \gtrsim f_j^{\star}(M)}\frac{1}{M},
\end{align*}
where in the last inequality we set $M=C(N)/h$ and $f_j^{\star}(M)=\min\{f(p^jN): C(N)/h=M\}$. Note that since there are at most $p-1$ integers $N$ that satisfy $C(N)=hM$ and these solutions belong to the interval $[hM/B,hMB]$, we conclude $\lim_{M\to\infty} f^{\star}_j(M)=\infty$. Using that $\max(1,\log(x)) \gtrsim \max(1,\log (Bxp^{-j}/h))$,
is not hard to see that we can apply Theorem \ref{thm:main} in conjunction with the dominated convergence theorem to obtain that 
\begin{align*}
& \limsup_{x\to\infty} \frac{1}{\max(1,\log x)}\sum_{L \in [1,x] \,:\, C_{\min}(L) > f(L)} \frac{1}{L} =0.
\end{align*}

\subsection{Proof of Theorem \ref{prop:Korec}}
We will now show that for every $c>0$
\[
\text{dens} \left\{N\in \N \, :  \, C_{\min}(N)<N^{\ga+c}\right\} =1,
\]
where $\ga=\tfrac{p-1}{p}\log_p q$. Note that $q<p^{p/(p-1)}$ if and only if $\ga<1$. Clearly, we can work  instead with the map $\tC(N)=N/p$ if $p$ divides $N$ and $\tC(N)=(qN+r(j))/p$ otherwise, because $\tC_{\min}(N)=C_{\min}(N)$.  Let
$X=\left\{N\in \N \, :  \, \tC_{\min}(N)<N^{\ga+c}\right\}$ for $c>0$,
$ \chi_j(N)=\tC^{j}(N) \mod p$, $\chi^{(k)}(N)=(\chi_{0}(N),\chi_{1}(N),\dots,\chi_{k-1}(N))$ and 
$
U(m,\delta)=\{N\in \N :  \#\{k\leq m-1 : \chi_k(N)\neq 0\} \leq \delta m\}.
$
It follows from Lemma \ref{lem:SRperiodic} that $\chi^{(m)}(M)=\chi^{(m)}(N)$ if and only if $N \equiv M\m{p^m}$, thus the set $\{\chi^{(m)}(N)\}_{N=M}^{p^m+M-1}$ lists all vectors of $\{0,\ldots,p-1\}^m$ for any $M\in \N$. In particular
\[
\#U(m,\delta)\cap [M,p^m+M)=\sum_{k=0}^{\lfloor m\delta \rfloor} {{m}\choose{k}}(p-1)^k,
\]
hence, by the central limit theorem, we have
$\#U(m,\delta)\cap [M,p^m+M)=(1+o_m(1))p^m$ for any $\delta>\frac{p-1}{p}$. The idea here is the following: for a given $x\geq 1$ we select $m=m(x)$ such that $(m-1)^2p^{m-1}<x\leq m^2p^m$ and observe that
\begin{align*}
\{N\in X : mp^m \leq N \leq x\} 
 \supset \bigcup_{j=m+1}^{ \lfloor xp^{-m}\rfloor} \{N\in X : (j-1)p^m \leq N < jp^m\}.
\end{align*}
Select now $\delta$ such that $\frac{p-1}{p}<\delta<\frac{(\ga+c)}{\log_p q}$. We claim that $U(m,\delta) \cap [mp^m,x] \subset X$ if $m$ is sufficiently large. Assuming this claim is true we conclude that 
\[
\#\{N\in X : mp^m \leq N \leq x\} \geq  \sum_{j=m+1}^{ \lfloor xp^{-m}\rfloor}\#U(m,\delta)\cap [(j-1)p^m,jp^m) = (1+o_m(1))x,
\]
and this clearly shows that $X$ has natural density $1$. To prove the claim we use that
$N/p \leq \tC(N) \leq (qN  + A)/p$, where $A=\max_j r(j)$. Assuming that $N\in U(m,\delta) \cap [mp^m,x]$ and letting $a=\#\{k\leq m-1 : \chi_k(N)\neq 0\}\leq \delta m$ we obtain
\begin{align*}
    \tC^{m}(N)=N\prod_{i=0}^{m-1}\frac{\tC(\tC^i(N))}{\tC^{i}(N)}\leq N\left(\frac{qm+A}{pm}\right)^a\left(\frac{1}{p}\right)^{m-a}
    \leq \frac{Nq^{\delta m}}{p^m}\left(1+\frac{A}{qm}\right)^{\delta m},
\end{align*}
where in the first inequality above we have used that $\tC^i(N)\geq N/p^i\geq m$ (since $r(j)\geq 0$) and $\frac{q\tC^i(N)+A}{p\tC^i(N)}\leq \frac{qm+A}{pm}$. The inequality $ \frac{Nq^{\delta m}}{p^m}\left(1+\frac{A}{qm}\right)^{\delta m} < N^{\ga+c}$ now follows by the bounds $mp^m\leq N \leq m^2 p^m$ and the choice of $\delta$. \hfill $\qed$

\section{Distribution of the valuation of the Syracuse orbits}\label{sec:distorbits}
For a given $N\in \N$ we let
$$ a(N):=(a_i(N))_{i\geq 1}=\nu_p(qS^{i-1}(N)+R_{i}(N)))_{i\geq 1}
$$
where 
$$
R(N) := (R_i(N))_{i\geq 1}= (r({S^{i-1}(N) \mod  p}))_{i\geq 1}. 
$$ 
Note $a_i(N)\geq 1$ for all $i$ by the assumption on the function $r$, which guarantees that $qS^{i-1}(N)+R_{i}(N)$ is divisible by $p$ at least once. As in \cite[Heuristic 1.8]{T}, we expect that for a typical large natural number $N$ the entries of the vector $a^{(n)}(N)$, for $n\ls \log N$, behaves like i.i.d. copies of a geometrical distribution over $\N$. Specifically, if $\X$ is a random variable with values in $\Np$ and uniformly distributed $\m {p^m}$ then we expect
\begin{align}\label{ineq:aapprox}
a^{(n)}(\X) \approx \G(\mu)^{(n)} \quad \text{for } n\gg m,
\end{align}
where\footnote{We reserve $\mu$ to always denote $p/(p-1)$.}
\[
\mu:=p/(p-1)
\]
and $\G(\mu)$ is a geometrical random variable taking values in $\N$ and with mean $\mu$, that is,
$$
\p{\G(\mu)=N} = (p-1)p^{-N}.
$$
We make this heuristic exact in Proposition \ref{prop:heuristic}. In fact, this is closely related with the Benford’s Law phenomenon studied in \cite{KM05,LS06}. A straightforward induction computation proves the very important identity
\begin{align}\label{eq:Siterates}
\begin{split}
S^{n}(N)&= q^{n} p^{-a_{1,n}(N)}  N + \sum_{i=1}^n q^{n-i}p^{-a_{i,n}(N)}R_{i}(N)
\\ & = q^{n}  p^{-a_{1,n}(N)} N + F_n(a^{(n)}(N),R^{(n)}(N))
\end{split}
\end{align}
for all $N,n\in \N$, where
\begin{align}\label{eq: Fk formula}
 F_n (a,R) :=\sum_{i=1}^n q^{n-i}p^{-a_{i,n}(N)}R_{i}(N)=\sum_{i=1}^n q^{i-1} p^{-a_{n-i+1,n}}R_{n-i+1} 
\end{align}
for\footnote{With abuse of notation, we will use the letters $a$ and $R$ to stand by generic vectors while the functions $N\mapsto a(N)$ and $N\mapsto R(N)$ will always be written with the argument $N$.} $(a,R)\in \N^n \times \{r(1),\ldots,r(p-1)\}^n$. A key property of $F$ to be used later, is the formula
\begin{align}\label{eq:Fnrecursionformula}
 F_n (a^{(1,n)},R^{(1,n)})=F_{n-i}(a^{(i+1,n)},R^{(i+1,n)})+q^{n-i} p^{-a_{i+1,n}}F_{i}(a^{(i)},R^{(i)})
\end{align}
for $i\leq n$. In particular, by the law of large numbers and formulas \eqref{eq:Siterates} and \eqref{ineq:aapprox}, the following size approximation should typically hold
$$
S^n(\X)\approx q^np^{-|a^{(n)}(\X)|}\X\approx q^np^{-|\G(\mu)^{(n)}|}\X \approx (q\,p^{-p/(p-1)})^n \X.
$$ 
That is, the remainder term in the iteration formula \eqref{eq:Siterates} should not affect the long time growth behaviour of a typical orbit of $S$. This heuristics suggests the critical threshold \[
q<p^{\tfrac{p}{p-1}}
\]
of Conjecture \ref{conj:main}. The following proposition corroborates this heuristic rigorously.

\begin{prop}\label{prop:heuristic}
Let $\X$ be a random variable taking values in $\Np$ and assume 
$$
\TV{\left[\X  \mod {p^m},\U(\Np  \mod {p^m})\right]} \ls p^{-m}
$$
for some $m\in \N$. If $n\in\N$ is such that $m\geq (\mu+c')n$ for some $c'>0$ then there is $c=c(c',p)>0$ such that
\begin{align}\label{ineq:aGeom}
\TV{\left[a^{(n)}(\X),\G(\mu)^{(n)}\right]}\ls  \, p^{-c n}.
\end{align}
\end{prop}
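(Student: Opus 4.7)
The plan is to build up the joint distribution of $(a_1(\X), \dots, a_n(\X))$ one coordinate at a time, exploiting a ``refreshing'' property of the map $S$: the map $\X \mapsto (a_1(\X), S(\X))$ pushes the uniform distribution on $\Np \bmod p^m$ forward to an approximate product of a (truncated) geometric random variable with mean $\mu$ and a uniform distribution on $\Np \bmod p^{m-a_1(\X)}$. Iterating this $n$ times will give the desired total variation estimate, provided the total $p$-adic precision loss $a_{1,n}$ does not exceed $m$, which is ensured with exponentially high probability by the hypothesis $m \geq (\mu+c')n$.

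First I would establish the refreshing step. Fix $j\in\{1,\dots,p-1\}$ and parameterize $N\equiv j \bmod p$ by $N=j+pt$. Condition (c), $qj+r(j)\equiv 0 \bmod p$, yields $qj+r(j)=ps_j$ for some $s_j\in\Z$, so $qN+r(j)=p(s_j+qt)$ and $a_1(N)=1+\nu_p(s_j+qt)$. Since $\gcd(q,p)=1$, the affine map $t\mapsto s_j+qt$ is a bijection of $\Z/p^{m-1}\Z$. Therefore, as $t$ ranges uniformly over $\{0,\dots,p^{m-1}-1\}$, one sees that $a_1(N)=k$ occurs with probability $(p-1)/p^k$ for $1\le k\le m-1$, with the residual mass $p^{-(m-1)}$ absorbed into $\{k\ge m\}$; and on the event $\{a_1(N)=k\}$ the quotient $S(N)=(s_j+qt)/p^{k-1}$ is uniform on $\Np \bmod p^{m-k}$. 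The total variation distance from the full geometric law $\G(\mu)$ is $\ll p^{-m}$, and the hypothesis on $\X$ adds a further $\ll p^{-m}$ error.

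Next, I would run an induction on $n$, coupling the random variable $\mathcal{W}_n:=(a^{(n)}(\X),\, S^n(\X) \bmod p^{m-a_{1,n}(\X)})$ to the reference $\mathcal{Z}_n:=(\G(\mu)^{(n)},\, \mathcal{U}_n)$, where $\mathcal{U}_n$ is drawn, independently of $\G(\mu)^{(n)}$, uniform on $\Np \bmod p^{m-|\G(\mu)^{(n)}|}$. The induction hypothesis, restricted to the event $\{a_{1,n}(\X)\le T\}$ (resp.\ $\{|\G(\mu)^{(n)}|\le T\}$) for a threshold $T<m$ to be chosen, would state that $\TV[\mathcal{W}_n,\mathcal{Z}_n] \ll n\cdot p^{-(m-T)}$ plus the initial $\ll p^{-m}$ error. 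At stage $n+1$, conditioning on $\mathcal{W}_n$ and applying the refreshing step with precision parameter $m-a_{1,n}(\X)\ge m-T$ produces an approximately geometric increment $a_{n+1}(\X)$ with additional total variation cost $\ll p^{-(m-T)}$, while simultaneously re-uniformising $S^{n+1}(\X)$ modulo $p^{m-a_{1,n+1}(\X)}$, completing the induction.

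To conclude, I would set $T=(\mu+c'/2)n$, so that $m-T\ge (c'/2)n$ and each per-step error is $\ll p^{-c'n/2}$, giving a total inductive error of $\ll n\, p^{-c'n/2}$. The probability of the discarded tail event $\{|\G(\mu)^{(n)}|>T\}$ is controlled by a standard Chernoff bound for a sum of $n$ i.i.d.\ geometric variables of mean $\mu$, producing a bound $\ll e^{-C(c')n}$. Summing all contributions yields the desired estimate $\ll p^{-cn}$ for some $c=c(c',p)>0$. The main technical obstacle is the careful bookkeeping of the coupling when $a_{1,k}(\X)$ approaches $m$ and the refreshing loses all its precision; it is precisely to suppress this regime that the strict inequality $c'>0$ in $m\ge(\mu+c')n$ is required, so that the Chernoff tail bound dominates this contribution.
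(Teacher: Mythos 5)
Your proposal is correct, and it reaches the same arithmetic core as the paper by a differently organized route. The paper's proof is a one-shot counting argument: Lemma \ref{lem:Vsetsingleton} identifies the level set $E_n(a,R)=\{N: a^{(n)}(N)=a,\ R^{(n+1)}(N)=R\}$ as a \emph{single} residue class modulo $p^{|a|+1}$, so $\p{a^{(n)}(\X)=a}$ is read off directly from the near-uniformity of $\X \bmod p^m$, with the error summed over $\{|a|<m\}$ and controlled by a Stirling/entropy estimate (Lemma \ref{lem:stirling}); the tail $\{a_{1,n}(\X)\geq m\}$ is handled by the same counting plus Janson's bound on the geometric side. Your one-step ``refreshing'' lemma is exactly the $n=1$ case of Lemma \ref{lem:Vsetsingleton} (the bijectivity of $t\mapsto s_j+qt$ modulo powers of $p$ uses precisely the hypotheses $\gcd(p,q)=1$ and $qj+r(j)\equiv 0 \bmod p$, as in the paper's induction), and iterating it as a Markovian coupling reproduces the joint law of $(a^{(n)}(\X), S^n(\X)\bmod p^{m-a_{1,n}})$; the Markov property holds exactly because each step is a measure-preserving bijection up to a residual class of mass $p^{-(m'-1)}$. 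What your organization buys is a cleaner error ledger, $O(n\,p^{-(m-T)})$ accumulated over $n$ steps rather than the paper's $(p-1)^n p^{-m}\binom{m-1}{n}$ summed over profiles, and it replaces one use of the Stirling lemma by a standard Chernoff bound; what it costs is the stopping-time bookkeeping near $a_{1,k}\approx m$, which you correctly flag, and one point you should make explicit: the tail $\p{a_{1,n}(\X)>T}$ for the \emph{actual} chain is not given a priori but must be transferred from the geometric chain through the coupling (legitimate, since the laws agree up to the accumulated TV error before the threshold is crossed, and the increments are positive). With that spelled out, the argument is complete.
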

\noindent We note that in practice we will take $c'\geq \mu$.
We will need the following lemmas to prove this proposition (which are similar to the ones used by Terras \cite{Te}).

\begin{lem}\label{lem:SRperiodic}
Let $N\in\Np$ and $m=|a^{(k)}(N)|$. Then $R^{(k+1)}(M)=R^{(k+1)}(N)$ and $a^{(k)}(M)=a^{(k)}(N)$ if $M\equiv N\,\m {p^{m+1}}$ .
\end{lem}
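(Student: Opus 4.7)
The plan is to proceed by induction on $k$, leveraging the iteration formula \eqref{eq:Siterates} to track how a congruence $M \equiv N \pmod{p^{m+1}}$ propagates through the Syracuse dynamics. The base case $k=0$ is trivial: $m = 0$, so $M \equiv N \pmod{p}$ immediately gives $R_1(M) = r(M \bmod p) = r(N \bmod p) = R_1(N)$, and there is nothing to check about $a^{(0)}$.

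For the inductive step, suppose the lemma holds for $k-1$, and let $m' = |a^{(k-1)}(N)|$, so $m = m' + a_k(N)$. Since $M \equiv N \pmod{p^{m+1}}$ certainly implies $M \equiv N \pmod{p^{m'+1}}$, the induction hypothesis gives $a^{(k-1)}(M) = a^{(k-1)}(N)$ and $R^{(k)}(M) = R^{(k)}(N)$. Applying \eqref{eq:Siterates} at step $k-1$ to both $N$ and $M$ and subtracting, the remainder terms $F_{k-1}$ cancel because they depend only on $(a^{(k-1)}, R^{(k-1)})$, leaving
\[
S^{k-1}(M) - S^{k-1}(N) \;=\; q^{k-1}\,p^{-m'}\,(M-N).
\]
Writing $M - N = p^{m+1}\,t = p^{m' + a_k(N)+1}\,t$ for some $t \in \Z$, and using $\gcd(p,q)=1$, this forces
\[
S^{k-1}(M) \;\equiv\; S^{k-1}(N) \pmod{p^{\,a_k(N)+1}}.
\]

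From here, two short steps finish the induction. First, multiplying by $q$ and adding $R_k(M) = R_k(N)$ shows $q S^{k-1}(M) + R_k(M) \equiv q S^{k-1}(N) + R_k(N) \pmod{p^{a_k(N)+1}}$; since the right side has $p$-adic valuation exactly $a_k(N)$, so does the left, giving $a_k(M) = a_k(N)$, which extends the $a$-agreement to length $k$. Second, dividing the same congruence by $p^{a_k(N)}$ yields $S^k(M) \equiv S^k(N) \pmod{p}$ (both sides lie in $\Np$ by construction of $S$), so $R_{k+1}(M) = r(S^k(M) \bmod p) = r(S^k(N) \bmod p) = R_{k+1}(N)$.

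I do not foresee a real obstacle here: the whole argument is essentially bookkeeping with $p$-adic valuations, and the one ingredient doing the heavy lifting is the hypothesis $\gcd(p,q) = 1$ (condition (a) of Theorem \ref{thm:main}), which ensures that multiplying by $q$ preserves $p$-adic valuations and lets the congruence on $S^{k-1}$ be lifted to one on $q S^{k-1} + R_k$ with the sharp modulus $p^{a_k(N)+1}$. The only mild subtlety is getting the exponent $m+1$ (rather than $m$) correct in the hypothesis — this is exactly what is needed so that after removing the $p^{a_k(N)}$ factor in defining $S^k$, one retains one surplus factor of $p$ to control $R_{k+1}$.
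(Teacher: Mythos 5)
Your proof is correct and follows essentially the same route as the paper: both arguments inductively track the congruence of the iterates modulo decreasing powers of $p$, using that each division by $p^{a_i}$ costs exactly $a_i$ in the modulus while multiplication by $q$ (coprime to $p$) costs nothing. The only cosmetic difference is that you induct on $k$ and extract the congruence $S^{k-1}(M)\equiv S^{k-1}(N) \pmod{p^{a_k(N)+1}}$ from the closed-form formula \eqref{eq:Siterates}, whereas the paper fixes $k$ and propagates the stronger invariant $S^i(M)\equiv S^i(N)\pmod{p^{a_{i+1,k}(N)+1}}$ step by step through the dynamics.
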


\begin{proof}
Let $m=|a^{(k)}(N)|$. In fact, we shall prove by induction that\footnote{Here $a_{k+1,k}(N):= 0$.}
\begin{align}\label{eq:claimiteratesS}
\begin{cases}&S^i(M)  \equiv  S^i(N)\,  \m {p^{a_{i+1,k}(N)+1}} \\ 
&R_i(M)=R_i(N)\\
&a_i(M)=a_i(N)
\end{cases}
\end{align}
for $i=1,\ldots,k$ if $M\equiv N\,\m {p^{m+1}}$. The base case $i=1$ is simple and we leave the details to the reader. Assume \eqref{eq:claimiteratesS} is true for some $i<k$.  We have
$$
R_{i+1}(M) = r_{S^{i}(M) \mod  p} = r_{S^{i}(N) \mod p} = R_{i+1}(N).
$$
Since $S^{i}(M)=S^{i}(N)+\lambda p^{a_{i+1,k}(N)+1}$ and  $a_{i+1,k}(N)\geq a_{i+1}(N)=\nu_p\bigl(qS^{i}(N)+R_{i+1}(N)\bigr)$, we obtain
$$
a_{i+1}(M)=\nu_p\bigl(qS^{i}(M)+R_{i+1}(M)\bigr) = \nu_p\bigl(qS^{i}(N)+R_i(N)+\lambda p^{a_{i+1,k}(N)+1} \bigr) = a_{i+1}(N)
$$
and
$$
S^{i+1}(M)=p^{-a_{i+1}(M)}\bigl(qS^{i}(M)+R_{i+1}(M)\bigr) =S^{i+1}(N)+q\lambda  p^{a_{i+2,k}(N)+1}.
$$
Finally, for $i=k$ we see that since  $S^{k}(M) \equiv S^k(N)\m  p$ then $R_{k+1}(M)=R_{k+1}(M)$. The lemma is therefore proven.
\end{proof}

\begin{lem}\label{lem:Vsetsingleton}
The set $E_k(a,R)=\{N\in \Np : a^{(k)}(N)=a, R^{(k+1)}(N)=R\}$ is exactly one residue class modulo ${p^{|a|+1}}$ for any given $a\in \N^{k}$ and $R\in \{r(1),\ldots,r(p-1)\}^{k+1}$.
\end{lem}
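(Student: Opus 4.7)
The plan is to prove the lemma by induction on $k$, combining Lemma 3.1 with the iteration formula (3.3) and condition (c) of Theorem 1.1.

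Lemma 3.1 already shows that if $N \in E_k(a, R)$ then every $M$ with $M \equiv N \pmod{p^{|a|+1}}$ also lies in $E_k(a, R)$; thus $E_k(a, R)$ is automatically a union of residue classes modulo $p^{|a|+1}$, and it suffices to show that at most one such class occurs.

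For the inductive step, assume $E_{k-1}(a^{(k-1)}, R^{(k)}) = [N_0]$ is a single residue class modulo $p^{|a^{(k-1)}|+1}$. Any $N \in E_k(a, R)$ lies in this class and can be written $N = N_0 + t\, p^{|a^{(k-1)}|+1}$ for some $t \in \Z_{\geq 0}$. Substituting into (3.3) gives
\[
q\, S^{k-1}(N) + R_k \;=\; (q\, S^{k-1}(N_0) + R_k) + q^k p\, t.
\]
The remaining conditions $\nu_p(q S^{k-1}(N) + R_k) = a_k$ and $S^k(N) \equiv j_{k+1} \pmod p$ (where $r(j_{k+1}) = R_{k+1}$) combine into the single congruence
\[
(q\, S^{k-1}(N_0) + R_k) + q^k p\, t \;\equiv\; j_{k+1}\, p^{a_k} \pmod{p^{a_k+1}}.
\]
Condition (c) ensures $p \mid q\, S^{k-1}(N_0) + R_k$ (since $S^{k-1}(N_0) \equiv j_k \pmod p$ and $R_k = r(j_k)$), so one may divide through by $p$ to get a congruence modulo $p^{a_k}$ on $q^k t$; coprimality of $q$ and $p$ then pins down $t$ uniquely modulo $p^{a_k}$, and hence $N$ modulo $p^{|a^{(k-1)}|+1+a_k} = p^{|a^{(k)}|+1}$. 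The base case $k=1$ is the same computation starting from the residue class of $N$ mod $p$ forced by $R_1$.

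The step I expect to require the most care is the interpretation of the statement when $r$ is not injective: a single value $R_{k+1}$ may then correspond to more than one residue $j_{k+1}$, and the claim as literally written can fail (e.g. for $k=0$, where $E_0(\emptyset, R_1)$ decomposes into one residue class mod $p$ per preimage of $R_1$ under $r$). The cleanest fix is to prove instead the stronger statement in which $R$ is replaced by the residue sequence $(S^{i-1}(N) \mod p)_{i=1}^{k+1}$---the argument above runs verbatim---and the lemma as stated follows by partitioning $E_k(a, R)$ over the compatible residue sequences.
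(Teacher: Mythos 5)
Your proof is correct, and it takes a genuinely different route from the paper's. The paper argues in three separate pieces: containment of $E_k(a,R)$ in a single class is read off from the explicit congruence $q^{k+1}N\equiv -\sum_{i=1}^{k+1}q^{k+1-i}p^{a_{1,i-1}}R_i \m{p^{|a|+1}}$ extracted from the iteration formula \eqref{eq:Siterates}; saturation of that class is \lemref{lem:SRperiodic}; and non-emptiness is then proved by an induction that peels off the \emph{first} coordinate, constructing an $S$-preimage $N=j_1+p\gamma$ of a suitable element of $E_k(a^*,R^*)$ by choosing $\theta,\alpha$ with $p^{a_1-1}\theta\equiv 1 \m{q}$. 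You instead run a single induction peeling off the \emph{last} coordinate: writing $N=N_0+tp^{|a^{(k-1)}|+1}$, the identity $qS^{k-1}(N)+R_k=(qS^{k-1}(N_0)+R_k)+q^kpt$ (which follows from \eqref{eq:Siterates} together with \lemref{lem:SRperiodic}) turns the two new constraints into a single linear congruence $q^kt\equiv\cdots\m{p^{a_k}}$, and its unique solvability — using condition (c) to divide by $p$ and $\gcd(p,q)=1$ to invert $q^k$ — delivers existence and uniqueness of the residue class in one stroke. Since ``$\nu_p(X)=a_k$ and $Xp^{-a_k}\equiv j_{k+1}\m{p}$'' is genuinely equivalent to ``$X\equiv j_{k+1}p^{a_k}\m{p^{a_k+1}}$'', the step is reversible and the argument is complete; it is arguably more economical than the paper's, at the cost of not producing the closed-form congruence for $N$ that the paper records along the way.

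Your caveat about non-injective $r$ is also well taken: as literally stated, with $R$ a tuple of integer values, the conclusion fails when $r$ is not injective (already at $k=0$), and the paper silently adopts exactly your fix — it treats $R=(r_{j_1},\ldots,r_{j_{k+2}})$ as a formal tuple indexed by the residues $j_i$ (see its base case $E_0(r_j)=\{j+\gamma p:\gamma\in\N\}$, and the count of $(p-1)^{n+1}$ tuples $R$ needed in the proof of \propref{prop:heuristic}).
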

\begin{proof}
By the iteration formula \eqref{eq:Siterates} (for $n$ replaced by $k+1$) we see that if $N\in E_{k}(a,R)$  then
$$
N  \equiv  - (q^{-k-1} \mod p^{|a|+1}) \sum_{i=1}^{k+1} q^{k+1-i}p^{a_{1,i-1}}R_{i}\quad  \m {p^{|a|+1}}.
$$
Also, by Lemma \ref{lem:SRperiodic}, if $M\equiv N \m {p^{|a|+1}}$ then $a^{(k)}(M)=a^{(k)}(N)$ and $R^{(k+1)}(M)=R^{(k+1)}(N)$, hence $M\in E_k(a,R)$. We conclude that it is enough to show that $E_k(a,R)$ is non-empty, and we now prove this by induction on $k$. If we let $E_0(a,R)=E_0(R)$ then it is easy to see
$E_0(r_j)=\{j+\gamma p: \gamma\in\N\}$, and therefore the base case of induction can be regarded as $k=0$. Assume the lemma is true for some $k\geq 0$. Let $a\in\N^{k+1}$ and $R=(r_{j_1},\ldots,r_{j_{k+2}})$, and write $R^*=(r_{j_2},\ldots,r_{j_{k+2}})$ and $a^*=(a_2,\ldots,a_{k+1})$. By the induction hypothesis
$$
E_{k}(a^*,R^*)=\{M + \gamma p^{|a^*|+1}: \gamma \in \N\}
$$
for some $M\in \Np$.  Select $\theta,\alpha \in \N$ so that $p^{a_{1}-1}\theta\equiv  1 \m q$ and $\theta (qj_1 +r_{j_1})/p + \alpha q \equiv M \m {p^{|a^*|+1}}$. These selections are possible because $p$ and $q$ are coprime and $qj+r_j$ is divisible by $p$ by definition. Observe now that 
$$
p^{a_1}(\theta (qj_1 +r_{j_1})/p + \alpha q)  =q(j_1+p\gamma)+r_{j_1}
$$
for some $\gamma\in\N$. Therefore, if we let $M'=\theta (qj_1 +r_{j_1})/p + \alpha q$ and $N=j_1+p\gamma$ we see that $M\in E_{k}(a^*,R^*)$ and $S(N)=M'$. We conclude  $N\in E_{k+1}(a,R)$, which completes the proof of  the lemma.
\end{proof}

\begin{lem}\label{lem:stirling}
Let $m,k\in \N$ be such that $m\geq (\mu+c') k$. Then there is $c=c(c',p)>0$ such that
$$
(p-1)^k p^{-m}\binom{m}{k} \ls p^{-c m}.
$$
\end{lem}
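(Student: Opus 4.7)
My plan is to prove this via the standard entropy bound for binomial coefficients, reducing the inequality to a one-variable convex/concave analysis in the ratio $\lambda := k/m$.

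First I would write $\lambda = k/m$ and apply the inequality $\binom{m}{k} \leq \exp(m H(\lambda))$, where $H(\lambda) = -\lambda \log \lambda - (1-\lambda)\log(1-\lambda)$ is the binary entropy (valid for $\lambda \in [0,1]$, with the convention $0 \log 0 = 0$). Substituting yields
\begin{equation*}
(p-1)^k p^{-m}\binom{m}{k} \leq \exp\bigl(m \cdot g(\lambda)\bigr),
\qquad
g(\lambda) := \lambda \log(p-1) - \log p + H(\lambda).
\end{equation*}
Our task is reduced to showing that $g(\lambda) \leq -c \log p$ uniformly on the range $\lambda \in [0, 1/(\mu+c')]$, for some $c = c(c', p) > 0$.

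Next I would analyze $g$ on $[0,1)$. Differentiating gives $g'(\lambda) = \log((p-1)(1-\lambda)/\lambda)$, so $g$ has a unique critical point at $\lambda^\ast := (p-1)/p = 1/\mu$, and since $H$ is concave (while the other terms are linear), $g$ is concave and $\lambda^\ast$ is its unique maximizer. A direct computation (expanding the logs) shows $g(\lambda^\ast) = 0$:
\begin{equation*}
g(\lambda^\ast) = \tfrac{p-1}{p}\log(p-1) - \log p - \tfrac{p-1}{p}\log\tfrac{p-1}{p} - \tfrac{1}{p}\log\tfrac{1}{p} = 0.
\end{equation*}
Hence $g(\lambda) < 0$ for every $\lambda \in [0,1) \setminus \{\lambda^\ast\}$, and $g$ is strictly increasing on $[0,\lambda^\ast]$.

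Finally, because our hypothesis $m \geq (\mu+c')k$ gives $\lambda \leq \lambda_0 := 1/(\mu+c') < 1/\mu = \lambda^\ast$, monotonicity on $[0,\lambda^\ast]$ yields $g(\lambda) \leq g(\lambda_0) < 0$ for every admissible $\lambda$. Setting $c := -g(\lambda_0)/\log p > 0$, which depends only on $c'$ and $p$, we obtain
\begin{equation*}
(p-1)^k p^{-m}\binom{m}{k} \leq \exp\bigl(m\,g(\lambda_0)\bigr) = p^{-cm},
\end{equation*}
as desired. The small-$k$ (or $k = 0$) case is absorbed by continuity of $g$ at $0$ (indeed $g(0) = -\log p$). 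There is no real obstacle here; the only thing to verify carefully is the identity $g(\lambda^\ast) = 0$ and the location of the maximum, both of which are routine. One could alternatively avoid Stirling entirely by observing that $p^{-m}\binom{m}{k}(p-1)^k$ is the probability that a $\mathrm{Bin}(m, 1-1/p)$ variable equals $k$, and invoking a Chernoff/large-deviations bound away from the mean $(p-1)m/p$; this gives the same conclusion with the same rate function.
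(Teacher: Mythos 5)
Your proof is correct and follows essentially the same route as the paper: both reduce the bound to analyzing the rate function $u \mapsto u\log(p-1) - \log p + H(u)$ (the paper's $f$ is just $-g/\log p$), verify it vanishes exactly at $u = 1/\mu = (p-1)/p$, and use monotonicity on $[0, 1/(\mu+c')]$ to get a uniform negative value at the endpoint. The only cosmetic difference is that you invoke the one-sided entropy bound $\binom{m}{k}\le e^{mH(k/m)}$ while the paper uses Stirling's approximation, which yields the same exponential rate up to a harmless polynomial factor.
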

\begin{proof}
Let $k=um$ for $0<u\leq 1/(\mu+c')$ and $H(x)=-x\log x-(1-x)\log (1-x)$ be the entropy function. Stirling's approximation gives
\begin{align*}
(p-1)^k p^{-m}\binom{m}{k} & \approx p^{-(1-u \log(p-1)/\log(p) )m} \frac{m^{m+1/2}}{((1-u )m)^{(1-u)m+1/2} (um)^{um+1/2}} \\
& = p^{-m(1-u\log(p-1)/\log(p) -H(u)/\log(p))}(mu(1-u))^{-1/2}.
\end{align*}
Since the function $f(u)=1-u\log(p-1)/\log(p) -H(u)/\log(p)$ is convex, $f(0)=1$ and $f(1/\mu)=f'(1/\mu)=0$, we conclude that $f(u)$ is decreasing and positive for $u\in (0,1/(\mu+c'))$. We deduce that
$$
p^{-m(1-u\log(p-1)/\log(p) -H(u)/\log(p))}(\mu(1-u))^{-1/2} \ls p^{-f(1/(\mu+c')) m}.
$$
We can then take $c=f(1/(\mu+c'))$ to finish the proof.
\end{proof}

\begin{proof}[Proof or Proposition \ref{prop:heuristic}]
We start first by proving inequality \eqref{ineq:aGeom}. We claim that 
\begin{align}\label{ineq:atail}
\p{a_{1,n}(\X)\geq m}\ls p^{-c m}.
\end{align}
for some $c>0$. Since $\p{a_{1,n}(\X)\geq m}= \sum_{k=1}^{n} \p{a_{1,k-1}(\X)<m\leq a_{1,k}(\X)}$ (letting $a_{1,0}(\X)=0$) and $n\ls m$, it is enough to show that $\p{a_{1,k-1}(\X)<m\leq a_{1,k}(\X)}\ls p^{-c m}$ uniformly for any $k=1,\ldots,n$ (and then replace $c$ by $c/2$ to take care of the linear factor). Explicitly we have
\begin{align}\label{eq:a1kbetweenm}
\p{a_{1,k-1}(\X)<m\leq a_{1,k}(\X)} = \sum_{\substack{a\in \N^{k} \, : \, a_{1,k-1}<m\leq a_{1,k} \\ \ell \in \{1,\ldots,p^m\} \setminus p\N}} \p{a^{(k)}(\X)=a, \, \X\equiv \ell \, \m {p^m}}.
\end{align}
Because $N\mapsto R_i(N)$ attains at most $p-1$ values,  the iteration formula \eqref{eq:Siterates} implies that under the constraint $a^{(k)}(N)=a$ there at most $(p-1)^k$ possible values for $N \m {p^m}$ if $a_{1,k-1}<m\leq a_{1,k}$. We conclude
\begin{align*}
\p{a_{1,k-1}(\X)<m\leq a_{1,k}(\X)} &  \ls \sum_{{a\in \N^{k} \, : \, a_{1,k-1}<m\leq a_{1,k} }} (p-1)^kp^{-m}   \leq (p-1)^kp^{-m}\binom{m}{k}  \ls p^{-c m},
\end{align*}
where we have applied Lemma \ref{lem:stirling} in the last inequality. This proves the claim \eqref{ineq:atail}.  Janson's bound \cite{J} states that
\begin{align}\label{ineq:jansontail}
\p{|\G(\mu)^{(n)}|\geq t n \mu} \leq e^{-n(t-1-\log t)}
\end{align}
for any $t\geq 1$. Since $m\geq (\mu+c')n$, we can use this bound for $t=(\mu+c')/\mu$ to obtain
\begin{align}\label{ineq:geomtail}
\p{|\G(\mu)^{(n)}|\geq m} \leq p^{-c n},
\end{align}
for some $c>0$. We obtain
\begin{align}\label{ineq:TVfirstestimate}
\begin{split}
\TV[\G(\mu)^{(n)},a^{(n)}(\X)] & = \sum_{a\in \N^n \,:\, |a|<m} \bigl|(p-1)^n p^{-|a|}-\p{a^{(n)}(\X)=a}\bigr| + O(p^{-c n}),
\end{split}
\end{align}
where the big-$O$ above comes from the sum over $\{a\in \N^n \,:\, |a|\geq m\}$ and the tail bounds \eqref{ineq:atail} and \eqref{ineq:geomtail}.  Applying now Lemma \ref{lem:Vsetsingleton} we conclude
\begin{align*}
\p{a^{(n)}(\X)=a}
& = \sum_{R \, \in \{r(1),\ldots,r(p-1)\}^{n+1}} \p{\X \in E_n(a,R)}\\
& = \sum_{R\,\in \{r(1),\ldots,r(p-1)\}^{n+1}} \p{ \left(\X \mod p^{|a|+1}\right) \in \left(E_n(a,R) \mod p^{|a|+1}\right)} \\
& = \sum_{R\,\in \{r(1),\ldots,r(p-1)\}^{n+1}} p^{-|a|}/(p-1) + O(p^{-m}) \\
& = (p-1)^n p^{-|a|} + O(p^{-m}(p-1)^{n})
\end{align*}
Finally we deduce that
\begin{align*}
\TV[\G(\mu)^{(n)},a^{(n)}(\X)] & \ls p^{-c n} + \sum_{a\in \N^n \,:\, |a|<m} p^{-m}(p-1)^{n}\\
& = p^{-c n} +  p^{-m}(p-1)^{n}\binom{m-1}{n}\\
& \ls p^{-c n},
\end{align*}
where we again have used Lemma \ref{lem:stirling} in the last inequality.
\end{proof}

\subsection{Stabilisation of first passage}\label{sec:stabfirstpass}

For integers $x < y$ we let
$
\L_{x,y}
$
be a random variable with logarithm distribution and supported on $\Np \, \cap [x,y)$, that is,
$$
\p{\L_{x,y} = N} = \frac{1}N\bigg(\sum_{M\in \Np \, \cap [x,y)}\frac{1}M\bigg)^{-1}
$$
for all $N\in \Np \, \cap [x,y)$.
Define the first time passage by
$$
T_{x}(N):=\inf\{n\geq 0 : S^{n}(N)\leq x\},
$$
with $\inf\{\emptyset\}=\infty$. Note that if $T_x(N)=n$ then $T_x(S^{n-m}(N))=m$, but conversely, if $T_x(S^{n-m}(N))=m$ and $S^k(N)>x$ for $0\leq k\leq n-m$ then $T_x(N)=n$. We will use this property later on. We also define the first passage by \[
\Pa_x(N):=S^{T_{x}(N)}(N),
\]
with $S^\infty(N)=1$.

	\begin{prop}\label{prop:firstpassage}
There is $c>0$ such that, if $\al,\be\in(1,1+c)$,  then we have
\begin{align}\label{eq:passage time estimate}
\p{T_{x}(\L_{x^\be,x^{\al\be}})=+\infty}\lesssim x^{-c}
\end{align}
and
\begin{align}\label{eq:first passage location estimate}
\TV{\left[\Pa_x(\L_{x^\be,x^{\al\be}}),\Pa_x(\L_{x^{\al\be},x^{\al^2\be}})\right]}\lesssim \log^{-c}x,
\end{align}
for all $x\geq 2$
\end{prop}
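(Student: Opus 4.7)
Both estimates will be reduced, via Proposition \ref{prop:heuristic}, to questions about the i.i.d.\ geometric model, for which Janson's bound \eqref{ineq:jansontail} provides sharp concentration. For the passage time estimate \eqref{eq:passage time estimate}, the plan is to fix $n_0 = \lceil C_0 \log x \rceil$ with $C_0$ a large constant, and show that $S^{n_0}(\L_{x^\beta,x^{\alpha\beta}}) \leq x$ with probability $1 - O(x^{-c})$. Choosing $C_0$ small compared to $\beta$ allows Proposition \ref{prop:heuristic} to couple $a^{(n_0)}(\L_{x^\beta,x^{\alpha\beta}})$ with $\G(\mu)^{(n_0)}$ up to TV error $O(x^{-c_1})$. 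Janson's bound then forces $a_{1,n_0} \geq (\mu - \delta)n_0$ with probability $1 - O(e^{-c_2 n_0})$, and by the iteration formula \eqref{eq:Siterates},
\begin{align*}
S^{n_0}(\L_{x^\beta,x^{\alpha\beta}}) \leq (q p^{-\mu + \delta})^{n_0}\, x^{\alpha\beta} + |F_{n_0}|.
\end{align*}
Since $q < p^\mu$, the formula \eqref{eq: Fk formula} together with typical lower bounds on $a_{i,n_0}$ (via Janson's bound applied on each tail) makes $|F_{n_0}|$ uniformly $O(1)$ on this good event. Taking $\delta$ small and $C_0$ large enough forces the right-hand side below $x$, hence $T_x \leq n_0 < \infty$ with the desired probability.

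For the first passage location \eqref{eq:first passage location estimate}, the plan is to combine the Markov property of $S$ with a stabilisation estimate for the logarithmic distribution. Set $m = \lfloor (\alpha-1)\beta \log x / \log(p^\mu/q) \rfloor$, so that in expectation $S^m$ shifts log-scale down by $(\alpha-1)\beta \log x$, mapping $[x^{\alpha\beta}, x^{\alpha^2\beta}]$ onto $[x^\beta, x^{\alpha\beta}]$. The aim is to prove
\begin{align*}
\TV\bigl[S^m(\L_{x^{\alpha\beta},x^{\alpha^2\beta}}),\ \L_{x^\beta,x^{\alpha\beta}}\bigr] \ls \log^{-c} x,
\end{align*}
which, combined with the Markov property of $S$ and estimate \eqref{eq:passage time estimate} (to guarantee that the orbit of $\L_{x^{\alpha\beta},x^{\alpha^2\beta}}$ has not yet dropped below $x$ by time $m$), yields \eqref{eq:first passage location estimate}. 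To establish the stabilisation, take logarithms in \eqref{eq:Siterates}: $\log S^m(N) = \log N + m\log q - (\log p)|a^{(m)}(N)| + O(1)$, the error being negligible for $N \geq x^\beta$. By Proposition \ref{prop:heuristic}, $|a^{(m)}(\L_{x^{\alpha\beta},x^{\alpha^2\beta}})|$ is within $O(x^{-c})$ in TV of $|\G(\mu)^{(m)}| \equiv \P(\mu)^{(m)}$, which has mean $m\mu$ and standard deviation $\Theta(\sqrt{m}) = \Theta(\sqrt{\log x})$. The logarithmic distribution on $[x^{\alpha\beta}, x^{\alpha^2\beta}]$ is uniform in log-scale with total log-width $\Theta(\log x)$; convolving (in log-scale) with a centered fluctuation of scale $\sqrt{\log x}$ perturbs the uniform density in TV by $O(1/\sqrt{\log x})$, producing the desired bound with $c = 1/2$.

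The main obstacle is precisely this stabilisation step. The fluctuations of $|a^{(m)}|$ are correlated with $N$ through the residue class $N \bmod p^{|a^{(m)}|+1}$ (cf.\ Lemma \ref{lem:Vsetsingleton}), so a clean argument will require conditioning on $a^{(m)}$, using that the conditional support of $N$ is a single arithmetic progression, and then weighting by the $1/N$ factor from the logarithmic distribution to verify the claimed TV bound. One must also check that the boundary contributions, coming from those $N$ near $x^{\alpha\beta}$ or $x^{\alpha^2\beta}$ whose iterates leave the target interval $[x^\beta, x^{\alpha\beta}]$, are $O(\log^{-c} x)$ and do not dominate; these are genuinely $\Theta(\sqrt{\log x}/\log x)$ in size, matching the advertised rate.
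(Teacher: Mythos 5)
Your treatment of \eqref{eq:passage time estimate} is essentially the paper's: couple $a^{(n_0)}(\L_{x^\be,x^{\al\be}})$ with $\G(\mu)^{(n_0)}$ via Proposition \ref{prop:heuristic}, use the Chernoff/Janson tail to force $|a^{(n_0)}|$ above $\gamma n_0$ for some $\log_p q<\gamma<\mu$, and feed this into the iteration formula \eqref{eq:Siterates}. That part is sound.

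The second half has a genuine gap: the stabilisation estimate $\TV\bigl[S^m(\L_{x^{\al\be},x^{\al^2\be}}),\,\L_{x^\be,x^{\al\be}}\bigr]\ls\log^{-c}x$ that you make the centerpiece is \emph{false}, and not merely hard to prove. Total variation sees the exact values, not just the log-scale location, and the image of $S^m$ carries rigid arithmetic structure modulo powers of $q$: by \eqref{eq:Siterates}, $S^m(N)\equiv F_m(a^{(m)}(N),R^{(m)}(N))\ \m{q^m}$, so $S^m(\L')\bmod q^m$ is distributed (approximately) like the Syracuse random variable $\S_m$ of \eqref{eq:syracRVproj}, which is very far from uniform on $\Z/q^m\Z$. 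Already in the classical case $p=2$, $q=3$, one has $S(N)\equiv 2^{-a}\ \m 3$, so $S^m(N)$ is never divisible by $3$, whereas $\L_{x^\be,x^{\al\be}}$ is divisible by $3$ with probability about $1/3$; hence the TV distance is bounded below by an absolute constant for every $m$. Your proposed fix (conditioning on $a^{(m)}$ and reweighting by $1/N$) does not remove this obstruction, because the mismatch is in the law of the residue class of the \emph{output}, not in the correlation between $|a^{(m)}|$ and $N$. This is precisely why your argument never invokes Proposition \ref{prop:oscillation}, which is the paper's essential input here.

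The paper's route avoids comparing $S^m(\L')$ to $\L$ altogether. It shows instead that for each event $E$ one has $\p{\Pa_x(\L_{x^\be,x^{\al\be}})\in E}=Q_{x,\al,E}+O(\log^{-c}x)$ with $Q_{x,\al,E}$ independent of $\be$; replacing $\be$ by $\al\be$ then gives \eqref{eq:first passage location estimate}. The passage probability is decomposed over the stopping time $n=T_x(\L)\in I_y$, rewritten (via Lemma \ref{lem:Vsetsingleton} and \eqref{eq:logasymp}) as a weighted sum over tuples $(a,R,M)$ with $M\equiv F_{n-m_0}(a,R)\ \m{q^{n-m_0}}$, and recognised as $\E[C_n(\S_{n-m_0})]$ for a bounded function $C_n$ on $\Z/q^{n-m_0}\Z$. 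Proposition \ref{prop:oscillation} — which asserts only that the law of $\S_{n-m_0}$ is close to its own coarse-graining at scale $q^{m_0}$, \emph{not} that it is close to uniform — then collapses this to $q^{m_0}\sum_{M\in E'}\tfrac1M\,\p{\S_{m_0}=M\ \m{q^{m_0}}}$, which no longer depends on $n$ or $\be$. Some version of this fine-scale mixing input is unavoidable, and your proposal is missing it.
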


\begin{proof}[{\bf Proof that Proposition \ref{prop:firstpassage} $\Rightarrow$ Theorem \ref{thm:main}}]
Replacing $\beta=\alpha,\alpha^2$, an argument similar to the one presented in \cite[Section 3]{T} can be used to show that Theorem \ref{thm:main} is a consequence of Proposition \ref{prop:firstpassage}. We leave the details to the reader.
\end{proof}

	\section{Fine scale mixing}\label{sec:finescale}
	We model the remainder term in the iteration formula \eqref{eq:Siterates} by the $\Z_q$-valued random variable
	\begin{equation}\label{eq:syracRV}
	\S = \sum_{i=1}^\infty q^{i-1} p^{-\G_{1,i}}\U_i
	\end{equation}
	where the convergence of the sums above is to be taken in the $q$-adic $\Z_q$, while $(\G_1,\G_2,\ldots)$ and $(\U_1,\U_2,\ldots)$ are vectors with i.i.d. copies of $\G(\mu)$ and $\U({r_1,\ldots,r_{p-1}})$ respectively. We then define its projections by 
	\begin{equation}\label{eq:syracRVproj}
	\S_n := \S \mod q^n \equiv \sum_{i=1}^n q^{i-1} p^{-\G_{1,i}}\U_i \mod q^n.
	\end{equation}
Observe now that by \eqref{eq: Fk formula} we have that 
\begin{align}\label{eq:Snidentities}
\S_n \equiv F_n(\widetilde\G^{(n)},\widetilde\U^{(n)}) \mod q^n \equiv F_n(\G^{(n)},\U^{(n)})  \mod q^n
\end{align}
where $\widetilde\G^{(n)}$ is $\G^{(n)}$ in reverse order and similarly $\widetilde\U^{(n)}$.
		\begin{prop}\label{prop:oscillation}
	For any  integers $1\leq m<n$ and $A>0$ we have
	\begin{align}\label{eq: osc}
	\Osc_{m,n}(\p{S_n=N})_{N\in \Z/q^n\Z}&:=\sum_{N\in \Z/q^n \Z}\left| \p{\S_n=N} - \frac1{q^{n-m}}\sum_{M\in \Z/q^n \Z \,:\, M=N \mod q^m} \p{\S_n=M}\right|\nonumber\\ &\ls_A m^{-A}.
	\end{align} 
 \end{prop}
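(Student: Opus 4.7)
The proof proceeds by Fourier analysis on $\Z/q^n\Z$. Writing $f(N) = \p{\S_n = N}$ and $Pf$ for the coset-averaging operator in the definition of $\Osc_{m,n}$, a direct character computation shows that $P$ is the Fourier multiplier with symbol $\mathbf{1}_{\{q^{n-m}\mid\xi\}}$, i.e.\ $\widehat{Pf}(\xi) = \hat f(\xi)\,\mathbf{1}_{\{q^{n-m}\mid\xi\}}$. Hence by Plancherel and Cauchy--Schwarz,
\[
\Osc_{m,n}^2 \leq q^n \|f - Pf\|_{\ell^2}^2 = \sum_{\xi \in \Z/q^n\Z \,:\, q^{n-m}\nmid \xi} |\hat f(\xi)|^2.
\]
Every nonzero $\xi$ factors uniquely as $\xi = q^{n-K}\xi'$ with $\xi' \in (\Z/q^K\Z)^*$ and $K = n - \nu_q(\xi)$; since $\S_n \bmod q^K$ is distributed as $\S_K$, we have $\hat f(\xi) = \hat f_K(\xi')$, where $\hat f_K$ is the characteristic function of $\S_K$ on $\Z/q^K\Z$. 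The condition $q^{n-m}\nmid \xi$ becomes $K > m$, and the sum collapses to
\[
\Osc_{m,n}^2 \leq \sum_{K=m+1}^{n} C_K, \qquad C_K := \sum_{\xi' \in (\Z/q^K\Z)^*}|\hat f_K(\xi')|^2.
\]

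The core task is therefore to prove $C_K \ls_A K^{-A}$ for every $A > 0$; summing over $K > m$ and taking square roots then yields the proposition after readjusting $A$. This rests on the explicit factorization
\[
\hat f_K(\xi') = \E_\G\!\left[\prod_{i=1}^K \phi\!\left(\frac{\xi' \alpha^{\G_{1,i}}}{q^{K-i+1}}\right)\right],
\]
where $\alpha \equiv p^{-1} \pmod{q^K}$ and $\phi(t) = (p-1)^{-1}\sum_{j=1}^{p-1} e^{2\pi i t\, r(j)}$ is the characteristic function of $\U_i$; the identity follows by conditioning on the $\G$'s and using that the $\U_i$ are i.i.d.\ and independent of the $\G$'s. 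The reduction in Subsection~\ref{sec:firstreduction} ensures $\gcd(q, r(1),\ldots, r(p-1)) = 1$, which in turn yields $|\phi(c/q^k)| < 1$ strictly whenever $c$ is a unit modulo $q^k$.

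The main obstacle is converting the per-factor estimate $|\phi(c/q^k)|^2 \leq 1 - c_0 q^{-2k}$ (whose decay is too weak on its own) into polynomial-in-$K$ decay for $\hat f_K(\xi')$. Following the multi-scale strategy of Tao \cite[Section 4]{T}, I will partition $\{1,\ldots,K\}$ into $O(\log K)$ dyadic blocks according to the $q$-adic scale $q^{K-i+1}$, and within each block use the tail bound \eqref{ineq:jansontail} for the geometric partial sums $\G_{1,i}$ to show that the random shifts $\alpha^{\G_{1,i}}$ equidistribute rapidly inside the subgroup $\langle\alpha\rangle\subseteq(\Z/q^k\Z)^*$. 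Combined with the strict contraction $|\phi(c/q^k)| < 1$, this will force the expected absolute value of each block's contribution to be bounded by some uniform constant $1 - c < 1$; chaining the blocks then produces $(1-c)^{O(\log K)} \ls_A K^{-A}$. The principal new difficulty compared to Tao's original setting (where $q = 3$ is prime) is that we allow $q$ to be composite, so $(\Z/q^k\Z)^*$ has a richer subgroup structure and $\langle\alpha\rangle$ may be proper; here we rely on $\gcd(p,q) = 1$ together with $\gcd(q, r(1),\ldots, r(p-1))=1$ to ensure enough cancellation remains in the character sums defining $\phi$.
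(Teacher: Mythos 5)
Your Fourier-analytic setup is sound and in fact coincides with the paper's own first move (Cauchy--Schwarz followed by Plancherel, with the coset-averaging operator recognized as the multiplier $\mathbf{1}_{\{q^{n-m}\mid\xi\}}$). The decisive gap comes right after. You reduce the proposition to $C_K=\sum_{\xi'}|\hat f_K(\xi')|^2\ls_A K^{-A}$, but the only estimate your plan produces is the \emph{pointwise} bound $|\hat f_K(\xi')|\ls_A K^{-A}$ (this is exactly Proposition~\ref{prop:decaycharacteristic}). Since the sum runs over $\approx q^K$ frequencies, the pointwise bound only gives $C_K\ls_A q^K K^{-2A}$, which is exponentially too large; and no pointwise bound of polynomial strength can close this. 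The paper's proof supplies the missing second ingredient: it conditions on the first index $k$ at which $\G_{1,k}$ crosses $n\log_p q-C_A^3\log n$ and on the value $M=\G_{1,k+1}$, which splits $\S_n\equiv \S_{k+1}+q^{k+1}p^{-M}\S'_{n-k-1}$ into \emph{independent} summands. The characteristic-function decay is applied only to the factor coming from $\S'_{n-k-1}$, while the frequency sum over the other factor is controlled by Plancherel together with an $\ell^2$ anticoncentration bound, $q^n\sum_N\p{\S_{k+1}=N\wedge\cdots}^2\ls q^n p^{-M}\approx n^{O_A(1)}$, which rests on the injectivity of $F_{k+1}(\cdot,R)$ and a $q$-adic separation lemma showing the relevant residues $F_{k+1}(a,R)\bmod q^n$ are distinct. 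Nothing in your proposal plays the role of this anticoncentration/entropy input, and without it the frequency summation cannot be carried out.

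Two further points. First, for composite $q$ a residue $\xi'$ not divisible by $q$ need not lie in $(\Z/q^K\Z)^*$; the correct hypothesis (and the one under which Proposition~\ref{prop:decaycharacteristic} is stated) is $q\nmid\xi'$, and your factorization $\xi=q^{n-K}\xi'$ should be phrased accordingly. Second, and more seriously for the sketched part: the claimed strict inequality $|\phi(c/q^k)|<1$ is false when $p=2$, i.e.\ in the original Collatz case, because then $\U_i$ is deterministic ($\U_i=r(1)$ a.s.) and $|\phi|\equiv 1$; the condition $\gcd(q,r(1),\ldots,r(p-1))=1$ does not by itself force $\phi$ to contract. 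This is precisely why both Tao and the present paper group consecutive terms into pairs $p^{\G_{2j}}\U_{2j-1}+q\U_{2j}$ before extracting cancellation: the oscillation must be harvested from the randomness of the geometric exponents $\G$, not only from the $\U$'s. (The paper's actual route to the pointwise decay then passes through the triangle-avoiding two-dimensional renewal process of Theorem~\ref{thm:renewal}, which is a substantially more delicate mechanism than the block-equidistribution heuristic you outline.)
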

 
\noindent This proposition can be interpreted in the following way: When we consider the vector $V=(\p{\S_n=N})_{N\in \Z/q^n \Z} \in \Q^{q^n}$, then its $\ell^1$-distance to the projection of $V$ into subspace of vectors of $\Q^{q^n}$ which are $q^m$-periodic is very small, in other words, the vector $V$ does not oscillate too much in finer scales. Also note that since $
\S_m = \S_n \mod q^m$ we have
\begin{align}\label{eq:syracmoduloidentity}
\p{\S_m=N \mod q^m} = \sum_{M\in \Z/q^n \Z \,:\, M=N \mod q^m} \p{\S_n=M}.
\end{align}

\begin{proof}[{\bf Proof that Proposition \ref{prop:oscillation} $\Rightarrow$ Proposition \ref{prop:firstpassage}}]
$\nonumber$
Throughout the whole proof we let $x\geq 2$ and $n_0\in\N$ be such that $x^{\al} \approx p^{6\mu n_0}$, that is, 
\[
n_0 = \lfloor \frac{\al}{6\mu\log p} \log x \rfloor.
\]
We highlight that several claims made below will be true once $x$ is sufficiently large and so we will avoid repeating this phrase constantly.

\noindent {\bf Step 1.}
A direct computation using the asymptotic formula 
\begin{align}\label{eq:logasymp}
\sum_{N\in \Np \, \cap [x,y)} \frac1N =(1+O(1/x)) \frac1{\mu} \log(y/x)
\end{align}
shows that for all $M\in \Z/p^k\Z$ not divisible by $p$ we have
\[
\p{\L_{x^\al,x^{\al\be}} \mod p^k=M}=(p-1)p^{-k}+O(p^{-2k})
\]
whenever $p^{3k} \ls x^{\al\be}$. We conclude that 
$$
\TV{\left[\L_{x^\be,x^{\al\be}}  \mod {p^{k_0}},\U(\Np  \mod {p^{k_0}})\right]} \ls p^{-k_0}
$$
with $k_0=\lfloor 2\mu n_0 \be\rfloor$. We can then apply Proposition \ref{prop:heuristic} to get
\begin{equation}\label{ineq:tvaandgeom}
\TV{\left[a^{(n)}(\L_{x^\be,x^{\al\be}}),\G(\mu)^{n}\right]}\ls  \, p^{-c n},
\end{equation}
for any $n\leq n_0$. Fix $\gamma:=\tfrac12{\log_p q}+\tfrac12\mu$ such that ${\log_p q}<\gamma <\mu$. We obtain
\begin{align}\label{ineq:taila}
\begin{split}
\p{|a^{(n_0)}(\L_{x^\be,x^{\al\be}})|\leq \gamma n_0}
&\leq \p{|\G(\mu)^{n_0}|\leq \gamma n_0}+O(p^{-c n_0})\\
& \leq \p{|n_0\mu-|\G(\mu)^{n_0}||\geq (\mu-\gamma) n_0}+O(p^{-c n_0})\\
&\lesssim p^{- c n_0}
\\ & \ls x^{-c}
\end{split}
\end{align}
where above we applied \cite[Lemma 2.2]{T} to obtain the following Chernoff type tail bound
\begin{align}\label{ineq:chernofftail}
\p{||\G(\mu)^{(n)}|-n\mu|\geq t} & \ls e^{-ct^2/n}+e^{-ct} \quad (t>0).
\end{align}
From the iteration formula \eqref{eq:Siterates}, the event $|a^{(n_0)}(\L_{x^\be,x^{\al\be}})|>\gamma n_0$ implies that
\begin{align*}
S^{n}(\L_{x^\be,x^{\al\be}}) \lesssim q^{n_0}p^{-|a^{(n_0)}(\L_{x^\be,x^{\al\be}})|}x^{\al\be}+ q^{n_0} \ls x^{\frac{\al({\log_p q}-\gamma)}{6\mu}+\be} + x^{\frac{\al{\log_p q}}{6\mu}} \ls x^{1-c},
\end{align*}
if $\be,\al>1$ are sufficiently close to $1$. We conclude that the event $|a^{(n_0)}(\L_{x^\be,x^{\al\be}})|>\gamma n_0$ implies that $S^{n}(\L_{x^\be,x^{\al\be}}) \leq x$  for $x$ sufficiently large, and in this case we have $T_x(\L_{x^\be,x^{\al\be}})\leq n<+\infty$. We conclude that
$$
\p{T_{x}(\L_{x^\be,x^{\al\be}})=+\infty}\leq \p{|a^{(n_0)}(\L_{x^\be,x^{\al\be}})|\leq \gamma n_0}  \lesssim x^{-c}.
$$
\medskip

\noindent {\bf Step 2.}
In order to establish \eqref{eq:first passage location estimate} it is enough to prove that 
\begin{equation}\label{eq: pass goal}
    \p{\Pa_x(\L_{x^\be,x^{\al\be}})\in E}=Q_{x,\al,E}+O(\log^{-c}x)
\end{equation}
for any $E\subset [1,x] \cap \Np$, where $Q_{x,\al,E}$ is some quantity independent of $\be$ (as long as $\al,\be>1$ are sufficiently close to $1$). Indeed, we can then replace $\be$ by $\al\be$ and in this way derive \eqref{eq:first passage location estimate}. Let $\L=\L_{x^\be,x^{\al\be}}$ and
\begin{equation}\label{a-mu n<log 0.6 x}
A^{k}=\{a\in \N^k : |a_{1,n}-\mu  n|<\log^{0.5 + \ep} x \ \text{for all} \ n \ \text{with} \ 0\leq n\leq k\}.
\end{equation}
By \eqref{ineq:tvaandgeom} and the tail bound \eqref{ineq:chernofftail}  we get
\begin{equation}\label{eq: error for not in A}
\p{a^{(n_0)}(\L)\notin A^{n_0}}=\p{\G(\mu)^{n_0}\notin A^{n_0}} + O(p^{-cn_0})  \ls \log^{-c} x.
\end{equation}
and therefore
\begin{align}\label{ineq:decaynotinA}
\p{{\Pa}_{x}(\L)\in E}=\p{{\Pa}_{x}(\L)\in E  \wedge a^{(n_0)}(\L)\in A^{n_0}}+O(\log^{-c}x).
\end{align}
Assume now that $a^{(n_0)}(\L)\in A^{n_0}$. Then for all $n$ with $0\leq n\leq n_0$ we have
\begin{align}\label{ineq:Sestimate2}
\begin{split}
S^n(\L) =q^{n}p^{-a_{1,n}(\L)}\L+O(q^{n_0}).& =(1+O(x^{-c}))q^{n}p^{-a_{1,n}(\L)} \L  \\ & = e^{O(\log^{0.6}x)}\left(\frac{q}{p^\mu}\right)^n\L,
\end{split}
\end{align}
which implies that
\begin{align}\label{ineq:Sestimate3}
e^{-\log^{0.7} x} \left(\frac{q}{p^\mu}\right)^{n-m} S^m(\L) \leq S^n(\L)  \leq e^{\log^{0.7} x} \left(\frac{q}{p^\mu}\right)^{n-m} S^m(\L)
\end{align}
for all $0\leq m,n\leq n_0$. Observe that since $q/p^\mu<1$ we have $e^{O(\log^{0.6}x)}\left(\frac{q}{p^\mu}\right)^{n_0}\L \leq x$ (for large $x$) and so $T_x(\L)\leq n_0$.
From the definition of $T_x(\L)$ we deduce
\[
\exp\left(O(\log^{0.6}x)\right)\left(\frac{q}{p^\mu}\right)^{T_x(\L)}\L \leq x < \exp\left(O(\log^{0.6}x)\right)\left(\frac{q}{p^\mu}\right)^{T_x(\L)-1}\L,
\]
or, equivalently,
\begin{equation*}
T_x(\L)=\frac{\log(\L/x)}{\log(p^{\mu}/q)}+O(\log^{0.6}x).
\end{equation*}
We then consider the interval
$$
I_y=\left[\frac{\log(y/x)}{\log\frac{p^{\mu}}{q}}-\log^{0.7}x,\frac{\log(y^{\alpha}/x)}{\log\frac{p^{\mu}}{q}}+\log^{0.7}x\right] \cap (\Np)
$$
with $y=x^\be$ and note that
\begin{equation}\label{eq: size of I_y}
\#I_y=(1+O(\log^{-c}x))\frac{\alpha-1}{\mu \log \frac{p^{\mu}}{q}}\log y.
\end{equation}
One can show that $
\p{T_{x}(\L)\in I_y \wedge a^{(n_0)}(\L)\in A^{n_0}}=1-O(\log^{-c}x)$,
which implies
\begin{align*}
\p{{\Pa}_{x}(\L)\in E} & =\p{T_{x}(\L)\in I_y \wedge {\Pa}_{x}(\L)\in E\wedge a^{(n_0)}(\L)\in A^{n_0}}+O(\log^{-c}x)
\\ & = \sum_{n\in I_y} \p{B_n} + +O(\log^{-c}x)
\end{align*}
where for a given $n\in I_y$ we define the event
$$
B_n:=[T_x(\L)=n \wedge \Pa_{x}(\L)\in E\wedge a^{(n_0)}(\L)\in A^{n_0}].
$$
Let
\begin{align}\label{eq: definition of m_0}
m_0:=\lfloor{\frac{\alpha-1}{100\mu \log p}\log x\rfloor}
\end{align}
and note $I_y\subset [2m_0,n_0]$ (any $m_0\approx \ep n_0$ would suffice). We claim that 
\[
B_n = [T_x(S^{n-m_0}(\L))=m_0\wedge \Pa_{x}(S^{n-m_0}(\L))\in E\wedge a^{(n_0)}(\L)\in A^{n_0}].
\]
The claim follows at once if we show $S^k(\L)>x$ for $0\leq k\leq n-m_0$. Indeed, the event $T_x(S^{n-m_0}(\L))=m_0$ implies that $S^n(\L)\leq x< S^{n-1}(\L)$, which in conjunction with $a^{(n_0)}(\L)\in A^{n_0}$, allow us to use \eqref{ineq:Sestimate3} to obtain
\begin{align*}
S^k(\L)  > e^{-\log^{0.7} x} \left(\frac{p^\mu}{q}\right)^{n-1-k} S^{n-1}(\L) 
  > e^{-\log^{0.8} x}\left(\frac{p^\mu}{q}\right)^{m_0} x  > x^{1+c} e^{-\log^{0.8} x} >x.
\end{align*}
This proves the claim. We can then apply \eqref{ineq:Sestimate3} to deduce
\begin{align}\label{eq: formula for P(B_n)}
    \p{B_n}=\p{S^{n-m_0}(\L)\in E'\wedge a^{(n_0)}(\L)\in A^{n_0}}
\end{align}
where 
\[
E' = \left\{M\in \N\setminus{p\N} :  T_x(M)=m_0, \ \Pa_x(M)\in  E\right\}.
\] 
Since the event $a^{(n_0)}(\L)\in A^{n_0}$ is contained in the event $a^{(n-m_0)}(\L)\in A^{n-m_0}$ we obtain
\begin{align}\label{eq: formula for pass depending on Aff_a,r}
\begin{split}
& \p{\Pa_x(\L)\in E} \\ & =\sum_{n\in I_y}    \p{S^{n-m_0}(\L)\in E'\wedge a^{(n-m_0)}(\L)\in A^{n-m_0}}+O(\log^{-c}x)\\
&=\sum_{\substack{n\in I_y\\a\in A^{n-m_0}\\ R\in \{r(1),\dots,r(p-1)\}^{n-m_0}\\ M\in E'}}\p{R^{(n-m_0)}(\L)=R \wedge S^{n-m_0}(\L)=M \wedge a^{(n-m_0)}(\L)=a}+O(\log^{-c}x).
\end{split}
\end{align}
It is not hard to show (see for instance \cite[Lemma 2.1]{T}) that if for some $M\in \Np$, $R\in \{r(1),\ldots,r(p-1)\}^k$ and $a\in \N^k$ we have $M\equiv F_k(a,R) \m {q^{k}}$ then
\[
N:=p^{|a|}\frac{M-F_{k}(a,R)}{q^{k}} \in \Np,
\]
and moreover, $a^{(k)}(N)=a$, $R^{(k)}(N)=R$ and $S^{k}(N)=M$. Using this observation in conjunction with \eqref{eq:logasymp} we obtain
\begin{align*}
& \p{\Pa_x(\L)\in E} \\ & =\sum_{n\in I_y}    \p{S^{n-m_0}(\L)\in E'\wedge a^{(n-m_0)}(\L)\in A^{n-m_0}}+O(\log^{-c}x)\\
&=\sum_{\substack{n\in I_y\\a\in A^{n-m_0}\\ R\in \{r(1),\dots,r(p-1)\}^{n-m_0}\\ M\in E' \\ M\equiv F_{n-m_0}(a,R) \m {q^{n-m_0}}}}\frac{1}{(1+O(1/x))\frac{\alpha-1}{\mu}\log y}\, \frac{q^{n-m_0}p^{-|a|}}{M-F_{n-m_0}(a,R)}+O(\log^{-c}x) \\
& = \sum_{\substack{n\in I_y\\a\in A^{n-m_0}\\ R\in \{r(1),\dots,r(p-1)\}^{n-m_0}\\ M\in E' \\ M\equiv F_{n-m_0}(a,R) \m {q^{n-m_0}}}}\frac{(1+O(x^{-c}))}{\frac{\alpha-1}{\mu}\log y}\frac{p^{-|a|}q^{n-m_0}}{M} + O(\log^{-c}x).
\end{align*}
In the third identity above we have used that $M-F_{n-m_0}(a,R)=M-O(q^{n_0})=(1+O(x^{-c}))M$, which follows because if $B_n$ holds and  $M=S^{n-m_0}(\L)$ then $T_x(M)=n$ and by \eqref{ineq:Sestimate2} we have
$$
M=e^{O(\log^{0.6} x)} (q/p^\mu)^{n-m_0} \L = e^{O(\log^{0.6} x)} (p^\mu/q)^{m_0} x \geq x^{1+c}.
$$
 By the size estimate \eqref{eq: size of I_y} of $I_y$, it is now enough to show that
\[
q^{n-m_0} \sum_{\substack{a\in A^{n-m_0}\\ R\in \{r(1),\dots,r(p-1)\}^{n-m_0}\\ M\in E' \\ M\equiv F_{n-m_0}(a,R) \m {q^{n-m_0}}}}\frac{p^{-|a|}}{M} = Q'_{x,\al,E} + O(\log^{-c} x)
\]
where $Q'_{x,\al,E'}$ is independent of $n$ and $\be$. Now observe that by \eqref{eq:syracRVproj} the quantity above can be rewritten as
$$
\E[1_{\G^{(n-m_0)}\in A^{n-m_0}}C_{n}(\S_{n-m_0})],
$$
$\G=(\G_1,\G_2,\ldots)$ and $\U=(\U_1,\U_2,\ldots)$ is are infinite vectors with i.i.d. copies of $\G(\mu)$ and $\U({r_1,\ldots,r_{p-1}})$ respectively, and $C_n:\Z/q^{n-m_0}\Z\to\Q_{+}$ is the function defined by
$$
C_{n}(N):=q^{n-m_0}\sum_{M\in E'\,:\,M=N \mod q^{n-m_0}}\frac{1}{M}.
$$
We claim that $C_{n}(N)=O(1)$ for $n\geq m_0$ and $N\in \Z/q^{n-m_0}\Z$, and we postpone the proof to Step $3$. Using \eqref{eq: error for not in A}, Proposition \ref{prop:oscillation} and \eqref{eq:syracmoduloidentity} we then conclude
\begin{align*}
 \E[1_{\G^{(n-m_0)}\in A^{n-m_0}}C_{n}(\S_{n-m_0})] 
& = \E[C_{n}(\S_{n-m_0})]+O(\log^{-c}x) \\
& = \sum_{N\in \Z/q^{n-m_0}\Z}C_n(N)\p{\S_{n-m_0}=N}+O(\log^{-c}x) \\
& =  \sum_{N\in \Z/q^{n-m_0}\Z}C_n(N)q^{2m_0-n}\p{\S_{m_0}=N \mod q^{m_0}}+O(\log^{-c}x)\nonumber\\
&=q^{m_0}\sum_{M\in E'}\frac{1}{M}\p{\S_{m_0}=M \mod q^{m_0}}+O(\log^{-c}x)\nonumber\\
&=Q'_{x,\al,E}+O(\log^{-c}x),
\end{align*}
where $Q'_{x,\al,E'}$ is independent of $n$ and $\be$.
\medskip

\noindent {\bf Step 3.} To complete the proof it remains to show the claim that $C_n(N)=O(1)$ for $n\geq m_0$ and $N\in \Z/q^{n-m_0}\Z$. First we write
\begin{align}\label{eq:Cnexpanded}
C_n(N)=\sum_{\substack{a\in \N^{n-m_0} \\ R
\in \{r(1),\ldots,r(p-1)\}^{n-m_0+1}}}q^{n-m_0} \sum_{\substack{M\in E' \\ M=N \mod q^{n-m_0} \\ a^{(n-m_0)}(M)=a \\ R^{(n-m_0+1)}(M)=R}}\frac{1}{M}.
\end{align}
Let $M$ satisfy the constraints in the inner sum above. Since for $M\in E'$ we have $T_x{M}=m_0$ and 
\[
F_{n}(a,R) = O(q^{m_0})  \ls x^{c},
\]
for any $n\leq m_0$, we deduce that
\begin{align}\label{ineq:rangeforM}
x \approx q^{m_0}p^{-|a|}M.
\end{align}
Moreover, by the iteration formula \eqref{eq:Siterates}, we also have
$$
q^{m_0}M+p^{|a|}F_{n}(a,R))\equiv R_{n-m_0+1}p^{|a|} \m {p^{|a|+1}},
$$
and in particular, applying the Chinese remainder theorem, $M$ is constrained to one specific residue class $N'(a,R) \mod q^{n-m_0}p^{|a|+1}$. We can then use \eqref{ineq:rangeforM} to estimate the inner sum of \eqref{eq:Cnexpanded} to obtain
\[
q^{n-m_0}\sum_{\substack{M\in E' \\ M=N \mod q^{n-m_0} \\ a^{(n-m_0)}(M)=a \\ R^{(n-m_0+1)}(M)=R}}\frac{1}{M} \ls \frac{q^{n-m_0} }{xq^{-m_0}p^{|a|}} \sum_{\substack{M\approx xq^{-m_0}p^{|a|} \\ M\equiv N'(a,R) \m{ q^{n-m_0}p^{|a|+1} }}} 1  \ls p^{-|a|}.
\]
We conclude 
\[
C_n(N) \ls \sum_{\substack{a\in \N^{n-m_0} \\ R
\in \{r(1),\ldots,r(p-1)\}^{n-m_0+1}}}  p^{-|a|} = (p-1),
\]
and this finishes the proof.
	\end{proof}
	
\section{Equivalence to decay of Fourier coefficients}\label{sec:equiv}
In this section we stablish an equivalent form of Proposition \ref{prop:oscillation}.





\begin{prop}\label{prop:decaycharacteristic}
Let $n\in\N$ and $\xi\in\Z/q^n\Z$ not divisible by $q$. Then for every $A>0$ we have that
$$
\E[e^{-2\pi i \xi \S_n/q^n}] \ls_A n^{-A}.
$$
\end{prop}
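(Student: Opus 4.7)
My plan is to deduce Proposition~\ref{prop:decaycharacteristic} from the oscillation bound of Proposition~\ref{prop:oscillation}, which gives one direction of the equivalence promised by the section title. The key structural observation is that, for each $1 \le m < n$, the averaged distribution
\[
p^{\mathrm{avg}}_{n,m}(N) := \frac{1}{q^{n-m}} \sum_{M \in \Z/q^n\Z \,:\, M = N \mod q^m} \p{\S_n = M}
\]
depends only on the residue of $N$ modulo $q^m$, hence is $q^m$-periodic on $\Z/q^n\Z$. A direct computation with geometric series then shows that its discrete Fourier transform on $\Z/q^n\Z$ vanishes at every $\xi$ that is not a multiple of $q^{n-m}$.

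Since the hypothesis $\gcd(\xi, q) = 1$ of Proposition~\ref{prop:decaycharacteristic} implies, in particular, that $\xi$ is not a multiple of $q^{n-m}$ for any $1 \le m < n$, I would subtract the averaged distribution and rewrite the characteristic function as
\[
\E[e^{-2\pi i \xi \S_n/q^n}] = \sum_{N \in \Z/q^n\Z} \bigl(\p{\S_n = N} - p^{\mathrm{avg}}_{n,m}(N)\bigr) e^{-2\pi i \xi N/q^n}.
\]
The triangle inequality bounds the left-hand side in absolute value by $\Osc_{m,n}(\p{\S_n = N})_{N \in \Z/q^n\Z}$, and Proposition~\ref{prop:oscillation} supplies the estimate $\ls_A m^{-A}$ for every $A > 0$. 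Setting $m = \lfloor n/2 \rfloor$ then yields the required bound $\ls_A n^{-A}$.

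The converse direction---recovering the oscillation bound from the Fourier decay---is the more substantive half of the equivalence and I expect it to be the principal obstacle of the section. A naive Parseval or Cauchy--Schwarz bound on $\|p_n - p^{\mathrm{avg}}_{n,m}\|_{\ell^1}$ in terms of the $\ell^2$ norm of the Fourier transform of the difference loses a factor of $q^{n/2}$, which is incompatible with the polynomial rate in~$m$. The route I would pursue is to exploit the recursive identity $\hat p_n(q^j \xi') = \hat p_{n-j}(\xi')$ (a consequence of the fact that $\S_n \mod q^{n-j}$ has the same distribution as $\S_{n-j}$), stratify the dual group $\Z/q^n\Z$ by the $q$-adic valuation of $\xi$, apply Proposition~\ref{prop:decaycharacteristic} at each level $n-j$, and then telescope the contributions carefully to recover the $\ell^1$ oscillation on the spatial side.
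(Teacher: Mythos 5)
Your argument is circular within the logic of the paper. You propose to deduce Proposition~\ref{prop:decaycharacteristic} from Proposition~\ref{prop:oscillation}, but the paper's architecture runs in exactly the opposite direction: Section~\ref{sec:equiv} proves that Proposition~\ref{prop:decaycharacteristic} \emph{implies} Proposition~\ref{prop:oscillation} (via the Cauchy--Schwarz/Plancherel argument and the splitting of $\S_n$ into independent pieces under the event $\G_{1,k+1}=M$), and Section~\ref{sec:proofmain} then proves Proposition~\ref{prop:decaycharacteristic} itself from Theorem~\ref{thm:renewal}. So the oscillation bound is not available as an input here; taking it as given assumes the very statement you are asked to prove. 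The direction you carry out --- that the $\ell^1$ oscillation controls the characteristic function at frequencies coprime to $q$, via subtracting the $q^m$-periodic average and applying the triangle inequality with $m=\lfloor n/2\rfloor$ --- is correct but is the essentially trivial half of the equivalence, and you yourself flag that the converse is ``the more substantive half.'' That substantive half is precisely what the paper must (and does) supply, and your sketch of it (stratifying by $q$-adic valuation and telescoping) does not constitute a proof of either proposition from first principles.

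The actual content of the paper's proof of Proposition~\ref{prop:decaycharacteristic} is entirely different: one groups the terms of $\S_n$ in pairs so that the exponents follow a Pascal distribution, bounds $|\E[\chi(\S_n)]|$ by a product of conditional characteristic factors $|f(q^{2j-2}p^{-\P_{1,j}},\P_j)|$, isolates a residue $j_0$ with $q\nmid \xi r(j_0)$, and introduces the phase $\theta(j,l)$ together with the bad set $\B=\{(j,l): |\theta(j,l)|<\ep\}$. One then shows $\B$ is a union of well-separated triangles of slope $\log_p q^2$ and that each visit outside $\B$ with $\P_j=3$ contracts the product by $e^{-\ep^3}$, reducing the bound to the triangle-avoiding renewal process estimate of Theorem~\ref{thm:renewal}. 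None of this is present in your proposal, so the key idea is missing, not merely reorganized.
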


\begin{proof}[{\bf Proof that Proposition \ref{prop:decaycharacteristic} $\Rightarrow$ Proposition \ref{prop:oscillation}}] $\nonumber$

\noindent {\bf Step 1.} Let  $\gamma:=\tfrac12 \frac{\log_p q}{\mu} + \tfrac12$
so that $\frac{\log_p q}{\mu} < \ga < 1$.
It suffices to prove Proposition \ref{prop:oscillation} in the case
\begin{equation}\label{eta n}
 \gamma n \leq m < n.
\end{equation}
Indeed, once we have \eqref{eq: osc} in this case, since $
\S_m = \S_n \mod q^m$, we also obtain (by periodicity)
\[
\sum_{N \in \Z/q^{m'}\Z} \left|q^{m-m'} \p{\S_m=N \mod q^{m'}} - \p{ \S_{m'}  = N} \right| \ll_A m^{-A}
\]
whenever $\gamma n \leq m \leq m' \leq n$, and the general case follows from telescoping sums and the triangle inequality.
We  fix $A\geq 1$ and let $C_A > 0$ be a sufficiently large constant to be chosen later. We may also assume that $A$ is large and $n$ is sufficiently large depending on $A$. Let $(\G_1,\G_2,\dots,\G_n) \equiv \G(\mu)^{(n)}$. For $k\leq n$ define the event
\begin{equation}\label{ij}
E_k:=\bigwedge_{1\leq i\leq j\leq k}\left( |\G_{i,j} - \mu (j-i+1)| \leq C_A ( \sqrt{(j-i+1)(\log n)} + \log n ) \right).
\end{equation}
Observe $E_n$ implies $E_k$. We can apply the tail bound \eqref{ineq:chernofftail} to obtain (denoting by $\overline E_n$ the complement of $E_n$)
\begin{align}
\p{\overline{E_n}} & \leq \sum_{1\leq i\leq j \leq n}\p{|\G_{i,j} - \mu (j-i+1)| > C_A ( \sqrt{(j-i+1)(\log n)} + \log n )} \\ & \ls n^2 e^{-cC_A \log n}\\ 
&  \ls n^{-A}
\end{align}
if $C_A$ is large.
Note that if $E_n$ holds then
$$ \G_{1,n} \geq \mu n - C_A (\sqrt{n \log n} + \log n) > n {\log_p q}$$
for $n \geq n_A$, since by assumption ${\log_p q} < \mu$. Thus, under the event $E_n$ there is a unique natural number $k < n$ such that
\[ 
\G_{1,k} \leq n {\log_p q} - C_A^3 \log n < \G_{1,k+1},
\]
Such $k$ must satisfy (if $A$ and $C_A$ are large enough)
\begin{equation}\label{bmd-1}
|k - n \frac{\log_p q}{\mu}|  \leq C_A^A( \log n ).
\end{equation}
and, if $ \G_{1,k+1}=M$,
\begin{equation}\label{bmd-2}
n {\log_p q} - C_A^3 \log n \leq  M \leq n {\log_p q} - \tfrac{1}{2} C_A^3 \log n.
\end{equation}
By \eqref{ij} and the triangle inequality it will suffice to show that
\[
\sup_{\substack{k \text{ in the range } \eqref{bmd-1} \\ M \text{ in the range } \eqref{bmd-2}}} \Osc_{m,n}\left( \p{ \S_n = N \wedge E_{k+1} \wedge G_{1,k+1}=M } \right)_{N \in \Z/q^n\Z} \ll_{A} n^{-2A}.
\]
Letting $g = g_{n,k,M}\colon \Z/q^n\Z \to \R$ denote the function
\[
g_{n,k,M}(N) :=\p{ \S_n = N \wedge E_{k+1} \wedge G_{1,k+1}=M }
\]
we want to show that
$$ 
 \sum_{N \in \Z/q^n\Z} \left|g(N) - q^{m-n} \sum_{M \in \Z/q^n\Z \,:\, M = N \mod q^m} g(M) \right| \ll_{A} n^{-2A}
$$
for $\ga n\leq m\leq n$, $k$ in the range \eqref{bmd-1} and $M$ in the range \eqref{bmd-2}. By Cauchy-Schwarz, it is sufficient to show that
\begin{equation}\label{half}
q^n\sum_{N \in \Z/q^n\Z} \left|g(N) - q^{m-n} \sum_{M \in \Z/q^n\Z: M = N \mod q^m} g(M) \right|^2 \ll_{A}  n^{-4A}.
\end{equation}
By Fourier inversion and Plancherel's theorem, the left hand side of \eqref{half} is equal to
\[
\sum_{\xi \in \Z/q^n\Z \,:\, q^{n-m} \,\nmid \,\xi} \left| \sum_{N \in \Z/q^n\Z} g(N) e^{-2\pi i \xi N / q^n} \right|^2=\sum_{\xi \in \Z/q^n\Z \,:\, q^{n-m} \,\nmid \,\xi} \left| \e{ e^{-2\pi i \xi \S_n / q^n} \1_{E_{k+1} \wedge \G_{1,k+1}=M}} \right|^2.
\]

Observe now that by \eqref{eq:Fnrecursionformula} (for $i$ replaced by $n-k-1$), under the event $\G_{1,k+1}=M$, we have the key identity
\begin{align*}
\S_n & \equiv F_{k+1}(\G^{(k)},\U^{(k)}) + q^{k+1} p^{-M} F_{n-k-1}(\G^{(k+2,n)},\U^{(k+2,n)}) \mod q^n \\
& \equiv \S_{k+1} + q^{k+1} p^{-M} \S'_{n-k-1} \mod q^n,
\end{align*}
where $\S'_{n-k-1}$ is a copy of $\S_{n-k-1}$ independent of $\S_{k+1}$. The fundamental point here is that under the event $\G_{1,k+1}=M$ the two terms in the above expression are independent. Letting $\xi'=\xi p^{-M} \mod q^{n-k-1}$, since $q \nmid \xi$ we deduce that $q\nmid \xi'$, hence we can apply Proposition \ref{prop:decaycharacteristic} for some $A'>0$ to obtain
\begin{align*}
& \sum_{\xi \in \Z/q^n\Z \,:\, q^{n-m} \,\nmid \,\xi} \left| \e{ e^{-2\pi i \xi \S_n / q^n} \1_{E_{k+1} \wedge \G_{1,k+1}=M}} \right|^2 \\ 
& = \sum_{\xi \in \Z/q^n\Z \,:\, q^{n-m} \,\nmid \,\xi} \left| \e{ e^{-2\pi i \xi \S_{k+1} / q^n} \1_{E_{k+1} \wedge \G_{1,k+1}=M}} \e{ e^{-2\pi i \xi' \S_{n-k-1} / q^{n-k-1}}}\right|^2\\
& \ls_{A'} (n-k-1)^{-2A'} \sum_{\xi \in \Z/q^n\Z \,:\, q^{n-m} \,\nmid \,\xi} \left| \e{ e^{-2\pi i \xi \S_{k+1} / q^n} \1_{E_{k+1} \wedge \G_{1,k+1}=M}} \right|^2 \\
& \ls_{A'}  n^{-2A'} q^n \sum_{N \in \Z/q^n\Z} \p{\S_{k+1}=N \wedge E_{k+1} \wedge \G_{1,k+1}=M}^2,
\end{align*}
where in the last inequality we have used that $n-k-1 \geq n(1-\tfrac{\log_p q}{\mu}) + O_A(\log n) \gtrsim n$ and applied Plancherel's theorem after discarding the condition $q^{n-m} \,\nmid \,\xi$. The last term in the chain of inequalities above is equal to
\begin{align*}
n^{-2A'} q^n \sum_{\substack{N \in \Z/q^n\Z \\ a\in \N^{k+1} \\ R\in \{r(1),\dots,r(p-1)\}^{k+1}}} \p{ \U^{(k+1)}=R \wedge \G^{(k+1)}=a \wedge \S_{k+1}=N \wedge E_{k+1} \wedge \G_{1,k+1}=M}^2.
\end{align*}
We now claim that under the bounds \eqref{bmd-1} and \eqref{bmd-2} and given $R$, the event 
\[
\U^{(k+1)}=R \wedge \G^{(k+1)}=a \wedge \S_{k+1}=N \wedge E_{k+1} \wedge \G_{1,k+1}=M
\]
is non-empty for at most one pair $(a,N)\in \N^{k+1}\times \Z/q^n\Z$. We leave the proof of this claim for the next step. We deduce that the left hand side of \eqref{half} is bounded by 
\begin{align*}
& \ls_{A'} n^{-2A'} q^n p^{-M} \ll n^{-2A'}n^{ C_A^2\log p} \ll n^{-4A}
\end{align*}
 if $A'\gtrsim_A 1$. This finishes the proof.
 \medskip
 
\noindent {\bf Step 2}. We now prove the remaining claim. First we note that for each $R\in \{r(1),\ldots,r(p-1)\}^l$, the map $F_l(\cdot,R)\colon \N^l \to \Z[\frac{1}{p}]$ is injective.
Indeed,  assume that $F_n(a,R) = F_n(a',R)$.  Using formula \eqref{eq: Fk formula} and taking $p$-valuations on both sides we obtain that
$$ a_{1,l} = a'_{1,l}.$$
Moreover, by formula \eqref{eq:Fnrecursionformula} we have
$$ F_l(a,R) = q^{l-1} p^{-a_{1,l}}R_1 + F_{l-1}(a^{(2,l)},R^{(2,l)})$$
and similarly for $a'$. Therefore
$$ F_{l-1}(a^{(2,l)},R^{(2,l)}) = F_{l-1}(a'^{(2,l)},R^{(2,l)}).$$
The results follows by induction. Below we present a $q$-adic version.

\begin{lem}
Let $k\leq n$, $M$ obey \eqref{bmd-2} and $R\in \{r(1),\ldots,r(p-1)\}^{k+1}$. Then the residue classes $F_{k+1}(a,R) \mod  q^n$ as $a\in \N^{k+1}$ varies over tuples obeying the two conditions
\begin{equation}\label{eq: q-adic res 2}
a_{1,k+1} = M
\end{equation}
and
\begin{equation}\label{eq: q-adic res}
 |a_{i,j} - \mu(j-i+1)| \leq C_A \left( \sqrt{(j-i+1)(\log n)} + \log n \right)
\end{equation}
for $1 \leq i \leq j \leq k+1$,  are all distinct (if $C_A$ is large).
\end{lem}

\begin{proof}  Suppose that $a, a' \in \N^l$ are two tuples that both obey  \eqref{eq: q-adic res 2} and \eqref{eq: q-adic res}, and such that
$F_{k+1}(a,R) \mod q^{n} = F_{k+1}(a',R) \mod q^{n}$.
Using \eqref{eq: Fk formula} and multiplying by $p^M$ we conclude that
\begin{equation}\label{lrs}
 \sum_{i=1}^{{k+1}} q^{i-1} p^{M - a_{k+2-i,k+1}}R_{k+2-i}  \equiv  \sum_{i=1}^{{k+1}} q^{i-1} p^{M - a'_{k+2-i,k+1}}R_{k+2-i} \ \m{ q^{n}}.
\end{equation}
 Using \eqref{eq: q-adic res} and \eqref{bmd-2} we have
\begin{align*}
\left| \sum_{i=1}^{{k+1}} q^{i-1} p^{M - a_{k+2-i,k+1}}R_{k+2-i}\right| & \ls p^M \sum_{i=1}^{{k+1}} q^{i} p^{- \mu i + C_A (\sqrt{i \log n} + \log n)} \\
& \leq  n^{(C_A- \tfrac12 C_A^3) \log p } q^{n} \sum_{i=1}^{{k+1}} p^{-(\mu-\log_p q)i + C_A \sqrt{i \log n}} \\
& \leq  n^{(C_A + \tfrac1c C_A^2- \tfrac12 C_A^3) \log p } \sum_{i=1}^{{k+1}} p^{-(\mu-\log_p q-c)i } \\
& \ll_{C_A}  n^{ - \tfrac14 C_A^3 \log p } q^{n}.
\end{align*}
where above we have used the inequality $|xy| \leq cx^2+\tfrac1c y^2$.  In particular, for $n$ large enough, this expression is less than $\tfrac12 q^n$.  Similarly for the right-hand side of \eqref{lrs}.  Thus the two sides of \eqref{lrs} are equal as natural numbers, not simply as residue classes modulo $ q^n$. We conclude $F_{{k+1}}(a,R) = F_{{k+1}}(a',R)$.  The claim follows from the injectivity of $F_{k+1}(\cdot ,R)$.
\end{proof}
We then see that, given $M$ obeying \eqref{bmd-2} and $R\in \{r(1),\ldots,r(p-1)\}^{k+1}$, the event
\[
\U^{(k+1)}=R \wedge \G^{(k+1)}=a \wedge \S_{k+1}=N \wedge E_{k+1} \wedge \G_{1,k+1}=M
\]
implies that $F_{k+1}(a,R)=N$, which by the previous lemma  and the injectivity of $F_{k+1}(\cdot ,R)$ uniquely defines the pair $(a,N) \in \N^k \times \Z/q^n\Z$, which proves the desired claim.
\end{proof}



\section{Proof of the Main Result}\label{sec:proofmain}
In this section, by using our previous results, we conclude \thmref{thm:main}. It is now enough to prove Proposition \ref{prop:decaycharacteristic} and we do this assuming Theorem \ref{thm:renewal}.

\begin{proof}[{\bf Proof that Theorem \ref{thm:renewal} $\Rightarrow$ Proposition \ref{prop:decaycharacteristic}}]
$\nonumber$

\noindent {\bf Step 1.} First we use \eqref{eq:syracRVproj} to group neighbour terms to get
$$
\S_n = \sum_{j=1}^{n/2} q^{2j-2}p^{-\G_{1,2j}}(p^{\G_{2j}}\U_{2j-1} + q \U_{2j}) \mod q^{n} \equiv \sum_{j=1}^{n/2} q^{2j-2}p^{-\P_{1,j}}(p^{\G_{2j}}\U_{2j-1} + q \U_{2j}) \mod q^{n}
$$
when $n$ is even, where $\P=(\P_1,\P_2,\ldots)$ is a vector with i.i.d. copies of the Pascal distribution $\P_j \equiv \G_1+\G_2$, in particular
$$
\p{\P_j = N} = (p-1)^2 (N-1)p^{-N}.
$$
A similar formula holds when $n$ is odd with an extra term in the end. Let $\xi \in \Z/q^n\Z$ not divisible by $q^n$ and define
$$
\chi(x):= e^{-2\pi i \xi (x \mod q^n) /q^n}.
$$
so $\chi(\S)=\chi(\S_n)=e^{-2\pi i \xi \S_n /q^n}$.
Fubini's theorem and independence allows to bound
\begin{align}\label{ineq:expectationfbound}
|\E[\chi(\S)]|  \leq \e{\prod_{j=1}^{\hn-B}|f(q^{2j-2} p^{-\P_{1,j}},\P_j)|}
\end{align}
where $B=B(p,q,r)\in \N$ is to be chosen later on and where
$$
f(x,N)=\e{\chi(x(p^{\G_1}\U_1+q\U_2)) | \G_1+\G_2=N}.
$$
The same bound holds for odd $n$ since the extra term can be bounded by $1$.
Since $q$ does not divide $\xi$ and by assumption $\gcd(q,r(1),\ldots,r(p-1))=1$, there must be some $j_0$ such that $q$ does not divide $\xi r(j_0)$. We then define $\theta(j,l) \in (-1/2,1/2]$  by 
\begin{align}\label{def:theta}
\theta(j,l):=\left\{\frac{\xi r(j_0) (p-1)q^{2j-2}(p^{1-l} \mod q^n)}{q^n}\right\}_1 \quad {\rm for}\  (j,l)\in \N^2,
\end{align}
where we let $\{x\}_{1}$ is the unique real in the set $(x+\Z)\cap (-1/2,1/2]$.

For a given $\ep>0$ we now let 
\[
\B = \{(j,l)\in \N^2 : |\theta(j,l)|< \ep\}.
\]
We claim that $\B$ \emph{is} a disjoint union of triangles of the form \eqref{def:triangles} with slope $\eta=\log_p q^2$, and that those which are in contact with the region $\{j\leq \hn-B, l\geq 1\}$ are separated from each other and from the edge $\{\hn-B\}\times \N$ by $c \log(1/\ep)$. We postpone the proof of this claim to Step {$2$}. We have that 
\begin{align*}
|f(q^{2j-2} p^{-l},3)| & = \frac{1}{(p-1)^2}\left|\sum_{i,j=1}^{p-1} \frac12 \chi(x(p r(i) + qr(j))) + \frac12 \chi(x(p^2 r(i) + qr(j)))\right|_{x=q^{2j-2} p^{-l}} \\
& \leq \frac{1}{p-1}\sum_{i=1}^{p-1} \left| \frac{1 +  \chi(xp(p-1) r(i))}{2}\right|_{x=q^{2j-2} p^{-l}} \\ 
& = \frac{1}{p-1}\sum_{i=1}^{p-1} |\cos(\pi \xi q^{2j-2} p^{1-l}(p-1)r(i)/q^n)| \\
& \leq1-\frac{1-|\cos(\pi \theta(j,l))|}{p-1},
\end{align*}
where above we bounded all terms by $1$ except the term with $i=j_0$, and have used that by construction $|\theta(j,l)|\leq \tfrac{1}{2}$. Thus, if $(j,l)\notin \B$ then $\ep< |\theta(j,l)|\leq \tfrac12$ and we have 
\[
|f(q^{2j-2} p^{-l},3)|\leq 1-c\ep^2 \leq e^{-\ep^3}
\]
for small $\ep>0$. Therefore, a simple way to bound the right hand side of \eqref{ineq:expectationfbound} is
\[
\e{\prod_{j=1}^{\hn-B}|f(q^{2j-2} p^{-\P_{1,j}},\P_j)|}  \leq \e{e^{-\ep^3 \#\{  j\in[n/2-B]\,:\, \P_{j}=3, \, \P_{1,j}=l, \, (j,l)\notin \B\}}}.
\]
Now note that if we define $\H:=(\J,\P_{1,\J})$, where $\J$ is the first time such that $\P_{\J}=3$ (so $\P_l\neq 3$ for $l<\J$), then we have
\[
\#\{ j\in[n/2-B]\,:\, \P_{j}=3, \,(j, \P_{1,j}) \notin \B\} \equiv \#\{ k \in [n/2-B] \,:\, \H_{1,k}\notin \B \} 
\]
where $(\H_1,\H_2,\ldots)$ are i.i.d. copies of $\H$ and $[n/2-B]=\{1,2,\ldots,\hn-B\}$. Hence,
\begin{align}\label{ineq:badset}
|\E[e^{-2\pi i \xi \S_n/q^n}]| \leq \e{e^{-\ep^3\#\{ k \in [n/2-B] \,:\, \H_{1,k}\notin \B \}}}.
\end{align}
A straightforward computation shows that $\J\equiv \G(\la)$ with $\la=\tfrac{p^3}{2(p-1)^2}$ and that 
\[
\E[\H]=\la(1,2\mu).
\]
(one could guess this formula by the heuristic $\H \approx (\la,\sum_{k=1}^\la 2\mu)=\la(1,2\mu)$). In particular, $\H$ has expected slope $2\mu >  \log_p q^2=\eta$. It is also not hard to show that
\[
\infty>\e{e^{2c|\H|}}  = \int_0^\infty \p{|\H|\geq t}e^{2ct}dt
\]
if $c$ is sufficiently small. We conclude that $\H$ has exponential tail
$
\p{|\H|\geq t} \ls e^{-c t}.
$
Clearly $\H$ is not supported in a coset of a proper subgroup of $\Z^2$ since any point $\N \times (1+\N)$ has positive probability (alternatively, one can compute directly the covariance matrix of $\H$ and see that it is positive definite). We have now satisfied all conditions of Theorem \ref{thm:renewal} for the renewal process $\X_{1,k}=\H_{1,k}$, with $\omega=2\mu$, $\ep$ replaced by $\ep^3$ and $n$ by $\hn-B$. We can then apply Theorem \ref{thm:renewal} in conjunction with \eqref{ineq:expectationfbound} and \eqref{ineq:badset} to conclude that for any $A>0$ we have
\[
|\E[e^{-2\pi i \xi \S_n/q^n}]| \ls_{A,\ep} n^{-A}.
\]
\medskip

\noindent {\bf Step 2.} It now remains to prove the claim about $\B$. In what follows $\ep$ is to be taken sufficiently small (depending only on $p,q,r,\xi$). {First we show that if $(j,l)\in \B$ and $j\leq \hn -B$ then $j\leq \hn -B-c\log(1/\ep)$. Indeed, if $|\theta(j,l)|<\ep$ and $j\leq \hn -B$ then we can multiply \eqref{def:theta} by $q^{n-2j-2B}$ to obtain
\begin{align}\label{id:separationj=n/2}
\frac{\xi r(j_0)(p-1)(p^{1-l} \mod q^n)}{q^{2B+2}} \in q^{n-2j-2B}(-\ep,\ep) + \Z
\end{align}
Since $q$ does not divide $\xi r(j_0)$, there is a prime $a$ such that $\xi r(j_0)=a^s u$, $q=a^{s'} u'$ with $s'>s$ and $a$ does not divide $u$ and $u'$. Let $p-1=a^{s''} u''$ where $a$ does not divide $u''$. Since $(p^{1-l} \mod q^n)$ is coprime with $q$ we deduce that $\xi r(j_0)(p-1)(p^{1-l} \mod q^n)=a^{s+s''}u'''$ with $u'''$ coprime with $a$. In particular, if we take $B$ such that $(2B+2)s'=s+s''+1$ we conclude that $\{\xi r(j_0)(p-1)(p^{1-l} \mod q^n))/q^{2B+2}\}_1 =k/q^{2B+2}$ for some $k\neq 0$. Hence $q^{n-2j-2B}\ep \geq 1/q^{2B+2}$ and so $j\leq \hn +1 - B  - \tfrac12\log(q^{-2B-2}/\ep)\leq \hn - B-c\log (1/\ep)$. }

Now observe that by \eqref{def:theta} we have
\begin{align*}
\theta(j',l')& = \{p^{\eta(j'-j)+(l-l')} \theta (j,l) \}_1 \quad (j'\geq j \text{ and } l'\leq l)\\
\theta(j,l) & =\{u\theta(j+1,l) + v \theta(j,l-1)\}_1,
\end{align*}
where we set $\eta=\log_p q^2$ and $u,v\in \Z$ are uniquely chosen such that $1=uq^2+vp$.  Note first that  if $(j_0,l_0)\in \B$, $|\theta(j_0,l_0)|=\ep p^{-s_0}$ for some $s_0>0$ and
$$
\D(j_0,l_0,s_0) := \{(j,l)\in \N^2 : j \geq j_0, l\leq l_0, (j -j_0)\eta + (l_0-l) \leq s_0\},
$$
then it is easy to see that $\D(j_0,l_0,s_0) \subset \B$. Indeed, if $(j,l)\in \D(j_0,l_0,s_0)$ then 
\begin{align*}
\log_p |\theta(j,l)| & = \log_p | \{q^{2(j-j_0)}p^{l_0-l}\theta(j_0,l_0)\}_1 |  \leq (j-j_0)\eta + (l_0-l)  + \log_p \ep - s_0 \leq \log_p \ep.
 \end{align*}
 Now let $(j_1,l_1)\in \B$ be given and then, moving inside $\B$, go up as far as possible and then left as far as possible, arriving at a point $(j_0,l_0) \in \B$, with $|\theta(j_0,l_0)|=\ep p^{-s_0}$. Let $\D^\ep$ be a $c \log(1/\ep)$-neighbourhood of $\D:=\D(j_0,l_0,s_0)$. It is now enough to show that \[
 (\D^\ep \setminus \D) \cap \B = \emptyset.
 \]
Indeed, this guarantees that $(j_1,l_1)\in \D$ and that triangles are $c\log(1/\ep)$-separated. Let $(j,l)\in \D^\ep \setminus \D$ and $(j_2,l_2)\in \D$ be such that 
\[
|j_2-j|+|l_2-l|={\rm dist}((j,l),\D) < c \log(1/\ep).
\]
Clearly, one of the following cases must hold: 
\begin{enumerate}
\item[(A)] $j<j_0$ and then $(j_2,l_2)=(j_0,l)$; 
\item[(B)] $j\geq j_0$, $l>l_0$ and then $(j_2,l_2)=(j,l_0)$; 
\item[(C)] $j\geq j_0$, $l\leq l_0$, $s_0 < \eta (j-j_0) + (l_0-l) $ and then $s_0 \ls (j_2-j_0)\eta+(l_0-l_2) \leq s_0$.
\end{enumerate} 
In cases (A), (B) and (C) we respectively have
\begin{align}
\eta(j-j_0) + (l_0-l) &=\eta(j-j_2) + (l_0-l_2) < c \eta \log(1/\ep) + s_0,\\
\eta(j-j_0) + (l_0-l)&=\eta(j_2-j_0) + (l_2-l) < s_0+ c \log(1/\ep),\\
\eta(j-j_0) + (l_0-l) & = \eta(j-j_2) + (l_2-l) + \eta(j_2-j_0) + (l_0-l_2)< \eta c \log(1/\ep) +s_0,
\end{align}
that is, in any case we have 
\begin{align}\label{ineq:cases}
\eta(j-j_0) + (l_0-l) < s_0 + c \eta\log(1/\ep).
\end{align}
In particular
\[
|p^{\eta(j-j_0)+(l_0-l)}\theta(j_0,l_0)| \leq \ep^{1-c\eta},
\]
and since $\theta(j,l)=\{p^{\eta(j-j_0)+(l_0-l)}\theta(j_0,l_0)\}_1$ we deduce that 
\[
\theta(j,l)=p^{\eta(j-j_0)+(l_0-l)}\theta(j_0,l_0).
\] 
In case $(C)$ this already shows that $|\theta(j,l)|>\ep$ and so $(j,l)\notin \B$.  In case (B) the idea is to use that by construction $(j_1,l_0+1)\notin \B$ and $(j_1,l_0)\in\B$, and thus since $\theta(j_1,l_0)=\{ p\theta(j_1,l_0+1)\}_1$, we must have $\theta(j_1,l_0)\neq p\theta(j_1,l_0+1)$.  We proceed with the proof. In this case we have $l-l_0< c\log(1/\ep)$ and by \eqref{ineq:cases} we have  $\eta(j-j_0) < c(\eta+1)\log(1/\ep)$. Assume by contradiction that $(j,l)\in \B$, that is, $|\theta(j,l)|<\ep$. We obtain
\[
|p^{l-l_0-1}\theta(j,l)| < p^{-1}\ep^{1-c},
\]
and since $\theta(j,l_0+1)=\{p^{l-l_0-1}\theta(j,l)\}_1$ we deduce $\theta(j,l_0+1)=p^{l-l_0-1}\theta(j,l)$ and so $p|\theta(j,l_0+1)|<\ep^{1-c}$. Hence $\theta(j,l_0)=p\theta(j,l_0+1)$ and $|\theta(j,l_0)|<\ep^{1-c}$. In particular, by construction, $j_1\neq j$. Assume now $j_1<j$. Assume by induction that $\theta(j-k,l_0)=p\theta(j-k,l_0+1)$ and $|\theta(j-k,l_0)|<\ep^{1-(\eta+1)c}$ for some $k
\geq 0$ such that $j_1<j-k$. First notice that since $|p^{\eta(j-k-j_0)}\theta(j_0,l_0)|< \ep^{1-c(\eta+1)}$, then $\theta(j-k,l_0)=p^{\eta(j-k-j_0)}\theta(j_0,l_0)$ and
$|\theta(j-k,l_0+1)| = |\theta(j-k,l_0)/p| < p^{-1}\ep^{1-c(\eta+1)}$. Secondly, that since $\theta(j-k-1,l_0)=\{p^{\eta(j-k-1-j_0)}\theta(j_0,l_0)\}_1$ and $|p^{\eta(j-k-1-j_0)}\theta(j_0,l_0)|< \ep^{1-c(\eta+1)}$, we have $\theta(j-k-1,l_0)=p^{\eta(j-k-1-j_0)}\theta(j_0,l_0)$ and so $|\theta(j-k-1,l_0)|< \ep^{1-c(\eta+1)}$. These two inequalities in conjunction with
\[
\theta(j-k-1,l_0+1)=\{u\theta(j-k,l_0+1)+v\theta(j-k-1,l_0)\}_1
\]
show that $\theta(j-k-1,l_0+1)=u\theta(j-k,l_0+1)+v\theta(j-k-1,l_0)$ and $|\theta(j-k-1,l_0+1)|< 2(|u|+|v|) \ep^{1-c(\eta+1)}$. We conclude that $p\theta(j-k-1,l_0+1)=\theta(j-k-1,l_0)$ and $|\theta(j-k-1,l_0)|<\ep^{1-c(\eta+1)}$. We deduce by induction that $p\theta(j_1,l_0+1)=\theta(j_1,l_0)$, which is absurd. Assume now $j_1>j$. Assume again by induction that $\theta(j+k,l_0)=p\theta(j+k,l_0+1)$ for some $k\geq 0$ such that $j+k<j_1$. Since $\theta(j+k+1,l_0+1)=\{p^{\eta}\theta(j+k,l_0+1)\}_1$ and $|p^{\eta}\theta(j+k,l_0+1)| =p^{\eta-1}|\theta(j+k,l_0)|<p^{\eta-1}\ep$ (because $(j+k,l_0)\in \B$ by construction) we deduce $\theta(j+k+1,l_0+1)=p^{\eta}\theta(j+k,l_0+1)$ and so $p|\theta(j+k+1,l_0+1)|<p^{\eta}\ep$. Since $\theta(j+k+1,l_0)=\{p\theta(j+k+1,l_0+1)\}_1$ we obtain $\theta(j+k+1,l_0)=p\theta(j+k+1,l_0+1)$. We conclude by induction that $\theta(j_1,l_0)=p\theta(j_1,l_0+1)$, which is absurd. In case (A) the argument is similar and uses the fact that $(j_0,l_0)\in \B$ but $(j_0,l_0-1)\notin \B$. We leave the details to the reader.
\end{proof}

\section{Proof of Theorem \ref{thm:renewal}}\label{sec:proofrenewal}
As usual $c$ will represent arbitrarily small positive numerical quantity that can change from line to line.

\noindent {\bf Step 1.} Without loss of generality we can assume $\B=\sqcup \D(j_\D,l_\D,s_\D)$. By classical Chernoff-type bounds (see \cite[Lemma 2.2]{T}) we have
\begin{align}\label{ineq:decayX}
\begin{split}
 \p{\X_{1,k}=(j,l)} &\ls k^{-1}G_k(c((j,l)-k\e{\X})) 
 \\  \p{|\X_{1,k}-k\e{\X}|>t}  &\ls G_k(c t),
 \end{split}
\end{align}
where $G_k(y)=e^{-|y|} + e^{-y\cdot y /k}$. For a given $s\in \N$ we define $\K_s:=\min\{k\in\N: \L_{1,k}>s\}$. We claim that 
\begin{align}\label{ineq:decayfirsttime}
\p{\X_{1,\K_s}=(j,l)} \ls s^{-1/2} e^{-c(l-s)}G_{s+1}(c(\omega j-s)),
\end{align}
which easily implies that 
\begin{align}\label{ineq:decayfirsttime2}
\p{\J_{1,\K_s}=j}  \ls s^{-1/2} G_{s+1}(c(\omega j-s)) \quad \text{and} \quad \p{\L_{1,\K_s}=l}  \ls \1_{l>s}e^{-c(l-s)}.
\end{align}
Note \eqref{ineq:decayfirsttime} is essentially saying that $\X_{1,\K_s}$ is roughly uniformly distributed in the set $j=s/\omega + O(\sqrt{s})$ and $s<l<s + O(1)$, and indeed one can show that if we let
\[
U_B:=\{(j,l)\in \N^2 \, :\,  |j-s/\omega| \leq B s^{1/2}, \ s<l<s+B\}.
\]
Then we have
\begin{align}\label{ineq:lowerboundunif}
\p{\X_{1,\K_s}\in U_B} \gtrsim 1 \quad \text{for} \quad B\gtrsim 1.
\end{align}
To show \eqref{ineq:decayfirsttime} first note that, by construction, if $\K_s=k$ and $l=\L_{1,k}>s\geq \L_{1,k-1}=l-\L_k$ then $\L_k\geq l-s$ and we obtain
\begin{align*}
\p{\X_{1,\K_s}=(j,l)} &\leq \sum_{k\geq 1} \p{\X_{1,k}=(j,l) \wedge \L_k\geq l-s} \\
& = \sum_{k\geq 1} \sum_{\substack{j' < j \\ l' \geq l-s \geq 1}}\p{\X_{1,k-1}=(j-j',l-l')}\p{\X_k=(j',l')}\\
& \ls  \sum_{k\geq 1} \sum_{\substack{ 1\leq j' < j \\ l' \geq l-s \geq 1}} k^{-1}G_{k}(c^2(j-j',l-l')-c^2(k-1)\e{\X})e^{-c(l'+j')} \\
& = e^{-c(l-s)}\sum_{k\geq 1} \sum_{\substack{ 1\leq j' < j \\ 1\leq l'' \leq s}} k^{-1}G_{k}(c^2(j-j',s-l'')-c^2(k-1)\e{\X})e^{-c(l''+j')} \\
& \ls e^{-c(l-s)}\sum_{k\geq 1} k^{-1}G_{k}(c(j,s)-c(k-1)\e{\X}) \\
& \ls s^{-1/2}e^{-c(l-s)}G_{s+1}(c(\omega j-s)),
\end{align*}
where in the third inequality we have used the exponential tail decay of $\X$ and the bound \eqref{ineq:decayX}. The last two inequalities are routine computations.

We now let $\overline{\B}=\N^2 \setminus \B$ and define
\begin{align}\label{def:Q}
Q(j,l):=\e{ \exp(-\ep \#\{k \geq 0  :  (j,l) + \X_{1,k} \notin \B\})},
\end{align}
(with $\X_{1,0}=0$) for all $(j,l)\in \N^2$. Notice that
\begin{align}\label{id:Qrecursion}
Q(j,l)=e^{-\ep \1_{\overline{\B}}(j,l)}\e{Q((j,l)+\X)}
\end{align}
and
\[
\e{\exp\left({-\ep \#\{k\geq 1 : \X_{1,k} \notin \B\}}\right)}=\e{Q(\X)}.
\]
Our goal now is to show $\e{Q(\X)} \ls_{A,\ep} n^{-A}$ for any given $A\in \N$. For $m\geq 1$ we let 
$$
Q_m := \max_{\substack{j\geq n-m \\ l\geq 1}}(n-j)_+^A Q(j,l)
$$
where $(x)_+=\max(1,x)$. We claim that there is $C_{A,\ep}>0$ such that if $Q_{m} \geq  C_{A,\ep}$ for some $m$ then $Q_{m'}=Q_{m}$ for all $m'\geq m$. In other words, the quantity $Q_m$ is eventually constant and it plateaus at some point $m_{A,\ep}$. We postpone the proof of this claim to the next steps. Assuming the claim is true the theorem follows easily since
\[
Q(\X) \ls_{A,\ep} (n-\J)^{-A} \leq 2^A n^{-A}\J^A
\]
and so $\e{Q(\X)} \ls_{\ep,A} n^{-A} \e{\J^A} \ls_A  n^{-A}$, since $\e{\J^A}<\infty$.
\medskip

\noindent {\bf Step 2.}
In the following steps we stablish the remaining claim about the quantity $Q_m$ from Step 1. Since $Q_m \leq Q_{m+1}$ for all $m\geq 1$ it not hard to realize that it is sufficient to show that
\begin{align}\label{ineq:goalforQ}
Q(n-m,l) \leq m^{-A}Q_{m-1} \quad \text{for } m\geq m_{A,\ep} \, \text{ and } \, l\geq 1.
\end{align}
The idea now is to exploit the recursion formula \eqref{id:Qrecursion} and separate the analysis into the following three cases: 
\begin{enumerate}
\item[(I)] $(n-m,l) \notin \B$;
\item[(II)] $(n-m,l) \in \D(j_\D,l_\D,s_\D)$  and $l \geq \l_\D - m/\log^2(m)$;
\item[(III)] $(n-m,l) \in \D(j_\D,l_\D,s_\D)$ and $l \leq l_\D- m/\log^2(m)$. 
\end{enumerate}

\noindent \emph{Case} (I). Here we can directly use \eqref{id:Qrecursion} and the fact that $\J\geq 1$ a.s. to obtain
\[
Q(n-m,l) \leq e^{-\ep} Q_{m-1}\e{(m-\J)_+^{-A}} \leq e^{-\ep} m^{-A}Q_{m-1}\e{e^{2A\J\frac{\log m}{m}}},
\]
where above we have used the inequality 
\begin{align}\label{ineq:weirdineq}
(m-j)^{-1}_+ \leq m^{-1}e^{2j\frac{\log m}{m}}.
\end{align}
Since $\E[e^{2A\J\frac{\log m}{m}}]\to 1$ as $m\to\infty$, there must be $m_{A,\ep}$ such that $\E[e^{2A\J\frac{\log m}{m}}] \leq e^{\ep}$ for $m\geq m_{A,\ep}$ and we obtain \eqref{ineq:goalforQ}.
\medskip

\noindent \emph{Case} (II). We let $s=l_\D-l=O(\frac{m}{\log^2 m})$ and  use the definition \eqref{def:Q} to obtain
\begin{align*}
Q(n-m,l) & = \e{\exp({-\ep \#\{0\leq k<\K_s : (n-m,l)+\X_{1,k} \notin \B\}})Q((n-m,l)+\X_{1,\K_s})} \\
&\leq \e{Q((n-m,l)+\X_{1,\K_s})} \\
& \leq Q_{m-1}\e{e^{-\ep\1_{\overline{\B}}((n-m,l)+\X_{1,\K_s})}(m-\J_{1,\K_s})_+^{-A}} \\
& \leq m^{-A}Q_{m-1}\e{\exp\left(2A\J_{1,\K_s}\tfrac{\log m}{m}-\ep\1_{\overline{\B}}((n-m,l)+\X_{1,\K_s}\right)} \\
& \leq m^{-A}Q_{m-1}\big(\e{\exp\left(2A\J_{1,\K_s}\tfrac{\log m}{m}\right)}-\ep^2\p{(n-m,l)+\X_{1,\K_s}\notin \B}\big),
\end{align*}
where in the second inequality we  have used the recursion \eqref{id:Qrecursion} and the definition of $Q_{m-1}$, in the third the bound \eqref{ineq:weirdineq} and in the fourth that $e^{-\ep} \leq 1-\ep^2$ for small $\ep$.  It is not hard to show using \eqref{ineq:decayfirsttime2} that the function $\theta\mapsto \E[\exp( \frac{\theta}{s}\J_{1,\K_s})]$ is smooth for small $|\theta|\leq c \omega$ and so
\[
\e{\exp\left(2A\J_{1,\K_s}\tfrac{\log m}{m}\right)} = 1+O(As \tfrac{\log m}{m}) = 1+O(\tfrac{A}{\log m}).
\]
Since $(n-m,l)\in \D(j_\D,l_\D,s_\D)$ and $s=l_\D-l$ we have $\eta(n-m-j_\D)+s \leq s_\D$. Therefore, if $(n-m,l)+\X_{1,\K_s}=(j',l')$ with $l'=l_\D+O(1)$ and $j'=n-m+s/\omega+O(s^{1/2})$ we have
\[
\eta(j'-j_\D) \leq s_\D-s(1-\eta/\omega) +O(s^{1/2}) \leq s_\D+O(1),
\]
where we have used the hypothesis that $\omega>\eta$. Thus $(j',l')\in (\D(j_\D,l_\D,s_\D) + O(1))\setminus \D(j_\D,l_\D,s_\D)$. Due to the separation $\gtrsim \log (1/\ep)$ of the triangles, one can choose $\ep$ sufficiently small so to guarantee that $(j',l')\notin \B$. Using \eqref{ineq:lowerboundunif} we can find $B$ sufficiently large such that $\X_{1,\K_s} \in U_B \Rightarrow (n-m,l)+\X_{1,\K_s}\notin \B$ and we obtain
\begin{align}\label{ineq:lowerboundnotinB}
\p{(n-m,l)+\X_{1,\K_s}\notin \B} \geq \p{\X_{1,\K_s} \in U_B} \geq  c.
\end{align}
We conclude that $Q(n-m,l) \leq (1+O(A/\log m)-c\ep^2)m^{-A}Q_{m-1}$ and thus \eqref{ineq:goalforQ} follows.
\medskip

\noindent \emph{Case} (III). Roughly speaking, if $(n-m,l)$ is deep inside a triangle then one needs to find several steps outside $\B$ to account for the degradation suffered. Again we select $s=l_\D-l$, and so $s\geq \tfrac{m}{\log^2 m}$. Also note that since $\eta(n-m-j_\D)+s \leq s_\D$ and, by assumption, triangles are for away from the edge $\{n\}\times \N$, that is, $j_\D+s_\D/\eta \leq n-c \log(1/\ep)$, then we conclude 
\begin{align}\label{ineq:sbounds}
\frac{m}{\log^2 m}\leq s\leq \eta m.
\end{align}
As before we start with the inequality
\begin{align*}
&Q(n-m,l) \\ &\leq \e{Q((n-m,l)+\X_{1,\K_s})} \\
& =\e{\exp({-\ep \#\{0\leq k<\K_s + P: (n-m,l)+\X_{1,k} \notin \B\}})Q((n-m,l)+\X_{1,\K_s+P})}\\
& \leq Q_{m-1}\e{\exp\left({-\ep\sum_{k=0}^P \1_{\overline{\B}}((n-m,l)+\X_{1,\K_s+P})}\right)(m-\J_{1,\K_s+P})_+^{-A}},
\end{align*}
where $P=P_{A,\ep}$ is to be chosen later. It is  now suffices to show that
\begin{align}\label{ineq:Qgoal2}
\e{\exp\left({-\ep\sum_{k=0}^P \1_{\overline{\B}}((n-m,l)+\X_{1,\K_s+k})}\right)(m-\J_{1,\K_s+P})_+^{-A}} \leq m^{-A}
\end{align}
for $m\geq m_{A,\ep}$.
Let $\ga=\tfrac12 + \tfrac{\eta}{2\omega}$ so that 
\[
\frac{\eta}{\omega}<\ga<1.
\]
We have
\begin{align}\label{ineq:tailofJKs}
\p{\J_{1,\K_s+P} \geq \ga m} \leq \p{\J_{1,\K_s} \geq (\ga-c)m} + \p{\J_{\K_s+1,\K_s+P} \geq c m} \ls_P e^{-cm},
\end{align}
where for the first term in the second inequality we have used \eqref{ineq:decayfirsttime2} with the upper bound of $s$ in \eqref{ineq:sbounds}, and in the  second term we use the tail bound \eqref{ineq:jansontail} for $\J\equiv \G(\la)$. Since the conditional expectation under the event $\J_{1,\K_s+P} \geq \ga m$ of the left hand side in \eqref{ineq:Qgoal2} is $O(m^{A}e^{-cm})$, we can condition the left hand side of \eqref{ineq:Qgoal2} to the event $\J_{1,\K_s+P} < \ga m$. However, since in this situation we have $(m-\J_{1,\K_s+P})_+^{-A} \leq (1-\ga)^{-A}m^{-A}$ we conclude that \eqref{ineq:Qgoal2} will follow if we show
\[
\e{\exp\left({-\ep\sum_{k=0}^P \1_{\overline{\B}}((n-m,l)+\X_{1,\K_s+k})}\right)} \leq (1-\ga)^{A+1}
\]
for large $m$, which in turn is implied (by conditional expectation\footnote{We are using the identity $\E[\Y]=\E[\Y|E]\p{E}+\E[Y|E']\p{E'}$ when $\Y$ is a random variable and $E$ is an event with complement $E'$.}) by
\begin{align}\label{ineq:Qgoal3}
\p{\sum_{k=1}^P \1_{\overline{\B}}((n-m,l)+\X_{1,\K_s+k}) \leq \frac{A \lambda}{\ep}} \leq \ga(1-\ga)^{A+1}
\end{align}
for $m\geq m_{A,\ep}$ (and small $\ep$), where $B=4\log(1/(1-\ga))$.
\medskip

\noindent {\bf Step 3.}
We now state two lemmas and we postpone their proofs for the next steps.

\begin{lem}\label{lem:largetriangles}
Assume case (III). Let $k,s'\in \N$ and define the event
\[
E_{k,s'}:= (n-m,l)+\X_{1,\K_{s}+k} \in \D' \,\wedge\, s_{\D'}\geq s',
\]
for some $\D'\subset \B$, where $s=l_\D-l\geq \tfrac{m}{\log^2 m}$. Then if $1\leq s' \leq m^{1/3}$  and $m\geq m_{A}$ we have\footnote{The exponent $1/3$ could be replaced by anything less than $1/2$.}
\[
\p{E_{k,s'}} \ls A^2 \frac{k}{s'} + e^{-cA^2k}.
\]
\end{lem}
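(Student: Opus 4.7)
The plan is to reduce the bound on $\p{E_{k,s'}}$ to a union bound over candidate big triangles $\D' \subset \B$ with $s_{\D'} \geq s'$, combined with a sharp pointwise estimate on the distribution of the walk position at the stopped time $\K_s + k$.

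First, I would exploit the strong Markov property at the stopping time $\K_s$ to decompose
\[
\X_{1,\K_s+k} = \X_{1,\K_s} + \X'_{1,k},
\]
where $\X'_{1,k}$ is an independent copy of $\X_{1,k}$. The distribution of $\X_{1,\K_s}$ is controlled by \eqref{ineq:decayfirsttime}, which concentrates it near $(s/\omega, s)$ with sub-exponential tails; the distribution of $\X'_{1,k}$ is controlled by the Chernoff-type local-limit bound \eqref{ineq:decayX}, which gives pointwise density $\ls 1/k$ and sub-Gaussian concentration on scale $\sqrt{k}$ around $k\,\E[\X]$. Convolving these two density bounds yields
\[
\p{\X_{1,\K_s+k} = (j,l')} \ls (s+k)^{-1} G_{s+k}\!\left(c\bigl((j,l') - \E[\X_{1,\K_s+k}]\bigr)\right)
\]
for lattice points $(j,l')$, where $G_N$ is the Gaussian-plus-exponential envelope from \eqref{ineq:decayX}.

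Next, I would split $\p{E_{k,s'}}$ according to whether the walk lies inside a bulk box of radius $\sim A^2 k$ around the mean or in the exponential tail. The tail contribution is bounded using the $e^{-|y|}$ piece of $G_{s+k}$, giving exactly the $e^{-cA^2 k}$ term. In the bulk, I would bound the probability of landing in a single triangle $\D'$ crudely by $O(1)$ (via an integral of the local density), and then count the number of triangles of size $\geq s'$ which can intersect the bulk box. Here the crucial inputs are conditions (ii) and (iii) of \thmref{thm:renewal}: because the drift slope $\omega$ strictly exceeds the triangle slope $\eta$, two distinct triangles of size $\geq s'$ intersecting the bulk box must be separated along the drift direction by at least $\gtrsim s'$, so that at most $O(A^2 k / s')$ of them can fit in a tube of length $A^2 k$. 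The separation $\gtrsim \log(1/\ep)$ between triangles in $\B$ then prevents any overcounting. Summing over the bulk triangles yields the first term $O(A^2 k / s')$.

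The main obstacle is the geometric counting step: one must convert the size hypothesis $s_{\D'} \geq s'$ together with the drift condition $\omega > \eta$ into an effective one-dimensional separation $\gtrsim s'$ between distinct big triangles along the direction transverse to the drift, so that only $O(A^2 k / s')$ of them can populate the bulk region. The rest of the argument is a routine assembly: convolve the two density bounds, restrict to the Gaussian bulk, and union-bound over the counted triangles.
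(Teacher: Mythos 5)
Your overall architecture (stop at $\K_s$, take $k$ more steps, split bulk/tail, count large triangles that are $\gtrsim s'$-separated) matches the paper's, but two steps as written would fail. First, bounding the probability of landing in a single triangle by $O(1)$ and multiplying by the count $O(A^2k/s')$ does not yield $\ls A^2k/s'$: in the regime $s'\gg A^2k$ (the only nontrivial one, since otherwise $A^2k/s'\gtrsim 1$ and there is nothing to prove) the count is an integer that can equal $1$, and $1\times O(1)=O(1)$, which is not $O(A^2k/s')$. The paper instead proves a \emph{per-triangle} bound of order $A^2k\cdot s^{-1/2}\leq A^2k/s'$, and this needs two ingredients absent from your proposal: the local bound $\p{\J_{1,\K_s+k}=j'}\ls s^{-1/2}$ coming from \eqref{ineq:decayfirsttime}, and the geometric claim that if the walk sits in a triangle $\D'$ with $s_{\D'}\geq s'$ at height $\leq l_\D+A^2k$, then its horizontal coordinate satisfies $n-m+\J_{1,\K_s+k}=j_{\D'}+O(A^2k)$ (otherwise the point $(j'-A^2k,\,l_\D)$ would lie in both $\D$ and $\D'$, contradicting disjointness). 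It is this localization to a window of width $O(A^2k)$ around the apex of each candidate triangle, combined with the $s^{-1/2}$ local density, that produces the factor $A^2k/s'$ even when only one triangle is in range.

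Second, your bulk/tail split --- a box of radius $\sim A^2k$ around the mean, with tail $e^{-cA^2k}$ --- is not valid in the horizontal direction: $\J_{1,\K_s+k}$ has Gaussian fluctuations of order $\sqrt{s}\approx\sqrt{m}/\log m\gg A^2k$, so the probability of leaving such a box is close to $1$, and the $e^{-y\cdot y/(s+k)}$ piece of your envelope does not give $e^{-cA^2k}$ at that distance. Only the vertical coordinate concentrates at scale $A^2k$ above $l_\D$, via the overshoot factor $e^{-c(l-s)}$ in \eqref{ineq:decayfirsttime} plus $k$ further exponential-tailed increments; your symmetric convolution envelope $(s+k)^{-1}G_{s+k}$ loses this asymmetry. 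The paper confines the horizontal coordinate only to a window of radius $s^{2/3}$, with tail $\ls e^{-cs^{1/3}}$, and absorbs that tail into $A^2k/s'$ using precisely the hypotheses $s\geq m/\log^2 m$ and $s'\leq m^{1/3}$ --- hypotheses your proposal never invokes, which signals that the error-budget accounting is off.
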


We now recursively define a sequence of stopping times
\[
(\T_1(j,l),\T_2(j,l),\ldots, \T_\I(j,l))
\]
for a given $(j,l)\in \N^2$ such that
\[
\T_i(j,l)=\min\{k\in \N: (j,l)+\X_{1,k}(j,l) \in \B \,\wedge\, l+\L_{1,k}>l_{\D_{i-1}}\}
\] 
and select a triangle $\D_i=\D(j_{\D_i},l_{\D_i},s_{\D_i})\subset \B$ uniquely such that $(j,l)+\X_{1,i}(j,l)\in \D_i$ (with the convention that $l_{\D_0}=0$). In other words, the stopping times $\T_{i}(j,l)$ are the first instants where the random walk $(j,l)+\X_{1,k}$ enters a new triangle of $\B$ that lies strictly above the previous triangle. Above $\I=\I(j,l)$ is the total number of stopping times that can be constructed in this way. If $\T_1(j,l)$ does not exist then we set $\I(j,l)=0$ (and conventionally $\T_0 = 0$).  We claim that if the random walk $(j,l)+\X_{1,k}$ spends a lot of time in $\B$, then it must also spend some significant time outside $\B$.

\begin{lem}\label{lem:manytriangles}
If $\ep$ is sufficiently small we have (omitting the variables $(j,l)$)
\begin{align}\label{ineq:manytriangles}
\e{\exp\left(\ep \I_a-\sum_{k=1}^{\T_{\I_a}}\1_{\overline{\B}}((j,l)+\X_{1,k} ) \right)} \leq e^\ep
\end{align}
for all $(j,l)\in\N^2$ and  $a\in \N$, where $\I_a=\min(a,\I)$.
\end{lem}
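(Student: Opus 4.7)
My plan is to prove \lemref{lem:manytriangles} by induction on $a$, reducing the full exponential moment to a one-excursion supermartingale estimate that in turn follows from the triangle separation hypothesis. Setting $\Phi_a:=\exp(\ep\I_a-\sum_{k=1}^{\T_{\I_a}}\1_{\overline\B}((j,l)+\X_{1,k}))$, one has $\Phi_0=1$, and a short case analysis distinguishing $\I<a-1$, $\I=a-1$ and $\I\geq a$ shows that $\Phi_a=\Phi_{a-1}\cdot\exp((\ep-D_a)\1_{\I\geq a})$, where $D_a:=\sum_{k=\T_{a-1}+1}^{\T_a}\1_{\overline\B}((j,l)+\X_{1,k})$. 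Taking $\mathcal G_{a-1}$ to be the $\sigma$-algebra generated by the walk stopped at $\T_{a-1}\wedge\T_\I$, the induction reduces to the conditional supermartingale bound $\E[\exp((\ep-D_a)\1_{\I\geq a})\mid\mathcal G_{a-1}]\leq 1$ on $\{\I\geq a-1\}$ (the factor is trivially $1$ on $\{\I<a-1\}$), which yields $\E\Phi_a\leq\E\Phi_0=1\leq e^{\ep}$ upon iteration.

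For the one-step bound, I apply the strong Markov property at $\T_{a-1}$. On $\{\I\geq a-1\}$, writing $p=(j,l)+\X_{1,\T_{a-1}}\in\D_{a-1}$ and taking a fresh i.i.d.\ copy $(\X'_k)$ of the increments independent of $\mathcal G_{a-1}$, let $E$ be the event that $p+\X'_{1,k}$ ever enters $\B_+:=\bigcup\{\D'\subset\B:l_{\D'}>l_{\D_{a-1}}\}$, and let $D$ denote the number of $\overline\B$-positions visited by the fresh walk before the first such hit. The conditional expectation equals $\p{E^c}+e^{\ep}\E[e^{-D}\1_E]$, so it suffices to show $\E[e^{-D}\mid E]\leq e^{-\ep}$ whenever $\p{E}>0$. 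Splitting $\E[e^{-D}\mid E]\leq e^{-1}+(1-e^{-1})\p{D=0\mid E}$, this reduces to proving $\p{D=0\mid E}\leq 1-c\ep$. The analytic heart of the matter is hypothesis~(ii): any inter-triangle jump of the walker requires an increment of $\ell^1$-norm at least $c\log(1/\ep)$, which by the exponential tail $\p{|\X|>t}\ls e^{-ct}$ has probability at most $\ep^{c}$. A renewal argument, controlling the (geometrically distributed, by the drift $\omega>\eta$) number of returns of the walker to $\D_{a-1}$ before a permanent exit, upgrades this per-step bound to $\p{D=0,E}\ls\ep^{c}$, which combined with a lower bound on $\p{E}$ furnishes $\p{D=0\mid E}\ls \ep^{c/2}$.

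The main obstacle is the delicate regime $\p{E}\ls\ep^{c}$, in which the direct ratio $\p{D=0,E}/\p{E}$ need not be small. To handle it, I would couple the fresh walk with its stopped version at $\sigma$, the first time its $l$-coordinate exceeds $l_{\D_{a-1}}$; at $\sigma$ the walker's position is spread out by local-limit estimates of the form \eqref{ineq:decayfirsttime}--\eqref{ineq:lowerboundunif} used in the proof of \thmref{thm:renewal}, so that, conditional on $E$, the walker lands in $\overline\B$ with probability bounded below by an absolute constant (owing to the separation gap together with the drift hypothesis $\omega>\eta$ exactly as in Case~(II) of that proof). Applying the strong Markov property at $\sigma$ then yields $\p{D\geq 1\mid E}\gtrsim 1$, which easily dominates $c\ep$ for small $\ep$ and closes the induction.
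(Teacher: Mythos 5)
Your overall architecture (induction on $a$, strong Markov property at a stopping time, and a ``with decent probability the walker visits $\overline{\B}$ once per triangle'' input) matches the paper's, and your telescoping identity $\Phi_a=\Phi_{a-1}\exp((\ep-D_a)\1_{\I\geq a})$ is correct. But the specific way you cut the trajectory creates a gap at the heart of the argument. Your one-step bound requires $\e{e^{-D}\mid E}\leq e^{-\ep}$, where $E=\{\I\geq a\}$ is the \emph{future} event that the walk eventually enters a triangle above $\D_{a-1}$, and $D$ counts the $\overline{\B}$-visits on the way there. Conditioning on $E$ biases the walk precisely toward trajectories that pass from triangle to triangle without touching $\overline{\B}$: the estimates \eqref{ineq:decayfirsttime}--\eqref{ineq:lowerboundunif} and the Case (II) computation give only the \emph{unconditional} bound $\p{\text{pos at }\sigma\in\overline{\B}}\gtrsim 1$, and they say nothing about $\p{\text{pos at }\sigma\in\overline{\B}\mid E}$. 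Concretely, if the only triangle above $l_{\D_{a-1}}$ is a small one $\D'$ at distance $\asymp\log(1/\ep)$ from $\D_{a-1}$ and placed off the drift line from the natural exit point, then $E$ is realized essentially only by a direct inter-triangle jump (probability $\asymp\ep^{cC}$), on which $D=0$, while the competing event $\{D\geq 1\}\wedge E$ (exit into $\overline{\B}$, then find $\D'$) can be far smaller; the ratio $\p{D\geq1\mid E}$ is then not bounded below by any function of $\ep$ alone, and your renewal/coupling sketch does not control it. This is not a technicality: it is exactly the estimate that makes the lemma nontrivial, and your proposal asserts it rather than proves it.

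The paper sidesteps this by peeling off the \emph{first} excursion rather than the last increment: writing $\T_*$ for the first time the height exceeds $l_{\D_1}$, it decomposes
\begin{align}
e^{\ep}Z((j,l),a)=\p{\I=0}+\e{\1_{\I\neq0}\,e^{\ep-\sum_{k\leq\T_*}\1_{\overline{\B}}((j,l)+\X_{1,k})}\,Z((j,l)+\X_{1,\T_*},a-1)},
\end{align}
applies the induction hypothesis to the restarted walk, and is left needing only
\begin{align}
\p{\I\neq0\wedge(j,l)+\X_{1,\T_*}\notin\B}\gtrsim\p{\I\neq0},
\end{align}
i.e.\ a bound conditioned on the \emph{past} event of having entered $\D_1$. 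That bound does follow from \eqref{ineq:lowerboundunif} and the separation hypothesis exactly as in \eqref{ineq:lowerboundnotinB}, with no reference to whether the walk ever finds another triangle. In accounting terms: the paper pays the $\ep$-cost of each triangle with the (unconditional, constant-probability) $\overline{\B}$-visit incurred upon \emph{exiting} that triangle, whereas you try to pay the cost of $\D_a$ with the visits incurred while \emph{approaching} it, conditioned on arrival. To repair your proof you would need either to reorganize the induction as the paper does, or to genuinely prove the future-conditioned estimate $\p{D\geq1\mid E}\gtrsim\ep$, which your current argument does not do.
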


We now finish the proof of Theorem \ref{thm:renewal} by showing \eqref{ineq:Qgoal3}. We first apply Lemma \ref{lem:largetriangles} to obtain that the event\footnote{The exponent $1/7$ could be anything smaller than $1/3$.} 
\[
E = \bigvee_{1\leq k \leq m^{1/7}} E_{k, \lceil k^3(1-\ga)^{-2A} \rceil}
\]
has small probability $\p{E}\ls A^2 (1-\ga)^{2A}$.  Without loss of generality we can assume $A\geq A_{\ga}$ to obtain 
\[
\p{E} \leq \tfrac{\ga}{2}(1-\ga)^{A+1}.
\]
Similarly define the event 
\[
F:=\left[\exp\left(\ep \I_a-\sum_{k=1}^{\T_{\I_a}}\1_{\overline{\B}}((n-m,l)+\X_{1,\K_s+k} ) \right) \geq e^{-\ep}\tfrac{2}{\ga}(1-\ga)^{-A-1}\right],
\]
where $a=\lceil A^2/\ep^2\rceil$, $({\T}_i)=({\T}_i(j',l'))$, $\I_a=\I_a(j',l')$ and $(j',l')=(n-m,l)+\X_{1,\K_s}$. Lemma \ref{lem:manytriangles} and Markov's inequality implies that 
\[
\p{F} \leq \tfrac{\ga}{2}(1-\ga)^{A+1}.
\]
To prove \eqref{ineq:Qgoal3} it will now suffice to show the deterministic claim
\begin{align}\label{ineq:goalfinal}
\text{outside } F  \text{ and } E \quad \Rightarrow \quad \sum_{k=1}^P \1_{\overline{\B}}((n-m,l)+\X_{1,\K_s+k}) > \frac{AB}{\ep}.
\end{align}
Note first that if we are outside $F$ and $\I_a= a$ then (by taking the $\log$)
\begin{align}\label{ineq:counterbalance}
\sum_{k=1}^{\T_{a}}\1_{\overline{\B}}((n-m,l)+\X_{1,\K_s+k} ) \geq  \frac{A^2}{\ep} + O(A)
\end{align}
for large $A$ independent of $\ep$. Assume by contradiction that \eqref{ineq:goalfinal} is false, that is, 
\begin{align}\label{ineq:contraassump}
\sum_{k=1}^P \1_{\overline{\B}}((n-m,l)+\X_{1,\K_s+k}) \leq  \frac{AB}{\ep}
\end{align}
with positive probability outside $F$ and $E$. Let $(j',l')=(n-m,l) + \X_{1,\K_s}$. If $P>B A/\ep$ then there is a first $k\leq P$ such that $(n-m,l) + \X_{1,\K_s+k}\in \D_1=\D(j_{\D_1},l_{\D_1},s_{\D_1})$, hence $\T_1(j',l')$ exists. Since we are outside $E$ we must have $s_{\D_1}< 4^A k^3$, and since $(n-m+\J_{1,\K_s+k}-j_{\D_1}) + \eta(\l_{\D_1}-l-\L_{1,\K_s+k})\leq s_{\D_1} < 4^A k^3$ we have $l+\L_{1,\K_s+k'}>\l_{\D_1}$ for $k' \geq k+4^Ak^3/\eta$, hence $(n-m,l) + \X_{1,\K_s+k'}$ exits $\D_1$. If $P > B A/\ep + k+4^Ak^3/\eta$ then by \eqref{ineq:contraassump} there is a first $k' \leq B A/\ep + k+4^Ak^3/\eta$ such that $(n-m,l)+\X_{1,\K_s+k'} \in \D_2$. We deduce $B A/\ep + k+4^Ak^3/\eta \geq k'=\T_2(j',l') \geq k+4^Ak^3/\eta$. We can keep this iterative procedure and conclude that if $P\geq P_{A,\ep}$ then $\T_a(j',l')$ exists, $\T_a(j',l')\leq P$ and $\I(j',l')\geq a$. However, \eqref{ineq:contraassump} and \eqref{ineq:counterbalance} together imply that $AB/\ep \geq A^2/\ep+O(A)$, which is absurd if $A$ is sufficiently large.
\medskip

\noindent {\bf Step 4.} We now prove Lemma \ref{lem:largetriangles}. We can assume $s'\gtrsim A^2 k $ otherwise there is nothing to prove. Similarly to \eqref{ineq:tailofJKs}, we can use the same splitting to show
\[
\p{l+\L_{1,\K_s+k} \geq l_\D + A^2k} \ls e^{-cA^2k}
\]
Using the bounds \eqref{ineq:decayfirsttime2} and  \eqref{ineq:decayX} is also not hard to deduce that
\begin{align*}
\p{|\J_{1,\K_s+k}-\tfrac{s}{\omega}|  \geq s^{2/3}}  &\leq \p{|\J_{1,\K_s}-\tfrac{s}{\omega}|\geq \tfrac12s^{2/3}} + \p{\J_{\K_s+1,\K_s+k}\geq \tfrac12s^{2/3}} \\ 
& \ls e^{-c s^{1/3}} + e^{-c s^{2/3}} \\
& \ls A^2k/s'
\end{align*}
where in the last inequality we have used that $s\geq m/\log^2 m$ and $s' \leq m^{1/3}$. We obtain
\[
\p{E_{k,s'}} \ls A^2k/s' + e^{-cA^2k} + \p{E_{k,s'} \cap E_*},
\]
where $E_*= (l+\L_{1,\K_s+k} < l_\D + A^2k) \wedge (|\J_{1,\K_s+k}-\tfrac{s}{\omega}|  < s^{2/3})$. Let $J_*\subset \N$ denote the set of $j_{\D'}$ such that under the event $E_*$ the random walk $(n-m,l)+\X_{1,\K_s+k}$ encounters a triangle $\D'=\D(j_{\D'},l_{\D'},s_{\D'})$ (necessarily different from $\D$ since $s=l_\D-l$) and with $s_{\D'}\geq s'$. We claim that $J_*$ is a $c s'$-separated set and that $n-m+\J_{1,\K_s+k}=j_{\D'}+O(A^2k)$. If the claim is true then
\begin{align*}
\p{E_{k,s'} \wedge E_*} & \leq \sum_{\substack{j_{\D'} \in \J_* \\ |j'-  j_{\D'}|\leq A^2k/c}} \p{n-m+\J_{1,\K_s+k} = j'}\\
& \ls s^{-1/2}\sum_{\substack{j_{\D'} \in \J_* \\ |j'-  j_{\D'}|\leq A^2k/c}} G_{s+1}(c(j'-n+m-s/\omega)) \\
& \ls  \frac{A^2 k}{s'}
\end{align*}
where in the second inequality we applied \eqref{ineq:decayfirsttime}, and this would finish the proof of the lemma. 

To prove the remaining claim note first that if under $E_*$ we have $(j',l')=(n-m,l)+\X_{1,\K_s+k} \in \D'$ with $s_{\D'} \geq s'$, then $l'< l_{\D}+A^2k$ and $j'=n-m+s/\omega+O(s^{2/3})$. By the definition of $\D'$ we obtain
\begin{align*}
s_{\D'} \geq l_{\D'}-l'+  \eta(j' -j_{\D'}) > l_{\D'}-l_\D+  \eta(j' - A^2k/\eta-j_{\D'}).
\end{align*}
Assume now that $j'\geq j_{\D'}+A^2k$. We can then let $j''=j'-A^2k$ and conclude that $(j'',l_\D)\in \D'$. However, since $(n-m,l)\in \D$ and $s=l_\D-l$ we have
\begin{align*}
\eta(j''-j_\D) & = \eta(n-m-j_\D) + \eta(j''-(n-m))  \\
& < s_{\D}-s + \eta(j'-(n-m)) \\
& = s_{\D}-(1-\eta/\omega)s +O(s^{2/3})\\
& \leq s_{\D}
\end{align*}
for $m$ sufficiently large, where in the last inequality we have used that $\eta<\omega$ and $s\geq m/\log^2 m$. We conclude $(j'',l_\D)\in \D$, which is absurd since $\D'$ is different from $\D$. We deduce that $j'< j_{\D'}+A^2k$. To finish it remains to show that $J_*$ is $cs'$-separated. Note first that that since $(j',l')\in\D'$ then $l_{\D'}-s_{\D'} \leq l' \leq l_\D+A^2k$. Suppose by contradiction that $l_{\D'}<l_\D+s_{\D'}-\eta Ak^2$. Then $\eta(j'-j_{\D'})+l_{\D'}-l_\D < s_{\D'}$ and so $(j',l_\D)\in \D'$. On the other hand we have 
\[
\eta(j'-j_\D) =  \eta(n-m-j_\D)+\eta s/\omega+O(s^{2/3}) \leq s_\D+ (\eta /\omega-1)s+O(s^{2/3}) \leq s_\D
\]
for large $m$, and so $(j',l_\D)\in \D$, which is absurd.  We deduce that 
\begin{align}\label{ineq:lDprimebounds}
l_\D+s_{\D'}-\eta A^2k \leq l_{\D'} \leq l_\D+s_{\D'}+ A^2k
\end{align}
Let now $\D',\D''$ be two distinct triangles with $s_{\D'},s_{\D''} \geq s'$ such that under the event $E_*$ the random walk $(j'.l')=(n-m,l)+\X_{1,\K_s+k}$ encounters (with non-zero probability). We can assume $j_{\D''}\geq j_{\D'}$. Let $l_*=l_\D+\lfloor s'/2 \rfloor$ and note that because of \eqref{ineq:lDprimebounds} we have $l_{\D'}-s_{\D'} \leq l_* \leq l_{\D'}$, and the for $\D''$, that is, $\l_{\D''}-s_{\D''} \leq l_* \leq l_{\D''}$. Since $\D'$ and $\D''$ are distinct the sets $\{j_{\D'},\ldots,j_{\D'}+(s_{\D'}+l_*-l_\D')/\eta\}$ and $\{j_{\D''},\ldots,j_{\D''}+(s_{\D''}+l_*-l_\D'')/\eta\}$ must be disjoint and so
\[
j_{\D''} \geq j_{\D'}+(s_{\D'}+l_*-l_\D')/\eta \geq j_{\D'} + s'/2 + O(A^2k) \gtrsim j_{\D'}+ s'.
\]
This finishes the proof of the lemma.
\medskip

\noindent {\bf Step 5.} We now prove Lemma \ref{lem:manytriangles} by induction on $a$. If $a=1$ there is nothing to show. Let $Z((j,l),a)$ be the left hand side in \eqref{ineq:manytriangles} and assume $Z((j,l),a-1) \leq e^{\ep}$ for some $a\geq 2$ and all $(j,l)\in \N^2$.  Let $\T_*=\T_*(j,l)$ be the first time where $l+\L_{1,\T_*}>\l_{\D_1}$ and note that if $\I\neq 0$ then $\T_1<\T_*\leq \T_2$. Hence $\I(j,l)-1=\I((j,l)+\X_{1,\T_*})$ and so $\I_a(j,l)-1=\I_{a-1}((j,l)+\X_{1,\T_*})$. We obtain
\begin{align*}
& \e{\exp\left(\ep \I_{a}-\sum_{k=1}^{\T_{\I_{a}}}\1_{\overline{\B}}((j,l)+\X_{1,k} )+\ep\right)} \\ 
& = \p{\I=0} + \p{\I\neq 0}\e{ \exp\left(-\sum_{k=1}^{\T_*}\1_{\overline{\B}}((j,l)+\X_{1,k})+\ep  \right) Z((j,l)+\X_{1,\T_*},a-1)}\\
&\leq \p{\I=0} + \p{\I\neq 0}e^{2\ep}\e{e^{-\1_{\overline{\B}}((j,l)+\X_{1,\T_*})}},
\end{align*}
where in the last inequality we have used the induction hypothesis. We then have to show that $\e{e^{-\1_{\overline{\B}}((j,l)+\X_{1,\T_*})}} \leq \p{\T_*\neq 0}e^{-\ep}$, which is implied by 
\[
\p{\T_* \neq 0 \wedge (j,l) + \X_{a,\T_*} \notin \B} \gtrsim \, \p{\T_*\neq 0}
\]
and selecting $\ep$ sufficiently small. However, an argument analogous to the one employed to show \eqref{ineq:lowerboundnotinB} can be used here to finish the the proof. \hfill \qed

\section{Numerics}\label{numerics}

In this section we explore Conjecture \ref{conj:main} numerically. Recall Definition \ref{def:CandS} and the conditions stated by Theorem \ref{thm:main}:
\begin{enumerate}
\item[(a)] $p$ and $q$ are coprime; 
\item[(b)] $q^{p-1}<p^{{p}}$; 
\item[(c)] $qj+r(j) \equiv 0 \ ({\rm mod}\ p)$.
\end{enumerate}
We would like to investigate the logical relations between conditions (a), (b) and (c) and the statement: 
\begin{enumerate}
\item[(d)] There exists $B>0$ such that for every $N$ there exists $k\geq 0$ such that $C^k(N)\leq B$. 
\end{enumerate}
Note that an equivalent formulation of (d) is that $C$ has only finitely many cycles and that every orbit of $C$ eventually enters in one of these cycles.

\subsection{(d) appears to not imply (a)} Consider the map $C$ with $p=4$, $q=6$ and $r=[26, 20, 18]$. It satisfies conditions (b) and (c) but obviously does not satisfy (a), nevertheless it seems that $C$ has only $3$ cycles
\begin{align*}
& [2, 32, 8] \\ & [38, 248, 62, 392, 98, 608, 152] \\ & [119, 732, 183, 1116, 279, 1692, 423, 2556, 639, 3852,  963, 5796, 1449, 8720, 2180, 545, 3296, \\ & \, \, 824, 206, 1256, 314, 1904, 476]
\end{align*}
and every orbit eventually enters in one of these loops. Indeed, numerically we have confirmed that for all $N\leq 10^{8}$ we have $C^k(N)\in \{2,38,119\}$ for some $k$. On the other hand, the map $C$ with $p=4$, $q=6$ and $r=[-2, -8, -14]$ seems to contain mostly unbounded orbits. Indeed, it seems that the set of inputs with bounded orbits is exactly in $\{N 4^k : 1\leq N \leq 8, k\geq 0\}$ (which has density zero) and that the first unbounded orbit starts with $N=9$, because $C^{10^4}(9)$ has $882$ digits! We found numerically many other examples of maps that seem to exhibit such dichotomy, another one being $p=12$, $q=15$ with $r=[69, 42, 27, -24, 69, 54, -9, 36, -15, -6, -21]$ or $r'=[9, 6, 3, 0, 9, 6, 3, 0, 9, 6, 3]$. The first map $C$ with vector $r$ appears to only have bounded orbits and only two cycles with minimal elements $1$ and $226$ respectively, while for the second map $C$ with $r'$, around $66\%$ of the numbers $N\leq 10^5$ satisfy the property that $C^k(N)>10^{50}$ for some $k\leq 10^4$. Moreover, $N=51$ seems to be the first with an unbounded orbit because $C^{10^5}(51)$ has $1055$ digits!

\subsection{(d) does not imply (b)} Observe that if $p^2$ divides all the numbers $$\{qN+r(N \mod p) : N\in [p^2]\setminus p\N\}$$ then $C^3(N) \leq q N /p^2 + A$, for some $A$, and thus if $q<p^2$ we obtain that $C_{\min}(N) \leq \frac{AN}{1-qp^{-2}}$  for all $N\in \N$. One simple example is the map $C$ associated with $p=3$, $q=6$ and $r=[3,6]$. This map indeed violates (b), however it satisfies that $C^3(N) \leq (2/3)N + 6$ and so $C_{\min} (N) \leq 18$. Indeed, it is easy to show that all orbits eventually enter in one of the loops $[1, 9, 3]$ or $[2, 18, 6]$.

\subsection{(d) appears to not imply (c)} One example is the map $C$ with $p=4$, $q=6$ and $r=[2, 4, 11]$. This map does not satisfies (c) since $6\times 3 + 11=29$, which is not divisible by $4$. However it seems that all orbits are bounded and eventually enter in one of the loops $[1, 8, 2, 16, 4]$ or $[11, 77, 464, 116, 29, 176, 44]$.

\subsection{(a), (b), (c) appears to imply (d)}
We have performed a computer search for all $q< p^{p/(p-1)}$ with $2\leq p  \leq q \leq 10^2$ and with several randomly selected vectors $r$ such that (a), (b), (c) were satisfied. In all these maps we have searched for orbits starting with $N\leq 10^5$ and such that $C^k(N)\geq 10^{10}$ for some $k\leq 10^3$. We could not find a single map $C$ satisfying this condition. As numerical experiments point out, if a maps $C$ has an unbouded orbit, we usually can find one starting with a relatively small $N$, for instance $N\leq 10^5$ for small $p,q,r$. This gives some evidence towards Conjecture \ref{conj:main}.





\begin{thebibliography}{100}

\bibitem{A79}
J. Allouche,
\emph{Sur la conjecture de Syracuse-Kakutani-Collatz}
S\'eminaire de Th\'eorie des Nombres de Bordeaux, (1978--1979), pp. 1--15

\bibitem{BMS}
V. Bergelson, M. Misiurewicz and S. Senti, \emph{Affine Actions of a Free Semigroup on the Real Line}, Ergodic Theory and Dynamical Systems 26 (2006), 1285--1305.

\bibitem{C72}
{J. H. Conway}, 
\emph{Unpredictable iterations}, Number Theory Conference, University of Colorado, Boulder, 1972, 49--52 (1972).

\bibitem{C87}
{J. H. Conway}, 
\emph{FRACTRAN: A Simple Universal Computing Language for Arithmetic}. Open Prob. in Comm. and Comp. (T. M. Cover and B. Gopinath, Eds.) Springer-Verlag, New York 1987, 3--27.

\bibitem{G09}
A. Gut, 
\emph{Renewal Processes and Random Walks. Stopped Random Walks}, 
Springer Series in Operations Research and Financial Engineering. Springer, New York, NY (2009).

\bibitem{H79}
E. Heppner,  
\emph{Eine Bemerkung zum Hasse-Syracuse-Algorithmus,} Arch. Math. (Basel) 31 (1978/79),
317--320.

\bibitem{H74}
J. J. Hunter,
\emph{Renewal Theory in Two Dimensions: Basic Results},
Advances in Applied Probability 6 (1974), no. 2, 376--391.

\bibitem{J}
S. Janson,
\emph{Tail bounds for sums of geometric and exponential variables,}
Statistics \& Probability Letters 135 (2018), 1--6.

\bibitem{KL}
A. Kontorovich, J. Lagarias, \emph{Stochastic models for the $3x + 1$ and $5x + 1$ problems and related problems}, The ultimate challenge: the $3x + 1$ problem, 131--188, Amer. Math. Soc., Providence,
RI, 2010.

\bibitem{KM05} 
{A. Kontorovich and S. J. Miller},
\emph{Benford’s law, values of L-functions and the $3x + 1$} problem, Acta Arith. 120 (2005), no. 3, 269--297.


\bibitem{K94}
I. Korec, 
\emph{A density estimate for the $3x + 1$ problem}, 
Math. Slovaca 44 (1994), no. 1, 85--89.

\bibitem{KL03}
{I. Krasikov and J. Lagarias}, 
\emph{Bounds for the $3x + 1$ problem using difference inequalities}, 
Acta Arith. 09 (2003), 237–258.

\bibitem{L85}
{J. Lagarias}, 
\emph{The $3x+1$ problem and its generalizations}, 
Amer. Math. Monthly 92 (1985), no. 1,
3--23.

\bibitem{L10}
J. Lagarias,
\emph{The Ultimate Challenge: The $3x + 1$}, 
Problem. Amer. Math. Soc, Providence, RI 2010.

\bibitem{LS06}
{J. Lagarias and K. Soundararajan}, \emph{Benford’s law for the $3x + 1$ function}, 
J. London Math. Soc. (2) 74 (2006), no. 2, 289-303.



\bibitem{M10}
K. R. Matthews,
\emph{Generalized 3x+1 mappings: Markov chains and ergodic theory}, 
The ultimate challenge: the $3x+1$ problem, 79-103, Amer. Math. Soc., Providence, RI, 2010.



\bibitem{MW84}
K. Matthews and A.M. Watts. 
\emph{A generalization of Hasse's generalization of the Syracuse algorithm},
Acta Arithmetica 43 (1) (1984), 167--175.

\bibitem{M78}
{H. M\"oller,} 
\emph{Uber Hasses Verallgemeinerung des Syracuse-Algorithmus (Kakutanis Problem),} 
Acta Arith. 34 (1977/78), 219--226.

\bibitem{S1} 
R. P. Steiner, 
\emph{On the $Qx+1$ Problem, $Q$ odd}, Fibonacci Quarterly 19 (1981), 285--288. 

\bibitem{S2} 
R. P. Steiner, 
\emph{On the $Qx + 1$ Problem, Q odd II}, 
Fibonacci Quarterly 19 (1981), 293--296.

\bibitem{T} 
T. Tao,  
\emph{Almost all orbits of the collatz map attain almost bounded values}, 
Forum of Mathematics, Pi. Vol. 10. Cambridge University Press, 2022.

\bibitem{Te}
R. Terras, \emph{A stopping time problem on the positive integers}, Acta Arith. 30 (1976), 241--252.

\bibitem{V}
S. Volkov (2006), \emph{A probabilistic model for the $5k + 1$ problem and related problems}, Stochastic Processes and Applications 116 (2006), 662--674.



\end{thebibliography}
\end{document}